\newcommand\cprime\textquotesingle 
\tikzset{->-/.style={decoration={markings,mark=at position #1 with {\color{black}\arrow{>}}},postaction={decorate,very thick}}}
\tikzstyle{vertex}=[circle, draw, inner sep=0pt, minimum size=6pt]
\providecommand \@dotsep{5} \def\listtodoname{List of Todos} \def\listoftodos{\@starttoc{tdo}\listtodoname} \makeatother
\patchcmd{\@startsection}{\@afterindenttrue}{\@afterindentfalse}{}{}             
\patchcmd{\part}{\bfseries}{\bfseries\LARGE}{}{}
\patchcmd{\section}{\scshape}{\bfseries}{}{}\renewcommand{\@secnumfont}{\bfseries} 
\patchcmd{\@settitle}{\uppercasenonmath\@title}{\large}{}{}
\patchcmd{\@setauthors}{\MakeUppercase}{}{}{}
\theoremstyle{plain}
\newtheorem{thm}{Theorem}[section]
\newtheorem{cor}[thm]{Corollary}
\newtheorem{lemma}[thm]{Lemma}
\newtheorem{prop}[thm]{Proposition}
\newtheorem{thmA}{Theorem}
\newtheorem*{thm*}{Theorem}
\newtheorem*{lem*}{Lemma}
\theoremstyle{definition}
\newtheorem{df}[thm]{Definition}
\newtheorem{rem}[thm]{Remark}
\newtheorem{ex}[thm]{Example}
\newtheorem*{df*}{Definition}
\newtheorem*{ex*}{Example}
\newtheorem*{rem*}{Remark}
\DeclareRobustCommand{\gobblefour}[5]{}   
\DeclareFontFamily{OT1}{pzc}{}            
\DeclareFontShape{OT1}{pzc}{m}{it}{<-> s * [1.10] pzcmi7t}{}
\DeclareMathAlphabet{\mathpzc}{OT1}{pzc}{m}{it}
\DeclareSymbolFont{sfoperators}{OT1}{bch}{m}{n} \DeclareSymbolFontAlphabet{\mathsf}{sfoperators} \makeatletter\def\operator@font{\mathgroup\symsfoperators}\makeatother 
\DeclareSymbolFont{cmletters}{OML}{cmm}{m}{it}
\DeclareSymbolFont{cmsymbols}{OMS}{cmsy}{m}{n}
\DeclareSymbolFont{cmlargesymbols}{OMX}{cmex}{m}{n}
\DeclareMathSymbol{\myjmath}{\mathord}{cmletters}{"7C}     \let\jmath\myjmath 
\DeclareMathSymbol{\myamalg}{\mathbin}{cmsymbols}{"71}     
\DeclareMathSymbol{\mycoprod}{\mathop}{cmlargesymbols}{"60}
\DeclareMathSymbol{\myalpha}{\mathord}{cmletters}{"0B}     \let\alpha\myalpha 
\DeclareMathSymbol{\mybeta}{\mathord}{cmletters}{"0C}      \let\beta\mybeta
\DeclareMathSymbol{\mygamma}{\mathord}{cmletters}{"0D}     \let\gamma\mygamma
\DeclareMathSymbol{\mydelta}{\mathord}{cmletters}{"0E}     \let\delta\mydelta
\DeclareMathSymbol{\myepsilon}{\mathord}{cmletters}{"0F}   \let\epsilon\myepsilon
\DeclareMathSymbol{\myzeta}{\mathord}{cmletters}{"10}      \let\zeta\myzeta
\DeclareMathSymbol{\myeta}{\mathord}{cmletters}{"11}       \let\eta\myeta
\DeclareMathSymbol{\mytheta}{\mathord}{cmletters}{"12}     \let\theta\mytheta
\DeclareMathSymbol{\myiota}{\mathord}{cmletters}{"13}      \let\iota\myiota
\DeclareMathSymbol{\mykappa}{\mathord}{cmletters}{"14}     \let\kappa\mykappa
\DeclareMathSymbol{\mylambda}{\mathord}{cmletters}{"15}    \let\lambda\mylambda
\DeclareMathSymbol{\mymu}{\mathord}{cmletters}{"16}        \let\mu\mymu
\DeclareMathSymbol{\mynu}{\mathord}{cmletters}{"17}        \let\nu\mynu
\DeclareMathSymbol{\myxi}{\mathord}{cmletters}{"18}        \let\xi\myxi
\DeclareMathSymbol{\mypi}{\mathord}{cmletters}{"19}        \let\pi\mypi
\DeclareMathSymbol{\myrho}{\mathord}{cmletters}{"1A}       \let\rho\myrho
\DeclareMathSymbol{\mysigma}{\mathord}{cmletters}{"1B}     \let\sigma\mysigma
\DeclareMathSymbol{\mytau}{\mathord}{cmletters}{"1C}       \let\tau\mytau
\DeclareMathSymbol{\myupsilon}{\mathord}{cmletters}{"1D}   \let\upsilon\myupsilon
\DeclareMathSymbol{\myphi}{\mathord}{cmletters}{"1E}       \let\phi\myphi
\DeclareMathSymbol{\mychi}{\mathord}{cmletters}{"1F}       \let\chi\mychi
\DeclareMathSymbol{\mypsi}{\mathord}{cmletters}{"20}       \let\psi\mypsi
\DeclareMathSymbol{\myomega}{\mathord}{cmletters}{"21}     \let\omega\myomega
\DeclareMathSymbol{\myvarepsilon}{\mathord}{cmletters}{"22}\let\varepsilon\myvarepsilon
\DeclareMathSymbol{\myvartheta}{\mathord}{cmletters}{"23}  \let\vartheta\myvartheta
\DeclareMathSymbol{\myvarpi}{\mathord}{cmletters}{"24}     \let\varpi\myvarpi
\DeclareMathSymbol{\myvarrho}{\mathord}{cmletters}{"25}    \let\varrho\myvarrho
\DeclareMathSymbol{\myvarsigma}{\mathord}{cmletters}{"26}  \let\varsigma\myvarsigma
\DeclareMathSymbol{\myvarphi}{\mathord}{cmletters}{"27}    \let\varphi\myvarphi
\DeclareMathOperator{\Hom}{Hom}
\DeclareMathOperator{\Aut}{Aut}
\DeclareMathOperator{\GL}{GL}
\DeclareMathOperator{\PGL}{PGL}
\DeclareMathOperator{\supp}{supp}
\DeclareMathOperator{\rk}{rk}
\DeclareMathOperator{\Proj}{Proj}
\DeclareMathOperator{\Gr}{Gr}
\DeclareMathOperator{\Coh}{{Coh}}
\DeclareMathOperator{\Bun}{{Bun}}
\DeclareMathOperator{\PBun}{{\mathbb{P}Bun}}
\DeclareMathOperator{\Pic}{{Pic}}
\DeclareMathOperator{\Ext}{{Ext}}
\DeclareMathOperator{\Iso}{{Iso}}
\newcommand\A{{\mathbb A}}
\newcommand\C{{\mathbb C}}
\newcommand\FF{{\mathbb F}}
\newcommand\F{{\mathcal F}}
\newcommand\G{{\mathcal G}}
\newcommand\N{{\mathbb N}}
\renewcommand\P{{\mathbb P}}
\newcommand\Z{{\mathbb Z}}
\newcommand\cA{{\mathcal A}}
\newcommand\cE{{\mathcal E}}
\newcommand\cF{{\mathcal F}}
\newcommand\cG{{\mathcal G}}
\newcommand\cH{{\mathcal H}}
\newcommand\cK{{\mathcal K}}
\newcommand\cL{{\mathcal L}}
\newcommand\cO{{\mathcal O}}
\newcommand{\E}{\mathcal E}
\newcommand{\Line}{\mathcal L}
\newcommand{\Fq}{\mathbb{F}_q}
\DeclareRobustCommand\bigop[1]{%
  \mathop{\vphantom{\sum}\mathpalette\bigop@{#1}}\slimits@
}
\newcommand{\bigop@}[2]{%
  \vcenter{%
    \sbox\z@{$#1\sum$}%
    \hbox{\resizebox{\ifx#1.9\fi\dimexpr\ht\z@+\dp\z@}{!}{$\m@th#2$}}%
  }%
}
\newcommand{\hprod}{\DOTSB\bigop{*}}
\renewcommand\geq{\geqslant}
\renewcommand\leq{\leqslant}
\renewcommand\emptyset\varnothing
\title{Hall algebras and Hecke modifications of vector bundles}
\author{Roberto Alvarenga}
\address{\rm Roberto Alvarenga, São Paulo State University (UNESP), São José do Rio Preto, Brazil}
\email{{roberto.alvarenga@unesp.br}}
\author{Leonardo Moço}
\address{\rm  Instituto de Ci\^encias Matem\'aticas e de Computa\c{c}\~ao - USP, S\~ao Carlos, Brazil}
\email{leonardo.moco@icmc.usp.br}
\begin{document}

\begin{abstract}
  In this article, we investigate Hecke modifications of vector bundles on a smooth projective curve $X$ defined over an arbitrary field. We obtain structural results that allow us to reduce the classification problem of Hecke modifications to the case of vector bundles of lower rank. Moreover, when the base field is a finite field and $X$ is the projective line, we apply the Hall algebra of coherent sheaves to provide a full classification of the Hecke modifications, including their multiplicities. These results are applied to study the space of unramified automorphic forms for $\PGL_n$ over the projective line, leading to a proof that the space of unramified toroidal automorphic forms is trivial.
\end{abstract}

\maketitle

\tableofcontents
\section{Introduction}

Let $X$ be a smooth projective curve defined over an arbitrary field $k$. Let $D$ be an effective divisor on $X$. Given $\E, \E'$ two vector bundles (locally free sheaves) of the same rank over $X$, roughly speaking, $\E'$ is a Hecke modification of $\E$  
at $D$ if $\E'$ is contained in $\E$ (as a locally free sheaf) with $\E/\E'$ isomorphic to the structural sheaf at $D$. 

Hecke modifications of vector bundles have been investigated, at least implicitly, since Weil \cite{weil-38}. It has been played a key role on both algebraic geometry and number theory and is also known as ``elementary transformations'' or ``Hecke transform''. The name ``Hecke'' comes from its connection with number theory, as it is related to the action of Hecke operators on the space of automorphic forms, see \cite{harder-67}.

In algebraic geometry, Hecke modifications have been used as an important tool for decades, see e.g.\ \cite{narasimhan-ramanan-78}. In \cite{mehta-seshadri-80}, a connection between Hecke modifications and the parabolic structure of a given vector bundle is established. This connection continues to be explored, as is evident in recent works such as \cite{alfaya-gomez-21} and \cite{he-walpuski-19}. 
In \cite{Araujo2021automorphisms}, the authors show that the admissible Hecke modifications generate the automorphism group of the moduli space of semistable parabolic bundles of rank $2$ over $\P^1 $ with trivial determinant. In \cite{HeuLoray-19}, the authors apply the Hecke modifications to study parabolic bundles with logarithmic connections. 
In \cite{boozer-21}, the author explicitly computes the Hecke modifications of rank $2$ vector bundles when $X$ is either the projective line or an elliptic curve. In this setting, he constructs a canonical open embedding from the moduli space of Hecke modifications of parabolic bundles into the moduli space of stable parabolic bundles with trivial determinant and a fixed number of marked points.

In number theory, i.e.,\ when $k$ is a finite field, the explicit descriptions of Hecke modifications allow us to explicitly calculate the action of Hecke operators on automorphic forms, and a connection with geometric Langlands program is established. This has been used to investigate the space of (unramified) automorphic forms and its subspaces spanned by eigenforms, cusp forms and toroidal forms, see e.g. \cite{oliver-elliptic} and \cite{rov-elliptic1}.   We refer to \cite{alvarenga-kaur-moço-25} for a complete discussion about Hecke modifications and its appearances across several branches of mathematics. 

\subsection*{Intention and scope of this article} In this work, we consider the case where the previous $D$ is supported in a single closed point $x \in X$. Let $|x|$ stand for the degree of $x$ and $\cK_x$ stand for its skyscraper sheaf. 

Let $r \in \Z_{>0}$. If $\E,\E' \in \Bun_n (X)$ are such that $\E'\subseteq \E$ and $\E/\E' \cong \cK_x^{\oplus r}$, we say that $\E'$ is a Hecke modification of $\E$ at $x$ with weight $r$. We denote such (isomorphism class of) Hecke modification by $[\E' \xrightarrow[r]{x} \E]$. 

Let $\F$ be a coherent sheaf  on $X$, we denote $\mu(\F) := \deg(\F)/\rk(\F)$ by the slope of $\F$. Moreover, all stability conditions in this article refer to $\mu$-stability. While our main goal is to explicitly describe Hecke modifications in the case where $X=\P^1$, we also provide structural results for a general curve. As, for example, Theorem \ref{breaksemistable}, which we state below:

\begin{thmA} \label{thm-A}
 Let $x \in X$ be a closed point, and let $\E, \E'\in \Bun_n(X)$. Write 
 $ \E:= \bigoplus_{i=1}^{m_1}\F_i$ and  $\E':= \bigoplus_{i=1}^{m_2}\F'_i$
 where $\cF_i$ and $\cF'_i$ are semistable bundles with $\mu(\cF_i) \leq \mu(\cF_{i+1})$ and $\mu(\cF_j ') \leq \mu(\cF_{j+1}')$ for $i= 1, \ldots, m_1$ and $j=1, \ldots, m_2$. Suppose that there exist $a,b \in \Z_{>0}$ such that
\[\E_1=\bigoplus_{i=1}^{a-1}\cF_i, \ \ \E_2=\bigoplus_{i=a}^{m_1}\cF_i, \ \ \E_1'=\bigoplus_{i=1}^{b-1}\cF_i'\ \ \text{and} \ \ \E_2'=\bigoplus_{i=b}^{m_2}\cF_i'\]
with $\rk(\E_1')=\rk(\E_1)$ and
$\mu(\cF_j')>\mu(\cF_i)$ for $j = b, \dots, m_2$ and $i = 1, \dots, a-1$.
Then $[\E' \xrightarrow[r]{x} \E]$ is a Hecke modification if and only if $[\E_1' \xrightarrow[r_1]{x} \E_1]$ and $[\E_2' \xrightarrow[r_2]{x} \E_2]$ are Hecke modifications, where $r_1 :=(\deg(\E_1')-\deg(\E_1))/|x|$ and $r_2 :=r-r_1$.
\end{thmA}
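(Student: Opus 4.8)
The plan is to prove the two implications separately, with essentially all the work in the converse. The \emph{if} direction is immediate: since $\E=\E_1\oplus\E_2$ and $\E'=\E_1'\oplus\E_2'$, given Hecke modifications $\E_1'\subseteq\E_1$ and $\E_2'\subseteq\E_2$ with $\E_i/\E_i'\cong\cK_x^{\oplus r_i}$, I would simply take the direct sum of the two inclusions to obtain $\E'\subseteq\E$ with $\E/\E'\cong\cK_x^{\oplus(r_1+r_2)}=\cK_x^{\oplus r}$. For the \emph{only if} direction, assume $[\E'\xrightarrow[r]{x}\E]$ is a Hecke modification and record the inclusion $\iota\colon\E'\hookrightarrow\E$ as a block matrix $\iota=\bigl(\begin{smallmatrix}\iota_{11}&\iota_{12}\\ \iota_{21}&\iota_{22}\end{smallmatrix}\bigr)$ with $\iota_{k\ell}\colon\E_\ell'\to\E_k$, relative to the splittings $\E'=\E_1'\oplus\E_2'$ and $\E=\E_1\oplus\E_2$. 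The first key step is to show that the upper-right block vanishes: indeed $\iota_{12}\in\Hom(\E_2',\E_1)=\bigoplus_{j=b}^{m_2}\bigoplus_{i=1}^{a-1}\Hom(\F_j',\F_i)$, and each summand is zero because $\F_j'$ and $\F_i$ are semistable with $\mu(\F_j')>\mu(\F_i)$. Hence $\iota$ is lower triangular.

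Next I would extract the diagonal blocks $\iota_{11}\colon\E_1'\to\E_1$ and $\iota_{22}\colon\E_2'\to\E_2$ and argue that each is injective with torsion cokernel. Since $\E/\E'$ is torsion, $\iota$ is an isomorphism over the function field; the hypothesis $\rk(\E_1')=\rk(\E_1)$ together with $\rk(\E')=\rk(\E)$ forces $\rk(\E_2')=\rk(\E_2)$, so both diagonal blocks are square at the generic point, and a block lower-triangular invertible matrix has invertible diagonal blocks. Thus $\iota_{11}$ and $\iota_{22}$ are generic isomorphisms, hence injective as maps of torsion-free sheaves, with torsion cokernels $Q_1:=\E_1/\iota_{11}(\E_1')$ and $Q_2:=\E_2/\iota_{22}(\E_2')$. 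Using the split short exact sequences $0\to\E_2'\to\E'\to\E_1'\to0$ and $0\to\E_2\to\E\to\E_1\to0$, a direct check (the off-diagonal block $\iota_{21}$ drops out of both squares) shows that $\iota_{22},\iota,\iota_{11}$ form a commutative ladder between these sequences. Applying the snake lemma, with all three vertical maps injective, yields
\[0\longrightarrow Q_2\longrightarrow \E/\E'\longrightarrow Q_1\longrightarrow 0.\]

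Finally I would exploit the structure of the cokernel. As $\E/\E'\cong\cK_x^{\oplus r}$ is annihilated by the maximal ideal $\fm_x$, it is a semisimple sheaf supported at $x$, so every subsheaf and every quotient is again annihilated by $\fm_x$ and supported at $x$, hence isomorphic to a power of $\cK_x$. Applied to the displayed sequence this gives $Q_2\cong\cK_x^{\oplus r_2}$ and $Q_1\cong\cK_x^{\oplus r_1}$ with $r_1+r_2=r$, and comparing degrees identifies the weights, $r_1=(\deg(\E_1)-\deg(\E_1'))/|x|$ and $r_2=r-r_1$. This exhibits $\iota_{11}$ and $\iota_{22}$ as the required modifications $[\E_1'\xrightarrow[r_1]{x}\E_1]$ and $[\E_2'\xrightarrow[r_2]{x}\E_2]$. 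I expect the \textbf{main obstacle} to be the very first step: recognizing that the slope hypothesis $\mu(\F_j')>\mu(\F_i)$ is exactly what makes $\Hom(\E_2',\E_1)=0$ and forces the triangular shape of $\iota$. Once that vanishing is in place, the remainder is formal, resting only on the snake lemma and the semisimplicity of the skyscraper $\cK_x$.
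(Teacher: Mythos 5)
Your proof is correct. Its first half is exactly the paper's opening move: the slope hypothesis plus the vanishing $\Hom(\cE,\cF)=0$ for semistable $\cE,\cF$ with $\mu(\cE)>\mu(\cF)$ (the paper's Lemma \ref{semistzero}) forces the block lower-triangular shape of the inclusion. Where you genuinely diverge is in how the two lower-rank modifications are extracted from the triangular map. The paper invokes its Proposition \ref{break0}, whose proof is local linear algebra: pass to a small affine neighborhood of $x$ where all the bundles are free over a principal ideal domain, and use the Smith Normal Form together with the gcd-of-minors description of elementary divisors to show that $\det(A)\det(D)=\pi_x^{r}$ forces the diagonal blocks $A$ and $D$ to have cokernels $\cK_x^{\oplus r_1}$ and $\cK_x^{\oplus r_2}$. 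You instead stay sheaf-theoretic: the generic-rank argument (using $\rk(\E_1')=\rk(\E_1)$ and lower-triangularity) makes the diagonal blocks injective with torsion cokernels, the commutative ladder between the split sequences yields $0\to Q_2\to \E/\E'\to Q_1\to 0$ by the snake lemma, and the observation that every subsheaf and quotient of $\cK_x^{\oplus r}$ is again a power of $\cK_x$ (it is a $\kappa(x)$-vector space) identifies the weights. Your route is shorter, coordinate-free, and avoids choosing trivializations and bases; the paper's route produces the explicit diagonal normal form, which it needs anyway and reuses in the later multiplicity computations, and it packages the block-splitting criterion as a standalone equivalence (existence of a realizing morphism with vanishing upper-right block) that is cited repeatedly. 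One cosmetic point: your $r_1=(\deg(\E_1)-\deg(\E_1'))/|x|$ has the correct sign, since the subsheaf has smaller degree; the opposite sign in the statement (and in Corollary \ref{break}) is a slip, as the paper's own Theorem \ref{spacedmultip} confirms.
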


Theorem \ref{thm-A} enables the construction of Hecke modifications from vector bundles of lower rank. Moreover, as a consequence, it yields a complete classification of Hecke modifications in the case where $X$ is the projective line over an algebraically closed field, see either Theorem \ref{thm-heckemodforP1anyfield}  or the Theorem \ref{teo2} below. 

If $k = \FF_q$ is a finite field, then $\Ext_{\Coh (X)}(\E',\cK_x^{\oplus r})$ is a finite set. In this setting, we are particularly interested in computing the multiplicity $m_{x,r}(\E', \E)$ of the Hecke modification $[\E' \xrightarrow[r]{x} \E]$, i.e., the number of distinct Hecke modifications $[\E'' \xrightarrow[r]{x} \E]$ with $\E'' \cong \E'$. As noted in \cite[Lemma 2.1]{alvarenga20}, in this case, one can obtain the Hecke modifications and their multiplicities from certain products in the Hall algebra of $\Coh(X)$. When $X$ is the projective line, we apply the structure results from \cite{baumann-kassel-01} to obtain a full classification of the Hecke modifications, including their multiplicities. In the following, we state our main results in this direction.
 
\begin{thmA}\label{teo2} 
        Let $x \in \P^1$ be a closed point of degree one, and let $\E \in \Bun_n\P^1$ be such that $\E = \bigoplus_{i=1}^n \cO(d_i)$. Then $\E'\in \Bun_n(\P^1)$ is a Hecke modification of $\E$ at $x$ of weight $r$ if and only if there exists a function  
        \[ \delta: \{1, \ldots, n\} \to \{0,1\}, \]  
        supported on $r$ indices, such that $ \E' \cong \bigoplus_{i=1}^n \cO(d_i-\delta(i))$. Furthermore, when $k=\FF_q$, we write  $\E \cong \bigoplus_{i=1}^m \bigoplus_{j=1}^{\ell_i} \cO(b_i)$, then
    \[m_{x,r}(\E', \E)=q^{\alpha}\prod_{j=1}^m \#\Gr(\theta_j,\ell_j),\]
    where $\theta_j=\#\{ i \in \{1, 2, \ldots, n\}\,|\, d_i=b_j \text{ and } \delta(i)=1\}$ and $ \alpha=\sum_{j=1}^m(\ell_j-\theta_j)(r-\sum_{i=1}^j \theta_i)$.
\end{thmA}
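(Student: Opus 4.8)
The plan is to reduce everything to a local computation at $x$ and then to a count of subspaces under a parabolic group action. Since $|x|=1$, the residue field at $x$ is $k$ and $\cK_x^{\oplus r}$ is annihilated by the maximal ideal $\fm_x$; hence any weight-$r$ Hecke modification $\E'\subseteq\E$ satisfies $\E(-1)=\fm_x\E\subseteq\E'\subseteq\E$. Passing to the fibre at $x$, write $V:=\E/\E(-1)$, an $n$-dimensional $k$-vector space, and $W:=\E'/\E(-1)\subseteq V$, of dimension $n-r$. This sets up a bijection between weight-$r$ Hecke modifications of $\E$ at $x$ and $(n-r)$-dimensional subspaces $W\subseteq V$, the quotient $V/W$ recording the torsion $\E/\E'\cong\cK_x^{\oplus r}$. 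The splitting $\E=\bigoplus_i\cO(d_i)$ induces $V=\bigoplus_j V_j$, where $V_j$ is the fibre of the isotypic block $\cO(b_j)^{\oplus\ell_j}$ and $b_1<\dots<b_m$ are the distinct values among the $d_i$. The ``if'' direction of the classification is then explicit: given $\delta$ supported on $r$ indices, taking the identity on the factors with $\delta(i)=0$ and the inclusion $\cO(d_i-1)=\cO(d_i)(-x)\hookrightarrow\cO(d_i)$ on those with $\delta(i)=1$ exhibits $\bigoplus_i\cO(d_i-\delta(i))$ as a weight-$r$ Hecke modification.

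For the ``only if'' direction I would first observe that, because $\Hom(\cO(b_j),\cO(b_i))$ surjects onto $\Hom(V_j,V_i)$ by evaluation at $x$ whenever $b_i\geq b_j$, the image of $\Aut(\E)$ in $\GL(V)$ is exactly the parabolic $P$ stabilising the flag $V\supset V_{>1}\supset\dots\supset V_{>m}=0$, where $V_{>j}=\bigoplus_{i>j}V_i$ is the fibre of the canonical higher-degree subbundle. Acting by an element of $P$, I may assume $W=\bigoplus_j W_j$ with $W_j\subseteq V_j$ of dimension $\ell_j-\theta_j$; this alters $\E'$ only by an automorphism of $\E$, hence not its isomorphism class. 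For such a graded $W$ the inclusion $\E'\subseteq\E$ is block-diagonal, so $\E'$ splits as a sum of single-block modifications — the situation governed by Theorem \ref{thm-A}, the blocks having strictly separated slopes — and the homogeneity of one isotypic block (its automorphism group $\GL_{\ell_j}(k)$ acts transitively on subspaces of $V_j$ of fixed dimension) identifies the $j$-th piece with $\cO(b_j)^{\oplus(\ell_j-\theta_j)}\oplus\cO(b_j-1)^{\oplus\theta_j}$. This yields the stated form with $\theta_j=\#\{i:d_i=b_j,\ \delta(i)=1\}$.

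For the multiplicity, following \cite[Lemma 2.1]{alvarenga20} I would identify $m_{x,r}(\E',\E)$ with the Hall number $g^{\E}_{\cK_x^{\oplus r},\E'}$, i.e.\ with the number of subspaces $W\subseteq V$ producing a bundle isomorphic to $\E'$. The decisive structural point is that the isomorphism class of $\E'$ is a complete invariant of the $P$-orbit of $W$: the profile $\theta=(\theta_j)$, with $\theta_j$ the dimension of the $j$-th graded piece of the induced filtration on $V/W$, is recovered from the splitting type of $\E'$ by peeling off degrees from the bottom (the multiplicity of $\cO(b_1-1)$ equals $\theta_1$, then that of $\cO(b_1)$ fixes $\theta_2$, and so on), so distinct isomorphism classes correspond to distinct $P$-orbits. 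It remains to compute the size of the $P$-orbit with profile $\theta$: the semisimple freedom inside block $j$ contributes the factor $\#\Gr(\theta_j,\ell_j)$, while the unipotent radical of $P$ contributes an affine space whose dimension counts the pairs made of a retained direction in a block $j$ and a lowered direction in a higher block, namely $\alpha=\sum_{j}(\ell_j-\theta_j)\sum_{i>j}\theta_i=\sum_{j=1}^m(\ell_j-\theta_j)\big(r-\sum_{i=1}^j\theta_i\big)$.

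The main obstacle is this last orbit-size computation: one must check that, modulo the block stabilisers, the unipotent directions of $P$ act with the stated number of free parameters, so that the exponent of $q$ is exactly $\alpha$ and not a neighbouring value — a verification that is sensitive to the increasing-degree ordering of the blocks and to which side of the flag the unipotent entries lie on (this is precisely the orientation fixed by $\Hom(\cO(b_j),\cO(b_i))\neq0\iff b_j\leq b_i$). The structure results of \cite{baumann-kassel-01} on the Hall algebra of $\Coh(\P^1)$ give an alternative and arguably cleaner route to the same count, expressing $g^{\E}_{\cK_x^{\oplus r},\E'}$ through the explicit multiplication in that algebra; I would use their formulas to cross-check both the Gaussian-binomial factors $\#\Gr(\theta_j,\ell_j)$ and the exponent $\alpha$.
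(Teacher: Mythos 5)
Your proposal is correct, but it follows a genuinely different route from the paper's. The paper proves the ``if'' direction exactly as you do (Lemma \ref{existtrivial}, direct sums of $0 \to \cO(-x) \to \cO \to \cK_x \to 0$), but gets the ``only if'' direction from duality and degree bounds (Theorem \ref{menorqued} and Corollary \ref{greater0}), and computes the multiplicity entirely inside the Hall algebra: the explicit product $\cK_x^{\oplus r}*\E$ (Theorem \ref{hpro}), its torsion-free part (Corollary \ref{prodsemtor}), the factorization of multiplicities across degree-separated blocks (Theorem \ref{spacedmultip}), and the isotypic case (Lemma \ref{multgrass}) are then assembled inductively in Theorem \ref{deg1anyr}. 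You instead work at the fibre: since $|x|=1$, modifications are exactly the $(n-r)$-dimensional subspaces $W \subseteq \E_x \otimes \kappa(x)$ (this is the paper's Theorem \ref{thm-grass}), the image of $\Aut(\E)$ in $\GL(V)$ is the parabolic $P$ preserving the flag of higher-degree blocks, each $P$-orbit has a graded representative (which yields the classification), distinct profiles $(\theta_j)$ give non-isomorphic bundles by your peeling argument, and the multiplicity is the orbit size, i.e.\ a Schubert-cell count. The two standard facts you defer do check out: $P$-orbits on the Grassmannian are the cells of fixed incidence data $\dim\bigl(W \cap V_{>j}\bigr)$, and the cell with profile $\theta$ fibres over $\prod_j \Gr(\ell_j-\theta_j,V_j)$ with affine fibres of dimension $\sum_j(\ell_j-\theta_j)\sum_{i>j}\theta_i$, which equals $\alpha$ because $r-\sum_{i\leq j}\theta_i=\sum_{i>j}\theta_i$; one can confirm the orientation on $\E=\cO\oplus\cO(1)$, where the orbit of the degree-$0$ fibre line has size $q$ and gives $\cO\oplus\cO$, matching the paper's rank-$2$ table. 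What each approach buys: yours is more elementary and self-contained (pure linear algebra over $\kappa(x)$, no Hall-algebra input needed except as a cross-check), it proves the classification uniformly over any base field, and it explains the shape of the formula conceptually --- the Gaussian binomials $\#\Gr(\theta_j,\ell_j)$ come from the Levi of $P$ and $q^{\alpha}$ from its unipotent radical; the paper's Hall-algebra machinery is heavier here but is not special to degree-one points, and the same intermediate results (notably Theorems \ref{hpro} and \ref{spacedmultip}) are reused for closed points of higher degree (Theorem \ref{thm-C}) and for the automorphic applications, which your fibre-level argument, tied to $\fm_x\E \subseteq \E'$, does not reach.
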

This is Lemma \ref{existtrivial}
 together with Theorem \ref{deg1anyr}.
In particular, Theorem \ref{teo2} leads with the classification of all Hecke modifications of vector bundles in the projective line over any algebraically closed field.

For closed points of higher degree in $\P^1$, when $k = \Fq$, the following theorem, which is Theorem \ref{thm-C}, gives necessary and sufficient conditions for the existence of Hecke modifications. 

\begin{thmA}
Let  $x \in \P^1$ be a closed point of degree $d$, and let $\E', \E \in \Bun_n\P^1$ be such that $\E := \bigoplus_{i=1}^n \cO(d_i)$ and $\E' := \bigoplus_{i=1}^n \cO(d_i')$. Let us suppose that $d_i'=d_i-\epsilon_i$, where $ \epsilon_i \in \{0, 1, \ldots, d\}$ for all $i \in \{1, \ldots, n\}$ and $\sum_{i=1}^n \epsilon_i= d$. Let $A: = \{i \in \{1, \ldots, n\}\,\big|\, \epsilon_i \neq 0\}$, $s : =\min(A)$ and $B:=\max(A)$. Then $\E'$ is a Hecke modification of $\E$ at $x$ with weight $1$ if and only if 
    \[d_{j+1}-\epsilon_{j+1} \leq d_j \ \ \text{for all $j \in \{s, s+1, \ldots , B-1\}$}.\]
\end{thmA}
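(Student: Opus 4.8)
The plan is to reduce the existence of the modification to a concrete linear-algebra problem inside the fibre of $\E$ at $x$, and then to read off the splitting type of the kernel from cohomological jump data. Write $d=|x|$ and let $k(x)$ be the residue field, so $\dim_{\FF_q}k(x)=d$. A weight-$1$ modification $[\E'\xrightarrow[1]{x}\E]$ is the same datum as a surjection $\phi\colon\E\twoheadrightarrow\cK_x$ with $\E'=\ker\phi$, two surjections giving the same $\E'$ exactly when they differ by an element of $\Aut(\cK_x)=k(x)^\times$. Since $\cK_x$ is supported at $x$ and each $\cO(d_i)$ is locally free of rank one, such a $\phi$ is a tuple $a=(a_1,\dots,a_n)$ with $a_i\in\Hom(\cO(d_i),\cK_x)\cong k(x)$, surjective iff $a\neq 0$. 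First I would record that the isomorphism class of $\E'=\ker\phi$ depends only on the orbit of $[a]\in\P(k(x)^n)$ under $\Aut(\E)$, where $g=(g_{ij})$ acts by the column operations $a_j\mapsto\sum_i a_i\,g_{ij}(x)$ with $g_{ij}\in H^0(\cO(d_i-d_j))$; only the value $g_{ij}(x)\in\mathrm{ev}_x\!\big(H^0(\cO(d_i-d_j))\big)$ matters, and this entry is unconstrained precisely when $d_i-d_j\ge d-1$. Throughout I take the labelling $d_1\le\cdots\le d_n$, matching the slope convention of Theorem \ref{thm-A}.

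The key invariant is that the splitting type of any bundle $\cF$ on $\P^1$ is determined by $m\mapsto h^0(\cF(m))$. Applying $H^0$ to $0\to\E'(m)\to\E(m)\xrightarrow{\phi}\cK_x\to 0$ gives
\[
h^0(\E'(m))=h^0(\E(m))-\dim_{\FF_q}F_m,\qquad F_m:=\mathrm{im}\big(H^0(\E(m))\to H^0(\cK_x)=k(x)\big),
\]
an increasing filtration of $k(x)$ by $\FF_q$-subspaces. Unwinding the map summand by summand yields $F_m=\sum_{i=1}^n a_i\,W^{(d_i+m)}$, where $W^{(e)}:=\mathrm{ev}_x\big(H^0(\cO(e))\big)$ form a complete flag $0=W^{(-1)}\subsetneq W^{(0)}\subsetneq\cdots\subsetneq W^{(d-1)}=k(x)$ with $\dim_{\FF_q}W^{(e)}=\min(e+1,d)$ for $e\ge 0$. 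The target $\E'\cong\bigoplus\cO(d_i-\epsilon_i)$ is then equivalent to prescribing
\[
\dim_{\FF_q}F_m=\sum_{i=1}^n\big(\max(0,d_i+m+1)-\max(0,d_i-\epsilon_i+m+1)\big),
\]
so that the $i$-th summand must supply its $\epsilon_i$ jumps at the consecutive levels $m\in\{-d_i,\dots,-d_i+\epsilon_i-1\}$.

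I would then isolate the active range $\{s,\dots,B\}$: the summands with $i>B$ are forced to have $a_i=0$ (a top‑degree summand whose flag saturates first cannot drop without changing $\E'$), while those with $i<s$ may be taken with $a_i=0$, so the problem decouples into the active block (including its interior inert summands, which genuinely participate through their flags). The heart is then the realisability statement: the prescribed dimension function $\dim F_m$ is attained by some $(a_i)\in k(x)^n$ if and only if $d_{j+1}-\epsilon_{j+1}\le d_j$ for all $j\in\{s,\dots,B-1\}$. The mechanism is that, because $W^{(e)}=k(x)$ once $e\ge d-1$, a summand of large degree $d_i$ forces $a_iW^{(d_i+m)}$ to saturate $k(x)$ quickly; the surplus dimensions it then contributes must be \emph{absorbed} by the flags of lower-degree summands, and such absorption is geometrically possible exactly when the degrees interlace as in the stated inequality. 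Necessity I would obtain by tracking, level by level, the smallest value of $\dim F_m$ compatible with the forced saturation of each $a_iW^{(d_i+m)}$; sufficiency by an explicit upward induction on the active indices, at each new level choosing $a_i$ transverse to the already‑built filtration within the available flag room, using that for equal degrees the full $\GL$ of the block acts.

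I expect the \textbf{main obstacle} to be precisely this realisability lemma, i.e.\ controlling $\dim_{\FF_q}\sum_i a_iW^{(e_i)}$ for scaled pieces of one fixed complete flag. A naive level‑by‑level capacity count is \emph{insufficient}: the orbit $a\mapsto aW^{(e)}$ is a very thin family of subspaces, forced intersections can exceed the generic value, and—most importantly—the early saturation $W^{(e)}=k(x)$ for $e\ge d-1$ can compel $\dim F_m$ to be strictly larger than the target unless lower summands are present to absorb the excess. Establishing the exact feasibility condition uniformly in $q$ (so that a valid $a$ exists already over $\FF_q$) is the delicate point; I would either carry out the inductive transverse construction carefully, or—following the strategy already used for Theorem \ref{teo2}—bypass it by evaluating the structure constant $m_{x,1}(\E',\E)$ directly in the Hall algebra of $\Coh(\P^1)$ from the product $[\cK_x]\cdot[\E']$ via the presentation of \cite{baumann-kassel-01}, where non‑vanishing of the coefficient of $[\E]$ is governed by the same inequalities.
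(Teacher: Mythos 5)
Your reduction is correct and genuinely different from the paper's route: you translate the existence of a weight-one modification into the existence of $a=(a_1,\dots,a_n)\in\kappa(x)^n\setminus\{0\}$ such that the $\FF_q$-dimensions of the subspaces $F_m=\sum_i a_iW^{(d_i+m)}\subseteq\kappa(x)$ match the values forced by the target splitting type, using that splitting types on $\P^1$ are detected by $m\mapsto h^0(\,\cdot\,(m))$. This part is sound, with one caveat: the claim that $a_i=0$ is \emph{forced} for $i>B$ is only true up to relabelling summands of equal degree (for $\E=\cO\oplus\cO$, $\E'=\cO(-2)\oplus\cO$, $|x|=2$, one has $s=B=1$, yet $a=(0,1)$ also realizes $\E'$). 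The paper instead works entirely inside the Hall algebra: by Corollary \ref{prodsemtor}, the modification $[\E'\xrightarrow{x}\E]$ exists iff $\E$ occurs in $\hprod_{i=1}^n\cO(d_i'+\tilde\sigma(i)d)$ for some $\tilde\sigma\in\Delta_1^n$; the proof shows one may take $\tilde\sigma$ supported at $s$, and then cascades the product formula for $\cO(a)*\cO(b)$ of Theorem \ref{hallprop}(iv) from index $s$ to index $B$, the inequalities $d_{j+1}-\epsilon_{j+1}\le d_j$ being exactly what permits (respectively obstructs) recovering the summand $\cO(d_j)$ at each step.

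However, your proposal has a genuine gap, and you name it yourself: the ``realisability lemma'' --- that a valid $a$ exists if and only if $d_{j+1}-\epsilon_{j+1}\le d_j$ for all $j\in\{s,\dots,B-1\}$ --- carries the entire content of the theorem, and it is never proved. What you give is a mechanism (``saturation'' and ``absorption'') and a plan (track the minimal possible $\dim_{\FF_q}F_m$ for necessity; choose $a_i$ ``transverse'' by induction for sufficiency), followed by your own admission that the naive count fails because the family $a\mapsto aW^{(e)}$ is thin and forced intersections exceed generic values; your proposed fallback is to compute $m_{x,1}(\E',\E)$ in the Hall algebra via \cite{baumann-kassel-01}, which is the paper's proof, not yours. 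As written, the argument therefore reduces the theorem to an unproved combinatorial statement of essentially the same depth. To complete it along your lines you would need both directions of the flag analysis: for necessity, a lower bound on $\dim_{\FF_q}\sum_i a_iW^{(e_i)}$ showing that a single violated inequality makes the prescribed jump pattern unattainable for \emph{every} nonzero $a$; for sufficiency, an explicit witness $a$ (say, with each $a_i$ an appropriate power of a generator of $\kappa(x)$ over $\FF_q$) verified against all the dimension equalities simultaneously. Neither step is routine, and neither is supplied.
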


As previously explained, when \( k \) is a finite field, Hecke modifications describe the action of Hecke operators on the space of (unramified) automorphic forms. In this setting, we conclude the article by applying the calculations from Section \ref{sec-explicit calculations} to investigate the space of unramified automorphic forms for \( \PGL_n \) over \( \mathbb{P}^1 \). In Theorem \ref{thm-toroidal}, we prove that the space of unramified toroidal automorphic forms is trivial for every \( n \geq 2 \). In the classical setting, toroidal automorphic forms were introduced by Don Zagier in \cite{zagier-81}, where he showed that if the space of toroidal automorphic forms is a unitarizable representations, then a formula of Hecke implies the Riemann Hypothesis. These automorphic forms were further investigated by Lorscheid in \cite{oliver-toroidal}, in the context of global function fields.

\section{Hecke modifications }
 
In this section, we let \( X \) be a smooth projective curve defined over an arbitrary field \( k \). After introducing the main object of this article and its basic properties, we investigate the question of when Hecke modifications can be obtained from lower-rank Hecke modifications.

\subsection*{Preliminaries}

 Let $x \in X$ be a closed point of degree $|x|$ and residue field $\kappa(x)$. Let $\pi_x$ be a uniformizer for $x$. Let $ r \in \Z$ be a positive integer, we denote by $\cK_{x}^{\oplus r}$ the skyscraper sheaf supported at $x$ with stalk $\kappa(x)^{\oplus r}$. Let $\mathrm{Coh} (X)$ be the category of coherent sheaves on $X$ and $\Bun_n (X)$ be the set of isomorphism classes of rank $n$ vector bundles on $X$.  We shall consider vector bundle on $X$ as locally free sheaves on $X$. Hence, we consider $\Bun_n (X)$ to be embedded in  $\Coh (X)$, cf.\  \cite[Ex. II.5.18]{hartshorne}. We denote $\Bun_1 (X)$ by $\Pic (X)$, which is an abelian group with the group operation given by the tensor product. We let $\mathcal{O}$ stand for the structural sheaf of $X$. 

\begin{df} \label{def-heckemod}
    Given a vector bundle $\E \in \Bun_n (X)$, a closed point $x \in X$, and a positive integer $r \in \Z$. We denote by $[\E' \xrightarrow[r]{x} \E]$ the isomorphism class of exact sequences
    \[ 0 \rightarrow \E' \rightarrow \E \rightarrow \cK_x^{\oplus r} \rightarrow 0, \]
    where two such sequences
    \[ 0 \rightarrow \E_1 \rightarrow \E \rightarrow \cK_x^{\oplus r} \rightarrow 0 \ \ \text{and} \ \ 0 \rightarrow \E_2 \rightarrow \E \rightarrow \cK_x^{\oplus r} \rightarrow 0 \]
    are equivalent if there are isomorphisms $\E_1 \rightarrow \E_2$ and $\cK_x^{\oplus r} \rightarrow \cK^{\oplus r}_x$ such that the following diagram commutes:
    $$\xymatrix{ 0 \ar[r] & \E_1 \ar[r] \ar[d]^{\cong} & \E \ar[r] \ar@{=}[d] & \cK_x^{\oplus r} \ar[r] \ar[d]^{\cong} & 0 \\
    0 \ar[r] & \E_2 \ar[r] & \E \ar[r] & \cK_x^{\oplus r} \ar[r] & 0. }$$
    If such an exact sequence exists, we say that $\E'$ is a \textbf{Hecke modification of $\E$ at $x$ with weight $r$} or, alternatively, that $\E$ is Hecke modified in $\E'$ at $x$ with weight $r$. When $r=1$, the notation $[\E' \xrightarrow[r]{x} \E]$ is simplified to $[\E' \xrightarrow[]{x} \E]$.
\end{df}

\begin{df}
   If $\mathcal{F}$ and $\G$ are coherent sheaves on $X$, then $\Ext^1(\mathcal{F},\G)$ is a finite-dimensional vector space over $k$. Hence, when $k$ is a finite field, we may also define for $\E,\E' \in \Bun_n (X)$ the quantity $m_{x,r}(\E',\E)$ as the number of isomorphism classes of exact sequences
$$0 \longrightarrow \E'' \longrightarrow \E \longrightarrow \cK_{x}^{\oplus r} \longrightarrow 0$$
with fixed $\E$ such that $\E'' \cong \E'.$ We refer to $m_{x,r}(\E',\E)$ as the \textbf{multiplicity of the Hecke modification $[\E' \xrightarrow[r]{x} \E]$}. 
\end{df}

\begin{ex}
    Let $X$ be the projective line $\Proj(\FF_2[S,T])$ over $\FF_2$. Let $x$ be the closed point of degree $2$  corresponding to the maximal ideal $\langle T^2+ST+S^2 \rangle$. There are five Hecke modifications of $\E = \cO \oplus \cO$ at $x$ with weight $1$:  
    \begin{itemize}
        \item $\cO(-1)\oplus \cO(-1)$ with morphism classes 
    \[\varphi_1(U) =\begin{pmatrix}
        T & T+S \\
        S & T
    \end{pmatrix} \ \ \text{ and } \ \  \varphi_2(U) =\begin{pmatrix}
        S & T+S \\
        T & S
    \end{pmatrix}, \ \text{and} \]
    \item $\cO(-2)\oplus\cO$, with three morphism classes
\[\varphi_3 =\begin{pmatrix}
        T^2+ST+S^2 & 0\\
        0 & 1
    \end{pmatrix} , \ \ 
    \varphi_4 =\begin{pmatrix}
        T^2+ST+S^2 & 1\\
        0 & 1
    \end{pmatrix} \ \ \text{ and} \ \ \varphi_5 =\begin{pmatrix}
        T^2+ST+S^2 & 0\\
        T^2+ST+S^2 & 1
    \end{pmatrix}.\]
    \end{itemize}

    Therefore, we conclude that 
    \[m_{x,1}(\cO(-1)\oplus\cO(-1), \cO\oplus\cO )=2 \  \text{ and } \  m_{x,1}( \cO(-2)\oplus\cO, \cO\oplus\cO)=3. \]
    The sum of these multiplicities is equal to $\#\Gr(1,2)({\FF_{2^2}})=5$. This is not a coincidence; indeed, we can identify the set of Hecke modifications of $\E \in \Bun_n(X)$ at $x$ with weight $r$ with the $\kappa(x)$-rational points of the Grassmannian $\Gr(n-r,n)$, according to the  following theorem. 
\end{ex}

\begin{thm} \label{thm-grass}
     Let $\E \in \Bun_n (X)$. The set of Hecke modifications of $\E$ at $x$ with weight $r$ can be canonically identified with the set of dimension $n-r$ subspaces of the $\kappa(x)$-vector space $\E_x \otimes \kappa(x)$. In other words, 
    the set of Hecke modifications of $\E$ at $x$ with weight $r$ is the set of $\kappa(x)$-rational points of the Grassmannian $\mathrm{Gr}(n-r,n).$ 
\end{thm}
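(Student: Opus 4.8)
The plan is to reduce the problem to linear algebra in the fiber $\E_x\otimes\kappa(x)$ by exploiting that the cokernel $\cK_x^{\oplus r}$ is annihilated by the maximal ideal $\fm_x=(\pi_x)$ of the local ring $\cO_{X,x}$, which is a discrete valuation ring since $X$ is a smooth curve. First I would reinterpret the isomorphism classes appearing in Definition \ref{def-heckemod}. Since in the commuting diagram the middle vertical map is the identity on $\E$, commutativity of the left square forces the two kernels to coincide as subsheaves of $\E$; conversely, two surjections $\E\twoheadrightarrow\cK_x^{\oplus r}$ with the same kernel differ by an automorphism of $\cK_x^{\oplus r}$, which is exactly the data of the right vertical isomorphism. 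Hence the set of Hecke modifications of $\E$ at $x$ with weight $r$ is canonically identified with the set of subsheaves $\E'\subseteq\E$ such that $\E/\E'\cong\cK_x^{\oplus r}$.

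Next I would pass to stalks. Because $\cK_x^{\oplus r}$ is supported at $x$, any such $\E'$ satisfies $\E'_y=\E_y$ for every closed point $y\neq x$, so $\E'$ is determined by the submodule $M:=\E'_x\subseteq\E_x$, a finitely generated $\cO_{X,x}$-submodule with $\E_x/M\cong\kappa(x)^{\oplus r}$. The crux is that $\kappa(x)^{\oplus r}$ is killed by $\fm_x$, so $\fm_x\E_x\subseteq M$ and $M$ is the preimage in $\E_x$ of the subspace $\overline M:=M/\fm_x\E_x$ of the fiber $\E_x/\fm_x\E_x=\E_x\otimes\kappa(x)$. Under this correspondence the condition $\E_x/M\cong\kappa(x)^{\oplus r}$ becomes $\dim_{\kappa(x)}\bigl((\E_x\otimes\kappa(x))/\overline M\bigr)=r$, that is $\dim_{\kappa(x)}\overline M=n-r$. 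Thus submodules $M$ as above are in bijection with $(n-r)$-dimensional $\kappa(x)$-subspaces of $\E_x\otimes\kappa(x)$.

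Finally I would check that this correspondence is a genuine bijection at the level of subsheaves and identify the target. Given an $(n-r)$-dimensional subspace $V\subseteq\E_x\otimes\kappa(x)$, its preimage $M\subseteq\E_x$ defines a coherent subsheaf $\E'\subseteq\E$ (equal to $\E$ away from $x$); since $\cO_{X,x}$ is a discrete valuation ring and $M$ is a finitely generated torsion-free module of full rank $n$, it is free, so $\E'$ is locally free of rank $n$, and by construction $\E/\E'\cong\cK_x^{\oplus r}$. Combining this with the first step yields the asserted canonical identification, and since the set of $(n-r)$-dimensional subspaces of an $n$-dimensional $\kappa(x)$-vector space is precisely $\Gr(n-r,n)(\kappa(x))$, the theorem follows. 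The main obstacle is the careful bookkeeping in the first step---verifying that the equivalence of exact sequences with fixed middle term collapses exactly to equality of kernel subsheaves, with $\Aut(\cK_x^{\oplus r})$ accounting for the remaining freedom---together with confirming local freeness of $\E'$ via the DVR structure at $x$.
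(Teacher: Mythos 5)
Your proposal is correct, and it is a genuine proof where the paper offers none: the paper disposes of Theorem \ref{thm-grass} by citation (to \cite[Thm.\ 2.6]{alvarenga19} and \cite[Lemma 2.4]{dennis-03}), so your argument supplies the self-contained verification that the paper outsources. The route you take is the standard one, and it in fact matches the sketch the paper gives in the remark immediately following the theorem: there the authors record the surjection on fibers and, conversely, construct $\E'$ from a subspace $W \subseteq \E_x \otimes \kappa(x)$ of dimension $n-r$ by the section-wise condition $s(x) \in W$, which is exactly your preimage construction localized at $x$. What your write-up adds beyond that remark are the two points that make the identification a theorem rather than a heuristic: (i) the verification that the equivalence relation of Definition \ref{def-heckemod} (with the identity on the middle term) collapses precisely to equality of kernels as subsheaves of $\E$, with $\Aut(\cK_x^{\oplus r})$ absorbing the residual freedom --- this is what makes the bijection well defined on isomorphism classes; and (ii) the observation that $\fm_x \E_x \subseteq M$ because the quotient is killed by $\fm_x$, which is the exact point where the problem becomes linear algebra in the fiber, together with the DVR argument (finitely generated, torsion-free, full rank, hence free) guaranteeing that the subsheaf produced from a subspace is again a rank-$n$ vector bundle. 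One step you state without proof but use essentially --- that a coherent subsheaf agreeing with $\E$ away from $x$ is determined by its stalk at $x$, and that a coherent sheaf supported at the single point $x$ with stalk $\kappa(x)^{\oplus r}$ is isomorphic to $\cK_x^{\oplus r}$ --- is standard and unproblematic, but worth flagging as the only background facts your argument relies on beyond the DVR structure.
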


\begin{proof} See either \cite[Thm. 2.6]{alvarenga19} or \cite[Lemma 2.4]{dennis-03}.\end{proof}

\begin{rem} Given a Hecke modification $[\E' \xrightarrow[r]{x} \E]$, taking into account the stalk at $x$, we have a sequence of $\cO_{x}-$modules
\[ 0 \rightarrow \E_{x}' \rightarrow \E_{x} \rightarrow \kappa(x)^{\oplus r} \rightarrow 0.\]
This short exact sequence is no longer injective when evaluated on the geometric fiber at $x$ (i.e., after tensorising with $\kappa(x)$). Consequently, we may write the restriction on the fiber at $x$ as
\[ 0 \rightarrow \ker(\E_{x}' \otimes \kappa(x) \rightarrow \E_{x} \otimes \kappa(x) ) \rightarrow \E_{x}' \otimes \kappa(x) \rightarrow \E_{x} \otimes \kappa(x) \rightarrow \kappa(x)^{\oplus r} \rightarrow 0.\]
The previous construction associates $[\E' \xrightarrow[r]{x} \E]$ to a vector subspace of dimension $r$ in $\E_{x}' \otimes \kappa(x) $ and to a vector subspace of dimension $n-r$ in 
$\E_{x} \otimes \kappa(x)$. Therefore we can restrict the study of Hecke modifications to weights $r$ ranging from $0$ to $n=\rk(\E)$.  

Conversely, given a subspace $V \subset \E_{x}' \otimes \kappa(x)$ of dimension $r$, we define $\E$ to be the subsheaf of $\E'(x) := \E' \otimes \mathcal{O}_X(x)$ whose set of sections over an open set $U \subseteq X$ is given by
\[\E(U) := \big\{ s \in \E'(x)(U) \,\big|\, s= \pi_x^{-1}t, t \in \E'(U), \text{ and if } x \in U, \text{ then } t(x) \in V \big\}\]
where $\pi_x$ is a uniformizer of $x$ in $X$. Thus, we obtain 
\[0 \longrightarrow \E' \longrightarrow \E \longrightarrow \cK_{x}^{\oplus r} \longrightarrow 0, \]
a short exact sequence of coherent sheaves.

Moreover, given a subspace $W \subset \E_{x} \otimes \kappa(x)$ of dimension $n-r$, we define $\E'$ to be the subsheaf of $\E$ whose set of sections over an open set $U \subseteq X$ is given by
\[\E'(U) := \big\{ s \in \E(U) \,\big|\, \text{ if } x \in U, \text{ then } s(x) \in W \big\}.\]
Hence we also obtain 
\[0 \longrightarrow \E' \longrightarrow \E \longrightarrow \cK_{x}^{\oplus r} \longrightarrow 0 \]
a short exact sequence of coherent sheaves.
\end{rem}

\subsection*{Smith Normal Form} In the following, we apply the Elementary Divisor Theorem to show that a morphism $\varphi$ realizing a Hecke modification $[\E' \xrightarrow[r]{x} \E]$ can be represented, up to automorphism of $\E'$ and $\E$, as a diagonal matrix in a neighborhood $U$ of $x$. This diagonal form, known as Smith Normal Form of $\varphi(U)$, will be denoted by $\mathrm{SNF}(\varphi(U))$.  This local description will allow us to investigate some Hecke modifications from lower-rank Hecke modifications, which is the main goal of this section.

\begin{prop}
    Let  $\E, \E'\in \Bun_n(X)$ be such that $[\E' \xrightarrow[r]{x} \E]$. Let $\varphi:\E'\to \E$ be a morphism realizing $[\E' \xrightarrow[r]{x} \E]$. Then, there is $U$ a sufficiently small affine neighborhood of $x$ in $X$, such that  $\varphi(U)$ admits a Smith Normal Form 
    $$\mathrm{SNF}(\varphi(U))= \begin{pmatrix}
    \alpha_1 & 0 & 0 & \cdots & 0\\
    0 & \alpha_2 & 0 & \cdots & 0 \\
    \vdots & & & \cdots &\vdots \\
    0 & 0 & 0 & \cdots & \alpha_n
\end{pmatrix}=: \mathrm{diag}(\alpha_1, \dots , \alpha_n),$$
where \[ \alpha_i = \left\{\begin{array}{ll} 1, \ \ \ \  \text{if} \  i\leq n-r \\ \pi_x  \ \ \  \ \text{if} \  i>n-r.\end{array} \right. \]
\end{prop}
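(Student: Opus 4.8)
The plan is to reduce the statement to the Elementary Divisor Theorem over the discrete valuation ring $\cO_{X,x}$ and then propagate the resulting diagonalization to a small affine neighborhood of $x$. First I would choose an affine open $U=\Spec A$ containing $x$ small enough that both $\E|_U$ and $\E'|_U$ are free $A$-modules of rank $n$; intersecting two trivializing neighborhoods makes this possible. Fixing trivializations, $\varphi(U)$ becomes an $n\times n$ matrix $M$ over $A$, and passing to $PMQ$ with $P\in\Aut(\E|_U)=\GL_n(A)$ and $Q\in\Aut(\E'|_U)=\GL_n(A)$ is exactly a change of trivialization. Since the defining sequence shows $\varphi$ is injective, $\det M\neq 0$. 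Localizing at the maximal ideal $\fm_x\subset A$ of $x$, the ring $\cO_{X,x}=A_{\fm_x}$ is a DVR with uniformizer $\pi_x$, hence a principal ideal domain, so the Elementary Divisor Theorem yields $P_0,Q_0\in\GL_n(\cO_{X,x})$ with
\[ P_0 M Q_0=\mathrm{diag}(\pi_x^{e_1},\dots,\pi_x^{e_n}),\qquad 0\le e_1\le\cdots\le e_n, \]
the exponents being finite because $\det M\neq 0$ (the units produced by the theorem having been absorbed into $P_0,Q_0$).

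To pin down the exponents I would compute the cokernel. Localizing the defining sequence at $x$ gives $\operatorname{coker}(\varphi)_x\cong\kappa(x)^{\oplus r}$, whereas the diagonal form gives $\operatorname{coker}(\varphi)_x\cong\bigoplus_{i=1}^n\cO_{X,x}/(\pi_x^{e_i})$. The latter is annihilated by $\pi_x$ exactly when every $e_i\le 1$, and has length $\sum_i e_i$; comparison with $\kappa(x)^{\oplus r}$, which has length $r$ and is killed by $\pi_x$, forces each $e_i\in\{0,1\}$ with precisely $r$ of them equal to $1$. The ordering then gives $e_1=\cdots=e_{n-r}=0$ and $e_{n-r+1}=\cdots=e_n=1$, so the diagonal entries are $\alpha_i=1$ for $i\le n-r$ and $\alpha_i=\pi_x$ for $i>n-r$, as claimed.

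Finally I would spread the identity $P_0 M Q_0=\mathrm{diag}(\alpha_1,\dots,\alpha_n)$ out from $\cO_{X,x}$ to a neighborhood: the finitely many entries of $P_0,Q_0,P_0^{-1},Q_0^{-1}$ are germs of regular functions at $x$, so after shrinking $U$ to invert the relevant denominators they all lie in $A$, and on this smaller $U$ the relations $P_0P_0^{-1}=Q_0Q_0^{-1}=\mathrm{id}$ and the diagonalization persist; thus $P_0,Q_0$ become genuine automorphisms of $\E|_U$ and $\E'|_U$ realizing $\mathrm{SNF}(\varphi(U))$. I expect this last step to be the main obstacle, since it requires care to check that automorphisms existing only over the local ring descend to an affine neighborhood with invertibility preserved after shrinking; by contrast the algebraic heart — identifying the elementary divisors as $1$ and $\pi_x$ — is short once the cokernel length-and-annihilator bookkeeping is in place.
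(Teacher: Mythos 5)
Your proof is correct, and its algebraic heart is the same as the paper's: apply the Elementary Divisor Theorem and then pin down the elementary divisors from $\mathrm{coker}(\varphi)\cong\kappa(x)^{\oplus r}$ by an annihilator-and-length argument. The route differs in where the theorem is applied. The paper shrinks $U$ until $\E(U)$ and $\E'(U)$ are free and asserts that $\cO(U)$ is itself a principal ideal domain, then diagonalizes $\varphi(U)$ directly over $\cO(U)$ (implicitly using that taking sections over the affine $U$ preserves exactness); no spreading-out step is then needed. You instead diagonalize over the discrete valuation ring $\cO_{X,x}$, where the PID property is automatic, and then spread $P_0$, $Q_0$ and the identity $P_0MQ_0=\mathrm{diag}(\alpha_1,\dots,\alpha_n)$ out to a smaller affine neighborhood by inverting the finitely many denominators and the two determinants. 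The step you flag as the main obstacle is in fact harmless: $\cO(U)\to\cO_{X,x}$ is an injection of domains, so identities that hold in the stalk already hold on $U$, and the locus where $\det P_0\det Q_0$ is nonvanishing is open. What your detour buys is robustness for the general curve of this section: for a curve of positive genus over, say, $\C$, no affine open $U$, however small, has $\cO(U)$ a PID, because $\Pic(U)$ is the quotient of $\Pic(X)$ by the finitely generated subgroup generated by the classes of the points of $X\setminus U$, and the divisible part coming from the Jacobian survives. So the paper's opening claim that $\cO(U)$ is a PID is delicate for general $X$ (it is unproblematic for $X=\P^1$, the case used in the rest of the paper, or if one replaces $\cO(U)$ by the local or semi-local ring, exactly as you do), and your local-then-spread argument is the more careful justification of the proposition as stated.
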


\begin{proof}
The Hecke modification $[\E' \xrightarrow[r]{x} \E]$ can be represented by a short exact sequence of coherent sheaves:
     \[      0 \longrightarrow \E' \overset{\varphi}{\longrightarrow}  \E  \longrightarrow \cK_x^{\oplus r} \longrightarrow 0.  
    \] 
Let $U$ be a sufficiently small affine neighborhood of $x$ such that $\E(U)$ and $\E'(U)$ are free modules over the principal ideal domain $\cO(U)$ and let $\pi_x $ be a uniformizer for $\cO_x$ in $\cO(U)$.

Evaluating the above exact sequence on $U$, yields a short exact sequence of $\cO(U)$-modules:
      \[      0 \longrightarrow \E'(U) \overset{\varphi(U)}{\longrightarrow}  \E(U)  \longrightarrow \kappa(x)^{\oplus r} \longrightarrow 0.\]
  Let $\mathrm{Mat}_n(\cO(U))$ be the $\cO(U)$-module consisting of square matrices of order $n$ with entries in $\cO(U)$.  By the Elementary Divisor Theorem, there are bases for $\E'(U)$ and $\E(U)$ such that the matrix $\varphi(U) \in \mathrm{Mat}_n(\cO(U))$ admits a Smith Normal Form $\mathrm{SNF}(\varphi(U))\in \mathrm{Mat}_n(\cO(U))$. Namely, 
    $$\mathrm{SNF}(\varphi(U))= \begin{pmatrix}
    \alpha_1 & 0 & 0 & \cdots & 0\\
    0 & \alpha_2 & 0 & \cdots & 0 \\
    \vdots & & & \cdots &\vdots \\
    0 & 0 & 0 & \cdots & \alpha_n
\end{pmatrix}=: \mathrm{diag}(\alpha_1, \dots , \alpha_n) \in \mathrm{Mat}_n(\cO(U)),$$
where $\alpha_i | \alpha_{i+1}$ for $i = 1, \dots , n-1$. Under these conditions
\[\kappa(x)^{\oplus r}=\mathrm{coker}(\varphi(U))= \dfrac{\E(U)}{\mathrm{Im}(\varphi(U))} \cong \dfrac{\cO(U)^n}{\langle \alpha_1, \dots , \alpha_n \rangle} \cong \dfrac{\cO(U)}{\langle \alpha_1 \rangle} \oplus \cdots \oplus \dfrac{\cO(U)}{\langle \alpha_n \rangle}.\]
This implies that, up to multiplication by invertible elements, the values of $\alpha_i$ are given by:
\[ \alpha_i = \left\{\begin{array}{ll} 1, \ \ \ \  \text{if} \  i\leq n-r \\ \pi_x  \ \ \  \ \text{if} \  i>n-r.\end{array} \right. \]
Which is the desired conclusion.
\end{proof}

\begin{rem}
    In the same conditions of above proposition, the elements $\alpha_i \in \cO(U)$, known as elementary divisors of $\varphi(U)$, are uniquely determined by
\[\alpha_1 \alpha_2 \cdots \alpha_i = \gcd(\text{minors } i\times i \text{ of } \varphi(U)),\]
cf. \cite[Thm 2.4]{SNF}
\end{rem}

\begin{rem}\label{blocks}
    Let $\E, \E' \in \Bun_n(X)$ and $[\E' \xrightarrow[r]{x} \E]$ be a Hecke modification. Suppose that there exist $\E_1', \E_1 \in \Bun_{n_1}(X)$ and $\E_2', \E_2 \in \Bun_{n_2}(X)$ such that $$\E'= \E_1' \oplus \E_2' \ \ \text{and} \ \  \E=\E_1 \oplus \E_2.$$
    Let $\Phi$ denote the set of morphisms $\varphi: \E' \to \E$ satisfying $\mathrm{coker}(\varphi)=\cK_x^{\oplus r}$. We can express an element $\varphi \in \Phi$ in block matrix form as follows:
    \[\varphi = \begin{pmatrix}
        A_{n_1 \times n_1}: \E_1' \to \E_1 & B_{n_1 \times n_2}: \E_2' \to \E_1 \\
        C_{n_2 \times n_1}: \E_1' \to \E_2 & D_{n_2 \times n_2}: \E_2' \to \E_2
    \end{pmatrix}.\]
\end{rem}

\begin{prop}\label{break0}
    Let $\E', \E' \in \Bun_n(X)$ be such that 
    \[ \E=\E_1 \oplus \E_2  \quad \text{and} \quad  \E'=\E_1' \oplus \E_2',\]
    with $\rk(\E_1)= \rk(\E_1')=n_1$ and $\rk(\E_2)= \rk(\E_2')=n_2$. Let $[\E_1'\oplus\E_2' \xrightarrow[r]{x} \E_1 \oplus \E_2]$ be a Hecke modification. Then there exist integers $r_1 \in \{0, 1, \dots, r\}$ and $r_2=r-r_1$ such that $ [\E_1' \xrightarrow[r_1]{x} \E_1]$ and $[\E_2' \xrightarrow[r_2]{x} \E_2]$  are Hecke modifications if and only if (in the notation of Remark \ref{blocks}) there exists $\varphi \in \Phi$ such that, in the block matrix representation,   $B=0_{n_1 \times n_2}$.
\end{prop}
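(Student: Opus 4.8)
The plan is to prove the two implications separately; the substantive content lies entirely in the direction that starts from a morphism with vanishing upper-right block, while the reverse direction is an immediate construction.

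For the implication ``only if'', I would argue as follows. Suppose $[\E_1'\xrightarrow[r_1]{x}\E_1]$ and $[\E_2'\xrightarrow[r_2]{x}\E_2]$ are Hecke modifications with $r_1+r_2=r$, realized by injective morphisms $A\colon\E_1'\to\E_1$ and $D\colon\E_2'\to\E_2$ whose cokernels are $\cK_x^{\oplus r_1}$ and $\cK_x^{\oplus r_2}$. I would then simply take $\varphi=\mathrm{diag}(A,D)$, so that $B=C=0$. This $\varphi$ is injective, and $\mathrm{coker}(\varphi)\cong\mathrm{coker}(A)\oplus\mathrm{coker}(D)\cong\cK_x^{\oplus r}$, whence $\varphi\in\Phi$ with $B=0$.

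For the converse (``if''), I would start from $\varphi\in\Phi$ with $B=0$, so that in block form $\varphi(v_1,v_2)=(Av_1,\,Cv_1+Dv_2)$. The key observation is that $B=0$ forces $\varphi$ to respect the evident filtrations: since $\varphi(0,v_2)=(0,Dv_2)$, the morphism $\varphi$ carries the subsheaf $\E_2'\subseteq\E'$ into $\E_2\subseteq\E$, and the induced map on the quotients $\E'/\E_2'=\E_1'\to\E/\E_2=\E_1$ is exactly $A$. This produces a commutative diagram with exact rows
\[\begin{array}{ccccccccc}
0 & \to & \E_2' & \to & \E' & \to & \E_1' & \to & 0\\
 & & \downarrow D & & \downarrow \varphi & & \downarrow A & &\\
0 & \to & \E_2 & \to & \E & \to & \E_1 & \to & 0,
\end{array}\]
to which I would apply the snake lemma. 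To run it I first note that $\mathrm{coker}(\varphi)=\cK_x^{\oplus r}$ is torsion, so $\varphi$ is an isomorphism at the generic point; as the block form is triangular, $\det\varphi=\det A\cdot\det D$ there, forcing $A$ and $D$ to be generically isomorphisms, hence injective with torsion cokernels supported at $x$. Then $\ker A=\ker D=\ker\varphi=0$ and the snake lemma collapses to
\[0\to\mathrm{coker}(D)\to\cK_x^{\oplus r}\to\mathrm{coker}(A)\to0.\]
Finally I would observe that any subsheaf or quotient of $\cK_x^{\oplus r}$ is annihilated by $\fm_x$ and supported at $x$, hence isomorphic to some $\cK_x^{\oplus s}$; thus $\mathrm{coker}(A)\cong\cK_x^{\oplus r_1}$ and $\mathrm{coker}(D)\cong\cK_x^{\oplus r_2}$, and additivity of length in the sequence above gives $r_1+r_2=r$ with $r_1\in\{0,\dots,r\}$. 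Hence $A$ and $D$ realize $[\E_1'\xrightarrow[r_1]{x}\E_1]$ and $[\E_2'\xrightarrow[r_2]{x}\E_2]$, as required.

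The main obstacle I anticipate is not the homological algebra but the bookkeeping that turns $B=0$ into an honest morphism of short exact sequences: one must verify that $A$ is literally the map induced on quotients (not merely a map of matching rank), which the explicit block computation above supplies. The second, slightly subtler point to state carefully is that the two cokernels are again sums of copies of $\cK_x$ rather than longer torsion modules such as $\cO/\fm_x^2$; this follows because a subquotient of an $\fm_x$-annihilated sheaf is itself $\fm_x$-annihilated. Once both are in place, the snake lemma and additivity of length finish the argument.
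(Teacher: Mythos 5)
Your proof is correct, and it takes a genuinely different route from the paper's. The paper argues locally: it restricts to a small affine neighborhood $U$ of $x$ on which all the bundles become free over the principal ideal domain $\cO(U)$, and then works with Smith Normal Forms. In the forward direction it computes $\mathrm{SNF}(\varphi(U))$ of the block-diagonal matrix directly; in the converse direction it runs a gcd-of-minors analysis on minors of $\varphi(U)$ assembled from minors $M_A$ of $A(U)$ together with the full block $D(U)$, deducing the divisibility $\pi_x^{r_1-i_1}\mid\det(M_A)$ and hence that the elementary divisors of $A(U)$ are $1,\dots,1,\pi_x,\dots,\pi_x$; that elementary-divisor computation is exactly what excludes cokernels with higher torsion such as $\cO(U)/\langle\pi_x^2\rangle$. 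You replace all of this by a global homological argument: $B=0$ means $\varphi$ carries the subsheaf $\E_2'\subseteq\E'$ into $\E_2\subseteq\E$ with induced map $A$ on the quotients, injectivity of $A$ and $D$ follows from the generic-fibre determinant computation (or even directly from the snake sequence, since a torsion-free kernel cannot inject into a torsion sheaf), and the snake lemma yields $0\to\mathrm{coker}(D)\to\cK_x^{\oplus r}\to\mathrm{coker}(A)\to 0$. Your observation that any subsheaf or quotient of $\cK_x^{\oplus r}$ is annihilated by $\fm_x$, hence again of the form $\cK_x^{\oplus s}$, does precisely the work of the paper's minor analysis, and more transparently; additivity of length then gives $r_1+r_2=r$. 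What each approach buys: yours is shorter, coordinate-free, and avoids choosing $U$ and bases; the paper's produces explicit local matrix normal forms, which is the computational technology it sets up and reuses in the surrounding results. One minor infelicity: at the determinant step you already assert that the cokernels of $A$ and $D$ are supported at $x$, whereas at that point you only know they are torsion; the support statement (and the stronger $\fm_x$-annihilation) is justified only once the snake sequence exhibits them as a subsheaf and a quotient of $\cK_x^{\oplus r}$ --- which your final step supplies, so nothing is actually missing.
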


\begin{proof}
Suppose that $[\E_1' \xrightarrow[r_1]{x} \E_1]$ and $[\E_2' \xrightarrow[r_2]{x} \E_2]$ are Hecke modifications given by morphisms $\varphi_i:\E_i' \to \E_i$, $i=1,2$. Define $\varphi:\E'\to \E$ given by the following matrix representation in blocks 
\[\varphi = \begin{pmatrix}
        \varphi_1 & 0 \\
        0 & \varphi_2
    \end{pmatrix}.\]

Let $U$ be any sufficiently small affine neighborhood of $x$ such that $\E(U),\E_1(U),\\ \E_2(U), \E'(U), \E'_1(U)$ and $\E'_2(U)$ are free modules over a principal ideal domain. Hence, any $i \times i$ minor of the matrix $\varphi(U)$ with nonzero determinant can be represented as
\[\varphi(U) = \begin{pmatrix}
        M_{i_1 \times i_1} & 0_{i_1 \times i_2} \\
        0_{i_2 \times i_1} & N_{i_2 \times i_2}
    \end{pmatrix},\]
where $i=i_1+i_2$, $M$ is a minor $i_1 \times i_1$ of $\varphi_1(U)$ and $N$ a minor $i_2 \times i_2$ of $\varphi_2(U)$. 

Since
\[\mathrm{SNF}(\varphi_1(U))=\mathrm{diag}(1, \dots, 1, \underbrace{\pi_x, \ldots, \pi_x}_{r_1 \text{ times }})\] 
and 
\[ \mathrm{SNF}(\varphi_2(U))=\mathrm{diag}(1, \dots, 1, \underbrace{\pi_x, \ldots, \pi_x}_{r_2 \text{ times }}),\]
we conclude that 
\[\mathrm{SNF}(\varphi(U))=\mathrm{diag}(1, \dots, 1, \underbrace{\pi_x, \ldots, \pi_x}_{r_1+r_2 \text{ times }}). \]
This implies that $\mathrm{coker}(\varphi)=\cK_x^{\oplus r}$, which yields a Hecke modification $[\E' \xrightarrow[r]{x} \E]$.

Next, let  $[\E' \xrightarrow[r]{x} \E]$ be a Hecke modification defined by the morphism $\varphi: \E' \to \E$.  Through a block matrix representation, as described in Remark \ref{blocks}, $B=0$ and $n_i$ is the rank of $\E_i$, $i=1,2$. Thus, up to associates,
\[\det(\varphi(U))=\det (\mathrm{SNF}(\varphi(U)))= \pi_x^r = \det(A(U)) \det(D(U)).\]
Since $\pi_x$ is irreducible, there are non-negative integers $r_1,r_2 \in \Z$, $r_1 \leq r$ and $r_2=r-r_1$, such that $\det(A(U))= \pi_x^{r_1}$ and $\det(D(U))=\pi_x^{r_2}$.
Thus \[\mathrm{SNF}(\varphi(U))=\mathrm{diag}(1, \dots, 1, \underbrace{\pi_x, \ldots, \pi_x}_{r \text{ times }}).\]
Hence, for $i\in \{0, \dots, r\}$, the $(n-i)$-th elementary divisor of $\varphi(U)$, 
\begin{equation}\label{determinantal}
    \alpha_{n-i}=\gcd(\{ \text{ minors } (n-i)\times (n-i) \text{ of } \varphi(U) \}) = \pi_x^{r-i}.
\end{equation}

Let $\alpha^A_{i_1}$ and $\alpha^D_{i_2}$ denote the elementary divisors of $A(U)$ and $D(U)$, respectively, with $i_1 \in \{1, \dots, n_1\}$ and $i_2 \in \{1, \dots, n_2\}$.
Given $i_1 \in \{0, \dots, r_1\}$, let $M_A$  be a minor $(n_1-i_1) \times (n_1-i_1)$ of the matrix $A(U)$. Let $M$ be a minor $(n-i_1)\times (n-i_1)$ of $\varphi(U)$ in the form
\[M = \begin{pmatrix}
        M_A & 0 \\
        \Tilde{C}(U) & D(U)
    \end{pmatrix}.\]
Here, $\Tilde{C}(U)$ represents a submatrix of $C(U)$ related with the choice of rows of $M_A$. The determinant of $M$ is given by $\det(M)=\det(M_A) \det(D(U))=\pi_x^{r_2} \det(M_A)$. Consequently, identity (\ref{determinantal}) yields $\alpha_{n-i_1}=\pi_x^{r-i_1}$, which must divides $\det(M)$. Hence, 
\[\pi_x^{r-i_1} \mid \pi_x^{r_2} \det(M_A) \ \ \ \text{meaning} \ \  \pi_x^{r_1-i_1}\mid\det(M_A).\]

Since $\det(A(U))\neq 0$, for all $i_1 \in \{1, \dots, r_1\}$ there is a minor $M_A$ with determinant non-zero. Then, using the formula of elementary divisor, we conclude that, except by associates
\[\alpha_{n_1-i_1}^A=\left\{\begin{array}{ll} \pi_x^{r_1-i_1}, &  \text{if } i_1 \in \{0, \dots , r_1\} \\ 
1,  &  \text{if} \  i_1 \in \{r_1+1 \dots , n_1-1\}.\end{array} \right.  \]
Therefore, $A$ represents a morphism $\varphi_1:\E_1' \to \E_1$ with $\mathrm{coker}(\varphi_1)=\cK_X^{\oplus r_1}$, realizing a Hecke modification $[\E_1' \xrightarrow[r_1]{x} \E_1]$. Analogously, we obtain $[\E_2' \xrightarrow[r_2]{x} \E_2]$.
\end{proof}

\begin{lemma}\label{semistzero}
    Let $\cE$ and $\cF$ be semistable vector bundles on a curve $X$. If $\mu(\cE)>\mu(\cF)$, then $\Hom(\cE, \cF)=0$.
\end{lemma}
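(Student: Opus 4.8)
The plan is to argue by contradiction. Suppose $\Hom(\cE,\cF)\neq 0$ and fix a nonzero morphism $\varphi\colon\cE\to\cF$. The central object will be its image $\cI:=\operatorname{im}(\varphi)$, which I will squeeze between the two semistability bounds in order to force $\mu(\cE)\leq\mu(\cF)$, contradicting the hypothesis.

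First I would record the basic properties of $\cI$. It is a nonzero coherent subsheaf of $\cF$; since $\cF$ is locally free on a curve it is torsion-free, so $\cI$ is torsion-free as well, and being nonzero it has positive rank. Hence $\mu(\cI)=\deg(\cI)/\rk(\cI)$ is well defined, which is what makes the slope comparisons below meaningful.

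Next I would use the two faces of semistability. Viewing $\cI$ as a \emph{quotient} of $\cE$ via the canonical surjection $\cE\twoheadrightarrow\cI$, I invoke the quotient formulation of semistability: for any short exact sequence $0\to\cK\to\cE\to\cQ\to 0$ with $\rk(\cK),\rk(\cQ)>0$, the slope $\mu(\cE)=\bigl(\deg(\cK)+\deg(\cQ)\bigr)/\bigl(\rk(\cK)+\rk(\cQ)\bigr)$ is the mediant of $\mu(\cK)$ and $\mu(\cQ)$ and therefore lies between them; consequently the subsheaf bound $\mu(\cK)\leq\mu(\cE)$ coming from semistability of $\cE$ is equivalent to the quotient bound $\mu(\cE)\leq\mu(\cQ)$. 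Applying this with $\cQ=\cI$ gives $\mu(\cE)\leq\mu(\cI)$. On the other hand, viewing $\cI$ as a \emph{subsheaf} $\cI\subseteq\cF$ and using semistability of $\cF$ directly yields $\mu(\cI)\leq\mu(\cF)$.

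Combining the two inequalities produces the chain $\mu(\cE)\leq\mu(\cI)\leq\mu(\cF)$, contradicting $\mu(\cE)>\mu(\cF)$. Hence no nonzero $\varphi$ exists and $\Hom(\cE,\cF)=0$. The only point that requires any care — the closest thing to an obstacle — is the passage between the subsheaf and quotient formulations of semistability: one must verify the mediant inequality for slopes in a short exact sequence and confirm that $\cI$ is a torsion-free sheaf of positive rank so that every slope in the chain is defined. Both facts are elementary on a smooth curve, so the argument is short once they are in place.
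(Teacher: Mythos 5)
Your argument is correct, but note that the paper offers no proof of this lemma at all: it simply cites \cite[Prop. 5.3.3]{lepotier}. What you wrote is the standard self-contained argument behind that citation: for a nonzero $\varphi\colon\cE\to\cF$, the image $\cI=\operatorname{im}(\varphi)$ is a nonzero subsheaf of the locally free sheaf $\cF$, hence torsion-free of positive rank, so its slope is defined; semistability of $\cE$ in the quotient formulation (equivalent to the subsheaf formulation via the mediant property of slopes in a short exact sequence) gives $\mu(\cE)\leq\mu(\cI)$, and semistability of $\cF$ gives $\mu(\cI)\leq\mu(\cF)$, contradicting $\mu(\cE)>\mu(\cF)$. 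Two points deserve one sentence each in a polished write-up, though neither is a real gap: (i) when $\varphi$ is injective the kernel has rank zero, so the quotient formulation as you stated it (which assumes $\rk(\cK),\rk(\cQ)>0$) does not literally apply, but then $\cE\cong\cI$ and $\mu(\cE)=\mu(\cI)$ holds trivially; (ii) if semistability of $\cF$ is taken to be stated only for saturated subsheaves, one passes to the saturation of $\cI$, whose degree is at least $\deg(\cI)$ at equal rank. As for what each route buys: your proof keeps the lemma self-contained and works verbatim over an arbitrary base field, which is exactly the generality the paper needs ($X$ is a curve over an arbitrary $k$), whereas the paper's citation buys brevity at the cost of deferring to an external reference.
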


\begin{proof} See \cite[Prop 5.3.3]{lepotier}. \end{proof}

\begin{thm}\label{breaksemistable} 
 Let $x \in X$ be a closed point and $\E, \E'\in \Bun_n(X)$. Write 
 $ \E:=\bigoplus_{i=1}^{m_1}\F_i$ and  $\E':=\bigoplus_{i=1}^{m_2}\F'_i$
 where $\cF_i$ and $\cF'_i$ are semistable bundles with $\mu(\cF_i) \leq \mu(\cF_{i+1})$ and $\mu(\cF_j ') \leq \mu(\cF_{j+1}')$ for $i= 1, \ldots, m_1$ and $j=1, \ldots, m_2$. Suppose that there exist $a,b \in \Z_{>0}$ such that
\[\E_1=\bigoplus_{i=1}^{a-1}\cF_i, \ \ \E_2=\bigoplus_{i=a}^{m_1}\cF_i, \ \ \E_1'=\bigoplus_{i=1}^{b-1}\cF_i'\ \ \text{and} \ \ \E_2'=\bigoplus_{i=b}^{m_2}\cF_i'\]
with $\rk(\E_1')=\rk(\E_1)$ and
$\mu(\cF_j')>\mu(\cF_i)$ for $j = b, \dots, m_2$ and $i = 1, \dots, a-1$.
Then $[\E' \xrightarrow[r]{x} \E]$ is a Hecke modification if and only if $[\E_1' \xrightarrow[r_1]{x} \E_1]$ and $[\E_2' \xrightarrow[r_2]{x} \E_2]$ are Hecke modifications, where $r_1 :=(\deg(\E_1')-\deg(\E_1))/|x|$ and $r_2 :=r-r_1$.
\end{thm}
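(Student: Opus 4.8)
The plan is to deduce the statement from Proposition \ref{break0}, using the slope gap between $\E_2'$ and $\E_1$ to force the obstructing off-diagonal block of any realizing morphism to vanish automatically. First I would verify that the hypotheses of Proposition \ref{break0} are in force: since $\rk(\E)=\rk(\E')=n$ and $\rk(\E_1')=\rk(\E_1)=:n_1$, it follows that $\rk(\E_2')=n-n_1=\rk(\E_2)=:n_2$, so both decompositions $\E=\E_1\oplus\E_2$ and $\E'=\E_1'\oplus\E_2'$ have matching ranks in each factor.

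For the ``if'' direction I would invoke the block-diagonal construction from the first half of the proof of Proposition \ref{break0}: given morphisms $\varphi_i:\E_i'\to\E_i$ realizing $[\E_i'\xrightarrow[r_i]{x}\E_i]$ for $i=1,2$, the morphism $\varphi=\mathrm{diag}(\varphi_1,\varphi_2)$ has $\mathrm{SNF}(\varphi(U))=\mathrm{diag}(1,\dots,1,\pi_x,\dots,\pi_x)$ with $r_1+r_2=r$ copies of $\pi_x$, hence cokernel $\cK_x^{\oplus r}$, realizing $[\E'\xrightarrow[r]{x}\E]$. This direction uses neither semistability nor the slope hypothesis.

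The crux is the ``only if'' direction, and there the slope hypothesis does all the work. Let $\varphi:\E'\to\E$ realize $[\E'\xrightarrow[r]{x}\E]$, and write it in block form relative to the two decompositions as in Remark \ref{blocks}, with off-diagonal block $B:\E_2'\to\E_1$. Expanding $\Hom(\E_2',\E_1)=\bigoplus_{j=b}^{m_2}\bigoplus_{i=1}^{a-1}\Hom(\F_j',\F_i)$, each summand has $\mu(\F_j')>\mu(\F_i)$ with $\F_j'$ and $\F_i$ semistable, so Lemma \ref{semistzero} gives $\Hom(\F_j',\F_i)=0$; hence $\Hom(\E_2',\E_1)=0$ and $B=0$ for this (indeed for every) morphism $\E'\to\E$. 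With $B=0$ in hand, Proposition \ref{break0} immediately produces Hecke modifications $[\E_1'\xrightarrow[r_1]{x}\E_1]$ and $[\E_2'\xrightarrow[r_2]{x}\E_2]$ for some $r_1,r_2$ with $r_1+r_2=r$.

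It then remains to match these weights with those in the statement. Since $\E_1/\E_1'\cong\cK_x^{\oplus r_1}$ and $\deg\cK_x=|x|$, comparing degrees in $0\to\E_1'\to\E_1\to\cK_x^{\oplus r_1}\to 0$ forces $r_1|x|=\deg(\E_1)-\deg(\E_1')$, which pins $r_1$ to the value prescribed in the theorem and gives $r_2=r-r_1$. I expect the only genuine subtleties to be this degree bookkeeping and the asymmetry of the argument: the slope hypothesis constrains only the block $B$, whereas the block $C:\E_1'\to\E_2$ is left entirely free — which is precisely why Proposition \ref{break0} demands only $B=0$ rather than a fully block-diagonal $\varphi$.
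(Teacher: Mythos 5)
Your proposal is correct and follows essentially the same route as the paper: use Lemma \ref{semistzero} to force the block $B:\E_2'\to\E_1$ of any realizing morphism to vanish, then conclude via Proposition \ref{break0}. You are in fact slightly more careful than the paper's own two-line proof, since you explicitly note that the ``if'' direction rests on the block-diagonal construction inside the proof of Proposition \ref{break0} (whose statement presupposes $[\E' \xrightarrow[r]{x} \E]$), and you pin down $r_1$ by degree additivity, both of which the paper leaves implicit.
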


\begin{proof} Let $\varphi: \E' \to \E$ be a morphism realizing $[\E' \xrightarrow[r]{x} \E]$. We might write $\varphi = (\varphi_{ij})_{ij}$, where $\varphi_{ij} \in \Hom(\cF_j', \cF_i)$. According to Lemma \ref{semistzero}, $\Hom(\cF_{j}',\cF_i) = \{0\}$ for all $j= b, \dots, m_2$ and $i = 1, \dots, a-1$. Thus, $\varphi$ might be written as
   \[\varphi = \begin{pmatrix}
        A: \E_1' \to \E_1 & 0: \E_2' \to \E_1 \\
        C: \E_1' \to \E_2 & D: \E_2' \to \E_2
    \end{pmatrix},\]
and the theorem follows from Theorem \ref{break0}.
\end{proof}

\section{Sums of line bundles}

In this section, we explore the Hecke modifications of vector bundles given as sums of line bundles. As in the previous section, we let \( X \) be a smooth projective curve defined over an arbitrary field \( k \). 

\begin{rem}\label{padrao} Let  $\E, \E' \in \Bun_n(X)$ be isomorphic to the direct sum of line bundles i.e., 
\[  \E \cong \bigoplus_{i=1}^n\cL_i 
\quad \text{and} \quad
\E' \cong \bigoplus_{i=1}^n\cL_i'
\]
with $\cL_i, \cL_i' \in \Pic X$,  $\deg(\cL_i) \leq \deg(\cL_{i+1})$ and $\deg(\cL_i') \leq \deg(\cL_{i+1}')$, $i=1, \dots , n-1$. If $[\E' \xrightarrow[r]{x} \E]$, follows from \cite[Prop. 4.12]{alvarenga19} that we can consider $\deg(\cL_i') \leq \deg(\cL_i)$, for $i=1, \ldots,n$. 
In what follows, we will consistently work within this setting when considering Hecke modifications of direct sums of line bundles. Moreover, we denote $\epsilon_i := \deg(\cL_i) - \deg(\cL_i')$ for each $i \in \{1, \ldots, n\}$. 
\end{rem}

\begin{lemma}\label{dual}
    Let $\E, \E' \in \Bun_n(X)$. Then for every
short exact  sequence of the form
 \[0 \longrightarrow \E' \longrightarrow \E \longrightarrow  \cK_x^{\oplus r} \longrightarrow 0, \]
there exists a canonical short exact sequence
 \[0 \longrightarrow \E \longrightarrow \E'(x) \longrightarrow  \cK_x^{\oplus n- r} \longrightarrow 0.\]
where $\E'(x) := \E' \otimes_{\cO} \cO(x)$.
\end{lemma}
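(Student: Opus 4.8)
The plan is to realize both $\E$ and $\E'(x)$ as subsheaves of a common ambient sheaf and to read off the second sequence from the first by a purely local computation at $x$. The canonical global section of $\cO(x)$ cutting out the divisor $x$ gives a canonical inclusion $\cO \hookrightarrow \cO(x)$; tensoring with the locally free sheaves involved produces canonical injections $\iota_\E\colon \E \hookrightarrow \E(x)$ and $\iota_{\E'}\colon \E' \hookrightarrow \E'(x)$, while tensoring the morphism $\varphi\colon \E' \to \E$ realizing $[\E'\xrightarrow[r]{x}\E]$ with the line bundle $\cO(x)$ yields an injection $\varphi(x)\colon \E'(x)\hookrightarrow \E(x)$ (injectivity is preserved because $\cO(x)$ is locally free, hence flat). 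One then views $\E$ and $\E'(x)$ as subsheaves of $\E(x)$, and the lemma follows once I show $\E \subseteq \E'(x)$ inside $\E(x)$ with quotient $\cK_x^{\oplus n-r}$.

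First I would observe that, away from $x$, both $\varphi$ and the section $\cO\to\cO(x)$ are isomorphisms, so all four maps above are isomorphisms on $X\setminus\{x\}$. Consequently the whole comparison takes place at $x$, and any cokernel produced is automatically a skyscraper supported at $x$. To analyse the situation there I would pass to a sufficiently small affine neighborhood $U$ on which $\E(U),\E'(U)$ are free over the principal ideal domain $\cO(U)$ and invoke the Smith Normal Form proposition above: in a suitable basis $f_1,\dots,f_n$ of $\E(U)$ one has $\varphi(U)=\mathrm{diag}(1,\dots,1,\pi_x,\dots,\pi_x)$ with $\pi_x$ occurring $r$ times. Identifying $\E'$ with its image $\varphi(\E')$ and writing the three stalks in this basis gives
\[
\E'_x=\langle f_1,\dots,f_{n-r},\pi_x f_{n-r+1},\dots,\pi_x f_n\rangle
\ \subseteq\ \E_x=\langle f_1,\dots,f_n\rangle
\ \subseteq\ \E'(x)_x=\pi_x^{-1}\E'_x,
\]
where the last inclusion holds because $f_i=\pi_x\cdot(\pi_x^{-1}f_i)$ for $i\le n-r$, while $f_i$ itself lies in $\E'(x)_x$ for $i>n-r$.

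With the inclusion $\E\subseteq\E'(x)$ in hand, I would define $\psi\colon\E\to\E'(x)$ as the induced injection, equivalently the unique map with $\varphi(x)\circ\psi=\iota_\E$, which exists since $\mathrm{im}(\iota_\E)\subseteq\mathrm{im}(\varphi(x))$ and $\varphi(x)$ is injective; being built only from $\cO\hookrightarrow\cO(x)$ and $\varphi$, it is canonical. Finally I would compute $\mathrm{coker}(\psi)$ from the local description: $\E'(x)_x/\E_x$ is generated by the classes of $\pi_x^{-1}f_1,\dots,\pi_x^{-1}f_{n-r}$, each annihilated by $\pi_x$, so it is isomorphic to $\kappa(x)^{\oplus n-r}$, whence the global cokernel is $\cK_x^{\oplus n-r}$. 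This yields the asserted sequence $0\to\E\to\E'(x)\to\cK_x^{\oplus n-r}\to 0$. The only genuine work is the local verification of the inclusion $\E\subseteq\E'(x)$ together with the exact length of the quotient; this is precisely where the Smith Normal Form is indispensable, since matching degrees alone (which only gives $\deg\E'(x)-\deg\E=(n-r)|x|$) would not by itself guarantee that $\E$ actually embeds in $\E'(x)$. Everything else — exactness off $x$ and canonicity — is formal.
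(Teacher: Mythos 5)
Your proof is correct, but it is necessarily a different route from the paper's, because the paper gives no argument at all: its proof of Lemma \ref{dual} is a bare citation of \cite[Lemma 2.1]{rov-elliptic1}. What you provide instead is a self-contained verification built on the Smith Normal Form proposition established earlier in the paper, and it checks out: the three canonical injections are set up correctly; the reduction to the stalk at $x$ is legitimate (all maps involved are isomorphisms off $x$, so any cokernel is the skyscraper determined by its stalk); the chain $\E'_x \subseteq \E_x \subseteq \pi_x^{-1}\E'_x = \E'(x)_x$ and the identification of the quotient with $\kappa(x)^{\oplus (n-r)}$ are exactly right; and canonicity survives because SNF enters only to \emph{verify} the containment $\mathrm{im}(\iota_\E)\subseteq\mathrm{im}(\varphi(x))$, while $\psi$ itself is pinned down by $\varphi(x)\circ\psi=\iota_\E$ with no choices made. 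The one claim I would soften is that SNF is ``indispensable'': the containment also follows formally. Since $\cO(x)$ is invertible, tensoring the given sequence shows $\E'(x)=\ker(\E(x)\to(\E/\E')(x))$, and the composite $\E\to\E(x)\to(\E/\E')(x)$ factors as $\E\to\E/\E'\cong\cK_x^{\oplus r}$ followed by multiplication by the canonical section of $\cO(x)$; that section vanishes at $x$, and $\cK_x^{\oplus r}$ is annihilated by the maximal ideal at $x$, so the composite is zero and $\E\subseteq\E'(x)$. The quotient is then read off from the exact sequence $0\to\E'(x)/\E\to\E(x)/\E\to\E(x)/\E'(x)\to 0$, whose right-hand terms are $\cK_x^{\oplus n}$ and $\cK_x^{\oplus r}$, i.e.\ $\kappa(x)$-vector spaces of dimensions $n$ and $r$. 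Your route buys explicitness and stays entirely inside the paper's toolkit; the formal route buys independence from the choice of an affine neighborhood $U$ with $\cO(U)$ a principal ideal domain, a hypothesis your proof (like the paper's SNF proposition itself) quietly relies on.
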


\begin{proof}
    See \cite[Lemma 2.1]{rov-elliptic1}.
\end{proof}

\begin{thm}\label{menorqued} Let
     \[0 \longrightarrow \cL_1' \oplus \cdots \oplus \cL_n '\longrightarrow \cL
_1 \oplus \cdots \oplus \cL_n \longrightarrow \cK_x^{\oplus r} \longrightarrow 0\]
be a Hecke modification as in Remark \ref{padrao}. Then $0 \leq \epsilon_i \leq |x|$, for all $i \in \{1, \ldots , n\}$.
\end{thm}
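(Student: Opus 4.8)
The plan is to establish the two inequalities $0\le\epsilon_i$ and $\epsilon_i\le|x|$ separately, deriving the upper bound by dualizing the given sequence. The lower bound $\epsilon_i\ge 0$ needs no argument here: it is exactly the normalization encoded in Remark \ref{padrao}, which allows us to arrange $\deg(\cL_i')\le\deg(\cL_i)$ once both the sub-bundle and the ambient bundle are written as sums of line bundles sorted by increasing degree. So the entire content of the statement is the upper bound $\epsilon_i\le|x|$, and that is what I would focus on.

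For the upper bound, first I would apply Lemma \ref{dual} to the given Hecke modification to produce the canonical short exact sequence
\[0\longrightarrow \E\longrightarrow \E'(x)\longrightarrow \cK_x^{\oplus n-r}\longrightarrow 0,\]
where $\E'(x)=\bigoplus_{i=1}^n\cL_i'(x)$ with $\cL_i'(x):=\cL_i'\otimes\cO(x)$ of degree $\deg(\cL_i')+|x|$. The key observation is that this dual sequence is again a Hecke modification between sums of line bundles, and that its summands are already correctly sorted: on the left $\deg(\cL_1)\le\cdots\le\deg(\cL_n)$ by hypothesis, while on the right $\deg(\cL_1'(x))\le\cdots\le\deg(\cL_n'(x))$ since twisting by $\cO(x)$ shifts every degree by the same amount $|x|$ and hence preserves the ordering of the $\cL_i'$. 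I would then invoke Remark \ref{padrao} (which rests on \cite[Prop.~4.12]{alvarenga19}) a second time, now with $\E$ in the role of the sub-bundle and $\E'(x)$ in the role of the ambient bundle. The termwise comparison of sorted degrees yields $\deg(\cL_i)\le\deg(\cL_i'(x))=\deg(\cL_i')+|x|$ for every $i$, that is $\epsilon_i=\deg(\cL_i)-\deg(\cL_i')\le|x|$, which together with the lower bound gives $0\le\epsilon_i\le|x|$.

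An equivalent route avoids Lemma \ref{dual} and uses instead the Smith Normal Form established above: since $\mathrm{SNF}(\varphi(U))=\mathrm{diag}(1,\dots,1,\pi_x,\dots,\pi_x)$, the cokernel is killed by $\pi_x$, so $\pi_x\E\subseteq\varphi(\E')$ and hence $\E(-x)\subseteq\E'\subseteq\E$ globally. As $\E(-x)=\bigoplus_i\cL_i(-x)$ has sorted degrees $\deg(\cL_i)-|x|$ and the inclusion $\E(-x)\subseteq\E'$ has cokernel $\cK_x^{\oplus n-r}$, Remark \ref{padrao} again gives $\deg(\cL_i)-|x|\le\deg(\cL_i')$, the same bound. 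I would present the Lemma \ref{dual} version as the primary argument since it is the most direct, but either makes the twisting structure explicit.

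The step I expect to be the main obstacle is the bookkeeping around the orderings: to apply the termwise inequality of Remark \ref{padrao} to the dual, one must verify that the line-bundle summands of both $\E$ and $\E'(x)$ are listed by increasing degree \emph{with compatible indexing}, and that Lemma \ref{dual} genuinely respects the direct-sum decomposition, so that $\E'(x)$ is presented as $\bigoplus_i\cL_i'(x)$ rather than under some other splitting. Once it is checked that $-\otimes\cO(x)$ is exact and preserves both the decomposition and the degree ordering, and that the cokernel $\cK_x^{\oplus n-r}$ is a genuine power of the skyscraper so that the dual sequence is a bona fide Hecke modification, the conclusion is immediate. As a consistency check one has $\sum_{i=1}^n\epsilon_i=\deg(\E)-\deg(\E')=\deg(\cK_x^{\oplus r})=r|x|$, which is compatible with the bounds $0\le\epsilon_i\le|x|$.
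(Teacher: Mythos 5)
Your proposal is correct, and its first move coincides with the paper's: both proofs begin by invoking Lemma \ref{dual} to replace the given sequence by the dual Hecke modification $[\E \xrightarrow[n-r]{x} \E'(x)]$, and both treat the lower bound $\epsilon_i \geq 0$ as already contained in Remark \ref{padrao}. Where you diverge is in how the degree comparison is then concluded. You apply the termwise inequality of Remark \ref{padrao} (i.e.\ \cite[Prop.~4.12]{alvarenga19}) as a black box to the dual modification, noting that twisting by $\cO(x)$ preserves the sorted ordering, and you are done in one line: $d_i \leq d_i' + |x|$. The paper does not do this. It extracts from Prop.~4.12 only a weaker consequence of injectivity --- that each summand $\cL_j'(x)$ of the ambient bundle receives a nonzero component from some $\cL_{\ell}$, forcing $d_{\ell} \leq d_j - \epsilon_j + |x|$ --- and then argues by contradiction: assuming $\epsilon_j > |x|$ for the minimal such $j$, it chases indices $\ell_1, \ell_2, \dots$, shows each new index is distinct from the previous ones and strictly smaller than $j$, and reaches a pigeonhole contradiction after $j-1$ steps. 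So your route is a genuine shortcut rather than a paraphrase of the paper's argument, and it is mathematically sound: the termwise inequality for sorted sums of line bundles is a true general statement (an injection with torsion cokernel cannot map $\bigoplus_{k\geq i}\cL_k$ into the lower-rank bundle $\bigoplus_{k>i}\cL_k'(x)$, which is what a violation would force). The one caveat worth recording is that your proof's validity rests entirely on reading Remark \ref{padrao} as a theorem applicable to \emph{every} Hecke modification between sorted sums of line bundles --- in particular to the dual one, of weight $n-r$ --- whereas the paper's longer argument deliberately avoids relying on that strong form, which suggests its authors view Prop.~4.12 as delivering something weaker; if their caution is warranted, the burden you identify yourself (justifying the second application of Remark \ref{padrao}) is exactly where your proof would need the paper's index-chasing argument, or the rank argument sketched above, inserted. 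Your secondary route via the Smith Normal Form, giving $\E(-x) \subseteq \E' \subseteq \E$, is also correct and is in fact the same dualization in disguise, since $\E(-x) \subseteq \E'$ is equivalent to $\E \subseteq \E'(x)$; and your consistency check $\sum_{i=1}^n \epsilon_i = r|x|$ is accurate.
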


\begin{proof}
    From Lemma $\ref{dual}$, we might consider the dual exact sequence
\[ 0 \longrightarrow \cL_1 \oplus \cdots \oplus \cL_n \longrightarrow \cL_1'(x) \oplus \cdots \oplus \cL_n'(x) \longrightarrow  \cK_x^{\oplus n-r} \longrightarrow 0. \]
Let $d :=|x|$ and $d_i :=\deg(\cL_i)$, with $i \in \{1, \ldots, n\}$.
 
 Suppose that $\epsilon_j > d $ for some $j \in \{1, \ldots, n\}$. Suppose moreover that $j$ is the smaller index such that $\epsilon_j > d $. Thus 
 \[ d_n \geq \cdots \geq d_{j+1} \geq d_j >  d_j-\epsilon_j + d = \deg(\cL_j'(x)).\]
The injectivity of the dual exact sequence implies, by \cite[Prop. 4.12]{alvarenga19},  that there is an index $\ell_1$, with $\ell_1 < j $ such that $d_j-\epsilon_j+d \geq d_{\ell_1}$. By the same reason there is a index $\ell_2 \neq \ell_1$ such that $\deg(\cL_{\ell_1}'(x))=d_{\ell_1}-\epsilon_{\ell_1}+d \geq d_{\ell_2}$.

If $\ell_2 \geq j$, 
\[ d_{\ell_1}-\epsilon_{\ell_1}+d \geq d_{\ell_2} \geq d_{\ell_2-1} \geq \cdots \geq d_{j} > d_j-\epsilon_j+d \geq d_{\ell_1}.\]
This means that $ d_{\ell_1}-\epsilon_{\ell_1}+d > d_j-\epsilon_j+d$. Hence $\deg(\cL_{\ell_1})> \deg(\cL_j)$, which is a contradiction since $\ell_1<j$. Therefore $\ell_2<j$. 

By repeatedly applying the same argument $j-1$ times, we obtain a set of distinct indices $J:= \{\ell_0=j, \ell_1, \ldots , \ell_{j-1} \}$, with
$d_{\ell_i}-\epsilon_{\ell_i}+d \geq d_{\ell_{i+1}}$, for $i \in \{0, \ldots j-2\}$. However, in order to proceed with the construction, the index $\ell_{j-1}$ must be compared with some index $\ell<j$ such that $d_{\ell_{j-1}}-\epsilon_{\ell_{j-1}}+d \geq d_{\ell},$ which is impossible since $\#J=j$. Therefore,  $\epsilon_j \leq d$.
\end{proof}


 \begin{cor}\label{break}
Let $ \E=\bigoplus_{i=1}^n\cL_i$ and $ \E'=\bigoplus_{i=1}^n\cL'_i$  as in Remark \ref{padrao}. Suppose that there exists an index $j$ such that $\deg(\cL_j)-\deg(\cL_{j-1})>|x|$. Let
\[\E_1=\bigoplus_{i=1}^{j-1}\cL_i, \ \ \E_2=\bigoplus_{i=j}^{n}\cL_i, \ \ \E_1'=\bigoplus_{i=1}^{j-1}\cL_i'\ \ \text{and} \ \ \E_2'=\bigoplus_{i=j}^{n}\cL_i',\]
$r_1=\dfrac{\deg(\E_1')-\deg(\E_1)}{|x|}$ and $r_2=r-r_1$. Then $[\E' \xrightarrow[r]{x} \E]$ if and only if $[\E_1' \xrightarrow[r_1]{x} \E_1]$ and $[\E_2' \xrightarrow[r_2]{x} \E_2]$.
\end{cor}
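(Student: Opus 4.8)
The plan is to deduce this directly from Theorem \ref{breaksemistable} by taking the semistable pieces to be the individual line bundles. Since every line bundle has rank one it is stable, hence semistable, and its slope equals its degree; thus the decompositions $\E = \bigoplus_{i=1}^n \cL_i$ and $\E' = \bigoplus_{i=1}^n \cL'_i$ of Remark \ref{padrao} already fit the setup of Theorem \ref{breaksemistable} with $m_1 = m_2 = n$, $\cF_i = \cL_i$, $\cF'_i = \cL'_i$, the slope ordering being the degree ordering. I would then set $a = b = j$, so that the four bundles $\E_1,\E_2,\E_1',\E_2'$ appearing in Theorem \ref{breaksemistable} coincide with those defined in the corollary, and $\rk(\E_1') = j-1 = \rk(\E_1)$ as required. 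The weights $r_1,r_2$ are defined by the same formula in both statements, so once the hypotheses are checked the conclusion transfers essentially verbatim.

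The only nontrivial hypothesis of Theorem \ref{breaksemistable} left to verify is the slope separation $\mu(\cL'_{j'}) > \mu(\cL_i)$, that is $\deg(\cL'_{j'}) > \deg(\cL_i)$, for all $j' \geq j$ and all $i \leq j-1$. Because $\deg(\cL'_{j'})$ is nondecreasing in $j'$ and $\deg(\cL_i)$ is nondecreasing in $i$, this whole family of inequalities reduces to the single inequality $\deg(\cL'_j) > \deg(\cL_{j-1})$. Writing $\deg(\cL'_j) = \deg(\cL_j) - \epsilon_j$, the gap hypothesis $\deg(\cL_j) - \deg(\cL_{j-1}) > |x|$ shows that this holds as soon as $\epsilon_j \leq |x|$, and the bound $\epsilon_j \leq |x|$ is precisely what Theorem \ref{menorqued} provides. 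This is the crux of the argument and the step I expect to be the main (if modest) obstacle, since invoking Theorem \ref{menorqued} requires having an actual Hecke modification at hand to produce the bound on $\epsilon_j$.

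Consequently I would treat the two implications separately, each time producing $\epsilon_j \leq |x|$ from a different modification. For the forward implication I assume $[\E' \xrightarrow[r]{x} \E]$ and apply Theorem \ref{menorqued} to this modification to get $\epsilon_j \leq |x|$; the slope separation then holds, and Theorem \ref{breaksemistable} yields $[\E_1' \xrightarrow[r_1]{x}\E_1]$ and $[\E_2' \xrightarrow[r_2]{x}\E_2]$. For the converse I assume both of these and apply Theorem \ref{menorqued} to $[\E_2' \xrightarrow[r_2]{x}\E_2]$ — whose presentation is again as in Remark \ref{padrao}, with $\cL_j,\cL'_j$ its lowest-degree components — to obtain $\epsilon_j \leq |x|$ once more; the slope separation then holds, and Theorem \ref{breaksemistable} returns $[\E' \xrightarrow[r]{x}\E]$. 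As a final bookkeeping check I would confirm that the weight $r_1$ read off from $\E_1,\E_1'$ together with $r_2 = r - r_1$ matches the degree count $\deg(\E) - \deg(\E') = r\,|x|$, so that the splitting of weights is consistent on both sides of the equivalence.
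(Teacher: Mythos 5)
Your proposal is correct and takes essentially the same route as the paper: reduce the slope-separation hypothesis of Theorem \ref{breaksemistable} to the bound $\epsilon_i \leq |x|$ supplied by Theorem \ref{menorqued}, with each line bundle serving as its own semistable factor. You are in fact more careful than the paper's two-line proof, which does not make explicit that in the converse direction the bound $\epsilon_j \leq |x|$ must be obtained by applying Theorem \ref{menorqued} to the assumed modification $[\E_2' \xrightarrow[r_2]{x} \E_2]$ rather than to the not-yet-established $[\E' \xrightarrow[r]{x} \E]$.
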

\begin{proof}
    By the previous theorem 
    \[\mu(\cL_i)=\deg(\cL_i')< \deg(\cL_{\ell})=\mu(\cL_{\ell}) \ \ \text{ for $1 \leq i \leq j-1$ and $j \leq \ell \leq n$.}\]
    Therefore, the corollary follows directly from Theorem $\ref{breaksemistable}$.
\end{proof}

\begin{df}
     Let $n$ be a positive integer. For each $r \in \{0, 1, \ldots, n\}$, we define $$\Delta_r^n := \{f:\{1, 2, \ldots , n\} \to \{0,1\}\,\big|\, \  \# \supp(f)=r\}.$$ 
Moreover, for $\delta \in \Delta_r^n$, let
\[|\delta| := \sum_{i=1}^n(1-\delta(i))(r-\sum_{j=1}^i\delta(j)).\]
 \end{df} 

 \begin{ex} \label{funvec}
    An element $\delta \in \Delta_r^n$ can be seen as a vector in $\{0,1\}^n$ with exactly $r$ entries equal to $1$. Hence, we might interpreted $|\delta|$ as the sum over the entries equal to $0$ that adds $r$ minus the number of previous entries equal to $1$. As an example, if $n=6$ and $r=2$:
   \[ \begin{aligned}
        \delta_1= & (0,1,1,0,0,0) \in \Delta_2^6 & & & |\delta_1|= & \ 2+0+0+0+0+0 =2 \\
       \delta_2= & (1,0,0,0,1,0) \in \Delta_2^6 & & & |\delta_2| = & \ 0+ 1 + 1 + 1 + 0 + 0 =3 
    \end{aligned}\]
\end{ex}

\begin{lemma} \label{existtrivial}
    Let $\cE = \bigoplus_{i=1}^n \cL_i$ with $\cL_i \in \Pic X$ for $i=1, \ldots, n$. Given $r \in \{1, \ldots, n\}$ and $\delta \in \Delta_r^n$, let 
    \[ \E' := \bigoplus_{i=1}^n \cL_i \otimes \cO(-\delta(i)x).\] 
    Then there exists a $\varphi:\E'\to \E$ such that $[\E' \xrightarrow[r]{x} \E]$.
\end{lemma}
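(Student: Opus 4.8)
The plan is to realize the desired morphism as an external direct sum of elementary maps, one per summand, and then simply read off the cokernel. For each index $i \in \{1, \ldots, n\}$, I would define a map $\varphi_i : \cL_i \otimes \cO(-\delta(i)x) \to \cL_i$ as follows. If $\delta(i)=0$, the source is $\cL_i$ itself and we take $\varphi_i = \id$. If $\delta(i)=1$, the source is $\cL_i \otimes \cO(-x)$, and we take $\varphi_i$ to be $\cL_i$ tensored with the canonical inclusion $\cO(-x) \hookrightarrow \cO$ of the ideal sheaf of $x$. Setting $\varphi := \bigoplus_{i=1}^n \varphi_i : \E' \to \E$ then gives an injective morphism, since each $\varphi_i$ is injective and a finite direct sum of injections is injective.

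It remains to identify the cokernel of $\varphi$. I would first recall that the canonical inclusion $\cO(-x) \hookrightarrow \cO$ has cokernel the structure sheaf of the reduced closed point $x$, i.e.\ the skyscraper sheaf $\cK_x$ with stalk $\cO_{X,x}/\fm_x = \kappa(x)$ (locally at $x$ the inclusion is multiplication by the uniformizer $\pi_x$). Tensoring by the invertible sheaf $\cL_i$ is exact, and since $\cL_i$ is locally free of rank one while $\cK_x$ is supported at the single point $x$, trivializing $\cL_i$ on an affine neighbourhood of $x$ yields $\cL_i \otimes \cK_x \cong \cK_x$. Hence $\mathrm{coker}(\varphi_i) = 0$ when $\delta(i)=0$ and $\mathrm{coker}(\varphi_i) \cong \cK_x$ when $\delta(i)=1$.

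Because taking cokernels commutes with finite direct sums, I obtain $\mathrm{coker}(\varphi) \cong \bigoplus_{i\,:\,\delta(i)=1}\cK_x$. As $\delta \in \Delta_r^n$ satisfies $\#\supp(\delta)=r$, exactly $r$ of the indices contribute, so $\mathrm{coker}(\varphi) \cong \cK_x^{\oplus r}$. Thus $\varphi$ fits into a short exact sequence $0 \to \E' \to \E \to \cK_x^{\oplus r} \to 0$, which is by definition a Hecke modification $[\E' \xrightarrow[r]{x} \E]$.

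The computation is elementary and I do not anticipate a genuine obstacle; the only points that require a little care are the two sheaf-theoretic identifications $\cO/\cO(-x) \cong \cK_x$ and $\cL_i \otimes \cK_x \cong \cK_x$, both of which follow from working locally on a trivializing affine neighbourhood of $x$ together with the fact that $\cK_x$ is a torsion sheaf supported at $x$. Note that no ordering hypothesis on the $\cL_i$ is needed here, since we are only constructing one Hecke modification rather than characterizing all of them.
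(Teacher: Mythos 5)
Your proposal is correct and follows essentially the same route as the paper: the paper also builds the morphism summand-by-summand from the twisted inclusion $\cL_i(-x) \hookrightarrow \cL_i$ (obtained by tensoring $0 \to \cO(-x) \to \cO \to \cK_x \to 0$ with $\cL_i$) together with identity maps on the untwisted summands, and then sums the resulting short exact sequences to get $0 \to \E' \to \E \to \cK_x^{\oplus r} \to 0$. Your write-up is in fact slightly more careful than the paper's, since you make explicit the identifications $\cO/\cO(-x) \cong \cK_x$ and $\cL_i \otimes \cK_x \cong \cK_x$ that the paper uses implicitly.
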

\begin{proof}
    Consider the exact sequence
    \[0 \rightarrow \cO(-x) \rightarrow \cO \rightarrow \cK_x \rightarrow 0.\]
    The functor $\cL_i \otimes - $ is exact, which implies that for each $i \in \{1, \ldots, n\}$, there are exact sequences
      \[0 \rightarrow \cL_i(-x) \rightarrow \cL_i \rightarrow \cK_x \rightarrow 0,\]
      and 
        \[0 \rightarrow \cL_i \rightarrow \cL_i \rightarrow 0 \rightarrow 0.\]
        Combining those sequences yields
          \[0 \rightarrow \bigoplus_{i=1}^n \cL_i \otimes \cO(-\delta(i)x) \rightarrow\bigoplus_{i=1}^n \cL_i \rightarrow \cK_x^{\oplus r} \rightarrow 0,\]
          a short exact sequence. This completes the proof.
\end{proof}
 
\begin{cor}
  Let $\cE \in \Bun_n(X)$ isomorphic to a sum of line bundles $\cE= \bigoplus_{i=1}^n \cL_i$. Suppose $\deg(\cL_{i+1})-\deg(\cL_i) > |x|$ for all $i \in \{1, \ldots, n-1\}$. Fix an $r \in \{1, \ldots, n\} $. Then,  $ \E'= \bigoplus_{i=1}^n \cL_i'$, with $\cL_i'\in \Pic(X)$  for $i=1, \ldots, n$, is a Hecke modification of $\E$ at $x$ with weight $r$ if and only if there exists $\delta \in \Delta_r^n$ such that 
   \[ \cL_i' \cong \cL_i \otimes \cO(-\delta(i)x) \]
   for each $ i \in \{1, \ldots, n\}.$
\end{cor}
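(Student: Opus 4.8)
The plan is to combine the large-gap hypothesis with the two structural results already established: the slope-splitting Corollary \ref{break} (which lets me peel off line bundles one at a time) and the existence Lemma \ref{existtrivial} (which produces the desired Hecke modification from any $\delta \in \Delta_r^n$). The forward direction is already handled by Lemma \ref{existtrivial}, so the real content is the converse: assuming $[\E' \xrightarrow[r]{x} \E]$ with the gap condition $\deg(\cL_{i+1})-\deg(\cL_i) > |x|$ for every $i$, I must show $\E'$ must have the prescribed shape $\cL_i' \cong \cL_i \otimes \cO(-\delta(i)x)$ for some $\delta \in \Delta_r^n$.

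First I would reduce to the standard setting of Remark \ref{padrao}, writing $\E = \bigoplus \cL_i$ and $\E' = \bigoplus \cL_i'$ with degrees nondecreasing and $\deg(\cL_i') \leq \deg(\cL_i)$, and setting $\epsilon_i := \deg(\cL_i) - \deg(\cL_i') \geq 0$. By Theorem \ref{menorqued} each $\epsilon_i$ satisfies $0 \leq \epsilon_i \leq |x|$, and summing the short exact sequence degrees gives $\sum_i \epsilon_i = r|x|$. The heart of the argument is to iterate Corollary \ref{break}: since every consecutive gap exceeds $|x|$, the hypothesis of the corollary holds at \emph{every} index $j \in \{2, \dots, n\}$, so I can split off one line bundle at a time. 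Applying the corollary with $j=n$ separates $\cL_n$ from the rest, and then repeatedly to the lower-rank piece, I decompose the original modification into $n$ rank-one Hecke modifications $[\cL_i' \xrightarrow[r_i]{x} \cL_i]$ with $\sum_i r_i = r$.

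The key step is then to analyze each rank-one factor. A rank-one Hecke modification $[\cL_i' \xrightarrow[r_i]{x} \cL_i]$ forces $\cL_i / \cL_i' \cong \cK_x^{\oplus r_i}$, but a skyscraper sheaf of the form $\cK_x^{\oplus r_i}$ sitting inside a quotient of line bundles has length $r_i|x|$; since the quotient $\cL_i/\cL_i'$ has length exactly $\epsilon_i \cdot$ (something) with $\epsilon_i = r_i|x|$ and $\epsilon_i \leq |x|$ by Theorem \ref{menorqued}, I get $r_i \in \{0,1\}$ and correspondingly $\epsilon_i \in \{0, |x|\}$, i.e. $\cL_i' \cong \cL_i \otimes \cO(-r_i x)$. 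Defining $\delta(i) := r_i$ gives a function $\delta: \{1,\dots,n\} \to \{0,1\}$; the constraint $\sum_i r_i = r$ says exactly $\#\supp(\delta) = r$, so $\delta \in \Delta_r^n$, and $\cL_i' \cong \cL_i \otimes \cO(-\delta(i)x)$ as required.

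The main obstacle I anticipate is making the iterated application of Corollary \ref{break} fully rigorous: the corollary as stated splits at a \emph{single} index, and I must verify that after each split the reordering of summands (so that the lower-rank piece still satisfies the standing assumptions of Remark \ref{padrao} with the matching-rank condition $\rk(\E_1') = \rk(\E_1)$) is compatible, and that the resulting weights $r_1, r_2$ are correctly tracked through the induction. A clean way to avoid bookkeeping headaches is to argue by induction on $n$: peel off the top summand $\cL_n$ via Corollary \ref{break} at $j = n$, invoke the rank-one analysis to conclude $\cL_n' \cong \cL_n \otimes \cO(-\delta(n)x)$, and apply the inductive hypothesis to $[\E_1' \xrightarrow[r_1]{x} \E_1]$ of rank $n-1$, whose consecutive gaps still all exceed $|x|$. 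The base case $n=1$ is immediate from Theorem \ref{menorqued}.
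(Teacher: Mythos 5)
Your proposal is correct and follows essentially the same route as the paper: the forward direction via Lemma \ref{existtrivial}, and the converse by combining the degree bounds of Theorem \ref{menorqued} with iterated applications of Corollary \ref{break} to split the modification into rank-one pieces $[\cL_i' \xrightarrow[r_i]{x} \cL_i]$, whose weights $r_i \in \{0,1\}$ then define $\delta \in \Delta_r^n$. Your explicit induction on $n$ and the degree computation $\epsilon_i = r_i|x| \leq |x|$ merely spell out steps the paper leaves implicit.
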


\begin{proof}
    By Corollary \ref{existtrivial}, for all $\delta \in \Delta_r^n$, $ \E'= \bigoplus_{i=1}^n \cL_i \otimes \cO(-\delta(i)x)$ is a Hecke modification of $\E$ at $x$ with weight $r$. Conversely, let $\cF=\bigoplus_{i=1}^n\cL_i'' \in \Bun_n(X)$, with $\cL_i''\in \Pic(X)$, such that $[\cF \xrightarrow[r]{x} \E]$. By Theorem \ref{menorqued}
    \[\deg(\cL_i'')<\deg(\cL_i) \ \ \text{ for all $i=1, \dots, n$}.\]
    Thus, Corollary \ref{break} yields $[\cL_i'' \xrightarrow[r_i]{x} \cL_i]$ for all $i \in \{1, \dots, n\}$ with $r_i=0$ or $1$. Those weights are equal to $1$ for exactly $r$ indices. This allows us to define $\delta \in \Delta_r^n$,  by setting $\delta(i):=r_i$  for each $i \in {1, \ldots, n}$. Therefore, 
\[\cF= \bigoplus_{i=1}^n \cL_i \otimes \cO(-\delta(i)x),\]
which completes the proof.
\end{proof}

We finish this section with an application of the previous discussion to the case where $X$ is the projective line.
\begin{thm}[Birkhoff-Grothendieck]\label{birgro}
    Every rank $n$ vector bundle over $\mathbb{P}^1$ is isomorphic to
$$\mathcal{O}_{\mathbb{P}^1}(d_1) \oplus \cdots \oplus \mathcal{O}_{\mathbb{P}^1}(d_n)$$
for some integers $d_1 \leq \cdots \leq d_n.$ In particular, the line bundles are the unique indecomposable objects in the category of vector bundles over $\P^1$.  
\end{thm}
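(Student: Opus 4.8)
The plan is to prove this by induction on the rank $n$, splitting off at each step a line subbundle of maximal possible degree. For the base case $n=1$ I would only note that $\Pic(\P^1)\cong\Z$ is generated by $\cO(1)$, so every line bundle is $\cO(d)$ for a unique integer $d$, which is the $n=1$ instance of the claim.

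For the inductive step, assume the statement for all ranks below $n$ and let $\E$ have rank $n$. First I would choose $d\in\Z$ to be the largest integer for which $\Hom(\cO(d),\E)\cong H^0(\E(-d))\neq 0$; this maximum exists because the group is nonzero for $d\ll 0$ (by Riemann--Roch, since $\E(-d)$ then has large degree) and vanishes for $d\gg 0$. A nonzero $\varphi\colon\cO(d)\to\E$ is then automatically a subbundle inclusion: were $\varphi$ to vanish at a point $p$, the associated section of $\E(-d)$ would vanish at $p$ and hence factor through $\E(-d-1)$, producing a nonzero element of $H^0(\E(-d-1))$ and contradicting the maximality of $d$. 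This yields a short exact sequence $0\to\cO(d)\to\E\to\cQ\to 0$ with $\cQ$ locally free of rank $n-1$, to which the inductive hypothesis applies, giving $\cQ\cong\bigoplus_{i=1}^{n-1}\cO(e_i)$.

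It then remains to split this sequence. Since $\Ext^1(\cQ,\cO(d))\cong\bigoplus_i H^1(\cO(d-e_i))$ and $H^1(\P^1,\cO(m))=0$ precisely when $m\geq -1$, it suffices to show $e_i\leq d$ for all $i$. I would deduce this from the maximality of $d$ once more: twisting the sequence by $\cO(-d-1)$ and using $H^0(\E(-d-1))=0$ together with $H^0(\cO(-1))=H^1(\cO(-1))=0$ in the associated long exact sequence forces $H^0(\cQ(-d-1))=0$, i.e.\ $e_i-d-1<0$, so $e_i\leq d$ for every $i$. Hence the extension is trivial and $\E\cong\cO(d)\oplus\bigoplus_{i=1}^{n-1}\cO(e_i)$; reordering the summands by degree completes the induction. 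The final assertion is then immediate, since every bundle of rank at least two decomposes nontrivially while a line bundle is plainly indecomposable, so the indecomposable objects are exactly the line bundles. I expect the splitting step to be the crux of the argument: it is the only place where the maximality of $d$ — rather than the mere existence of a line subbundle — is indispensable, and the degree bound $e_i\leq d$ that it produces is precisely what annihilates the obstruction class in $\Ext^1$.
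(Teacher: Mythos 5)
The paper does not prove this statement at all: Theorem \ref{birgro} is the classical Birkhoff--Grothendieck theorem, recorded as a known input and used afterwards (e.g.\ in Corollary \ref{greater0}), so there is no internal argument to compare yours against. Your proof is correct, and it is the standard one: induct on the rank, split off a line subbundle $\cO(d)$ with $d$ maximal such that $H^0(\E(-d))\neq 0$, and kill the extension class in $\Ext^1(\cQ,\cO(d))\cong\bigoplus_i H^1(\P^1,\cO(d-e_i))$ via the bound $e_i\leq d$, which you correctly extract from maximality by twisting the sequence by $\cO(-d-1)$; your closing remark that the splitting step is where maximality (and not mere existence) of $d$ is indispensable is exactly right. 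Two small points would tighten it. First, the vanishing $H^0(\E(-d))=0$ for $d\gg 0$ is asserted without justification; one line suffices, e.g.\ $\E$ embeds in $\cO(m)^{\oplus N}$ for some $m$ (dualize a surjection from a finite sum of line bundles onto $\E^{\vee}$, which exists by Serre's theorem), so that $H^0(\E(-d))\hookrightarrow H^0(\cO(m-d))^{\oplus N}=0$ once $d>m$. Second, the paper works over an arbitrary field (finite fields being the case of interest later), where the closed point $p$ at which $\varphi$ vanishes may have degree $e>1$; your factorization then produces a nonzero element of $H^0(\E(-d-e))$ rather than of $H^0(\E(-d-1))$, but since $e\geq 1$ this contradicts the maximality of $d$ just the same, so the argument goes through verbatim over any base field.
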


\begin{cor}\label{greater0}
    Let $\E = \bigoplus_{i=1}^n \cO(d_i) \in \Bun_n (\P^1)$ be as Theorem \ref{birgro}. Then, every Hecke modification of $\E$ at $x$ with weight $r$ can be represented in the form
\begin{equation}\label{odhec}
 0 \longrightarrow \bigoplus_{i=1}^n \cO(d_i-\epsilon_i) \longrightarrow \bigoplus_{i=1}^n \cO(d_i) \longrightarrow  \cK_x^{\oplus r} \longrightarrow 0, 
    \end{equation}
for some $\epsilon_i \in \Z$, with $0 \leq \epsilon_i \leq |x|$ for all $i=1, \ldots, n$.   
\end{cor}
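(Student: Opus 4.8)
The plan is to combine the Birkhoff--Grothendieck decomposition with the degree bound already established in Theorem \ref{menorqued}. Let $[\E' \xrightarrow[r]{x} \E]$ be an arbitrary Hecke modification of $\E$ at $x$ with weight $r$. By Definition \ref{def-heckemod} we have $\E' \in \Bun_n(\P^1)$, so $\E'$ is a rank-$n$ vector bundle (the rank is forced to equal $n$ since the quotient $\cK_x^{\oplus r}$ is torsion). Hence Theorem \ref{birgro} yields an isomorphism $\E' \cong \bigoplus_{i=1}^n \cO(d_i')$ with $d_1' \leq \cdots \leq d_n'$. Thus both $\E$ and $\E'$ are direct sums of line bundles with nondecreasing degrees, which is precisely the setting of Remark \ref{padrao}.

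Next I would invoke Remark \ref{padrao}, that is \cite[Prop. 4.12]{alvarenga19}: since $[\E' \xrightarrow[r]{x} \E]$ and both bundles are sums of line bundles with sorted degrees, we may match the summands so that $d_i' \leq d_i$ for every $i \in \{1, \ldots, n\}$. Setting $\epsilon_i := d_i - d_i'$, this immediately gives $\epsilon_i \geq 0$ and rewrites the sequence in the claimed form
\[ 0 \longrightarrow \bigoplus_{i=1}^n \cO(d_i - \epsilon_i) \longrightarrow \bigoplus_{i=1}^n \cO(d_i) \longrightarrow \cK_x^{\oplus r} \longrightarrow 0. \]

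Finally, the upper bound $\epsilon_i \leq |x|$ is exactly the content of Theorem \ref{menorqued} applied to this sequence, which completes the argument.

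The computation is essentially immediate once the preceding results are in place; there is no real obstacle beyond verifying that the hypotheses of Remark \ref{padrao} and Theorem \ref{menorqued} are met, namely that $\E'$ has rank $n$ and that the two degree orderings are compatible after reindexing. The genuine mathematical content---bounding each $\epsilon_i$ by $|x|$---has already been carried out in Theorem \ref{menorqued}, so the only task remaining here is to transport that general statement to the $\P^1$ setting, where the Birkhoff--Grothendieck theorem guarantees that $\E'$ is itself a sum of line bundles and thus fits the required framework.
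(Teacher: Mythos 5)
Your proposal is correct and follows essentially the same route as the paper: Birkhoff--Grothendieck to decompose $\E'$, the reindexing convention of Remark \ref{padrao} (i.e.\ \cite[Prop.~4.12]{alvarenga19}) to get $\epsilon_i \geq 0$, and Theorem \ref{menorqued} for the bound $\epsilon_i \leq |x|$. The paper's own proof is just a terser version of yours, leaving the appeal to Remark \ref{padrao} implicit inside its citation of Theorem \ref{menorqued}.
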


\begin{proof}
Let $\E' \in \Bun_n (\P^1)$ be a Hecke modification of $\E$.  
    By Theorem \ref{birgro}, we can write $ \E' \cong \bigoplus_{i=1}^n \cO(d_i')$ for some integers $d_i'$ with $d_i' \leq d_{i+1}'$, for $i=1, \ldots, n-1$. By Theorem \ref{menorqued}, $d_i' = d_i-\epsilon_i$, for some $0 \leq \epsilon_i \leq |x|$, which concludes the proof.
\end{proof}

\begin{thm} \label{thm-heckemodforP1anyfield}
    Let $k$ be an algebraically closed field. Let $\E, \E' \in \Bun_n (\P^1)$. Write $\E = \bigoplus_{i=1}^n \cO(d_i), \E' = \bigoplus_{i=1}^n \cO(d_i')$ as in Theorem \ref{birgro}. Then $\E'$ is a Hecke modification of $\E$ at $x$ with weight $r$ if and only if there is $\delta \in \Delta_r^n$ such that 
    \[ d_i ' = d_i -\delta (i).\]
\end{thm}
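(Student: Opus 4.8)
The plan is to reduce the statement to the results already proved for sums of line bundles, using the single extra ingredient that over an algebraically closed field every closed point of $\P^1$ has residue field $k$, so that $|x| = 1$. This collapses the degree range $\{0, 1, \dots, |x|\}$ appearing in Theorem \ref{menorqued} and Corollary \ref{greater0} to $\{0, 1\}$, which is precisely the target set of the functions in $\Delta_r^n$.

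For the ``if'' direction I would argue directly from Lemma \ref{existtrivial}. Given $\delta \in \Delta_r^n$ with $d_i' = d_i - \delta(i)$, I take $\cL_i = \cO(d_i)$ in that lemma; because $|x| = 1$ the twist $\cO(d_i) \otimes \cO(-\delta(i)x)$ is the line bundle $\cO(d_i - \delta(i)) = \cO(d_i')$. Lemma \ref{existtrivial} then supplies a short exact sequence exhibiting $\bigoplus_{i=1}^n \cO(d_i') \cong \E'$ as a Hecke modification of $\E$ at $x$ of weight $r$, finishing this direction.

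For the ``only if'' direction I would start from Corollary \ref{greater0}: writing $\E' \cong \bigoplus_{i=1}^n \cO(d_i')$ in nondecreasing order, the modification can be represented so that $d_i' = d_i - \epsilon_i$ with $0 \leq \epsilon_i \leq |x| = 1$, hence each $\epsilon_i \in \{0,1\}$. To pin down how many equal $1$, I would take degrees in $0 \to \E' \to \E \to \cK_x^{\oplus r} \to 0$, giving $\sum_{i=1}^n \epsilon_i = \deg(\E) - \deg(\E') = \deg(\cK_x^{\oplus r}) = r|x| = r$. Since each $\epsilon_i$ is $0$ or $1$ and they sum to $r$, exactly $r$ of them are $1$, so setting $\delta(i) := \epsilon_i$ gives an element of $\Delta_r^n$ with $d_i' = d_i - \delta(i)$.

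I do not expect a real obstacle in the argument itself; all of the weight is carried by the earlier results. The one point deserving care is the indexing: the function $\delta$ lands in $\Delta_r^n$ only because Corollary \ref{greater0} (through Remark \ref{padrao} and \cite[Prop. 4.12]{alvarenga19}) matches the $i$-th term of the sorted sequence for $\E'$ with the $i$-th term of the sorted sequence for $\E$, rather than up to an arbitrary permutation. I would make sure to invoke this monotone matching explicitly, so that $\delta$ is genuinely a $\{0,1\}$-valued function of the correct index set and not merely some rearrangement of the multiset of twists.
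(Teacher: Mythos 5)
Your proposal is correct and follows essentially the same route as the paper: the ``only if'' direction via Theorem \ref{menorqued} (through Corollary \ref{greater0}) plus additivity of degree in short exact sequences, and the ``if'' direction via Lemma \ref{existtrivial} with $|x|=1$. In fact you are slightly more explicit than the paper's own two-line proof, which only spells out the ``only if'' direction and leaves the appeal to Lemma \ref{existtrivial} implicit.
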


\begin{proof}
   Over an algebraically closed field, every closed point has degree $1$. Hence, Theorem $(\ref{menorqued})$ yields $\epsilon_i=\deg(\cL_i)-\deg(\cL_i)' \in \{0,1\}$. By additivity of the degree in short exact sequences $\epsilon_i=1$ for exactly $r$ index $i \in \{1, \ldots, n\}$.
\end{proof}

\section{The Hall Algebra}
 In this section, we always assume that $k = \FF_q$ is the finite field with $q$ elements, and let $X$ be the projective line over $\FF_q$. We apply the Hall algebra of $\Coh(\P^1)$ to provide structure results for the Hecke modifications (and their multiplicities) for vector bundles over $\P^1$.

In the following theorem, we summarize some basic properties of $\Coh (\P^1)$. 

\begin{thm}\label{hallalgebra}
    The category $\Coh (\P^1)$ is $\FF_q$- linear, abelian and  satisfies the following finiteness 
conditions:
\begin{enumerate}[(i)]
    \item The isomorphism classes of objects in $\Coh (\P^1)$ form a set $\Iso(\Coh (\P^1))$.
    \item For all objects $\cF, \cG$ in $\Coh (\P^1)$, the $\FF_q$-vector space $\Hom (\cF, \cG)$ is finite-dimensional.
     \item For all objects $\cF, \cG$ in $\Coh (\P^1)$, the $\FF_q$-vector space $\Ext^1 (\cF, \cG)$ is finite-dimensional.
     \item The category $\Coh (\P^1)$ can be embedded as a full subcategory in an abelian $\FF_q$-linear category $\cA$ with enough injectives (or projectives). Moreover, $\Coh (\P^1)$ is closed under extensions in $\cA$, and $\Ext^2_{\cA}(\cF, \cG) = 0$ for all objects $\cF, \cG$ in $\Coh (\P^1)$. 
     \item Each object in $\Coh(\P^1)$ has a finite filtration with simple quotients (Jordan-Hölder series).
\end{enumerate}
\end{thm}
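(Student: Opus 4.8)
The plan is to verify the five conditions one at a time; each is a standard finiteness property of the category of coherent sheaves on a smooth projective curve over a finite field, and the only genuinely geometric inputs are Serre's finiteness theorem together with the fact that $\P^1$ is regular of dimension one. First I would dispatch the essentially formal items (i), (ii), (iii), then carry out the one real computation in (iv), and finally reduce (v) to the structure theory recorded in \cite{baumann-kassel-01}.

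For (i), I would invoke essential smallness of $\Coh(Y)$ for any Noetherian scheme $Y$: every coherent sheaf on $\P^1$ is a quotient of some $\cO(-m)^{\oplus N}$, so the isomorphism classes are indexed by a set of such presentations. Concretely, by Theorem \ref{birgro} every object splits as a torsion sheaf (finitely many partitions attached to finitely many closed points) together with a vector bundle $\bigoplus_i \cO(d_i)$, which is discrete combinatorial data. For (ii) and (iii) I would realize $\Hom$ and $\Ext^1$ as (hyper)cohomology: choosing a finite locally free resolution $L_\bullet \to \cF$ (which exists since $\P^1$ is smooth) and taking the hypercohomology of $\mathcal{H}om(L_\bullet,\cG)$ reduces both groups to coherent cohomology groups $H^j(\P^1,\mathcal{H}om(L_i,\cG))$, each finite-dimensional over $\FF_q$ by Serre's finiteness theorem. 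As the field is finite, these finite-dimensional spaces are in fact finite sets.

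The substantive step is (iv), which I expect to be the main obstacle, since it is the only part not reducible to a citation. I would take $\cA := \mathrm{QCoh}(\P^1)$, a Grothendieck abelian category and therefore one with enough injectives; the inclusion $\Coh(\P^1) \hookrightarrow \cA$ is exact, fully faithful, closed under extensions (an extension of two coherent sheaves is coherent because $\P^1$ is Noetherian), and induces isomorphisms on all $\Ext$-groups. The only claim requiring an actual argument is the vanishing $\Ext^2_{\cA}(\cF,\cG)=0$, i.e.\ that $\Coh(\P^1)$ is hereditary. I would extract it from the local-to-global spectral sequence
\[ E_2^{p,q}=H^p\!\left(\P^1,\mathcal{E}xt^q(\cF,\cG)\right)\Longrightarrow \Ext^{p+q}(\cF,\cG). \]
Here $H^p$ vanishes for $p\geq 2$ because $\P^1$ is one-dimensional, and $\mathcal{E}xt^q$ vanishes for $q\geq 2$ because $\P^1$ is regular of dimension one; the only remaining contribution in total degree two is $E_2^{1,1}=H^1(\P^1,\mathcal{E}xt^1(\cF,\cG))$, which vanishes because $\mathcal{E}xt^1(\cF,\cG)$ is supported on the finite non-locally-free locus of $\cF$ and is therefore a torsion sheaf with no higher cohomology. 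Hence $\Ext^2_{\cA}(\cF,\cG)=0$.

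Finally, for (v) I would note that the simple objects of $\Coh(\P^1)$ are exactly the skyscraper sheaves $\cK_x$ at closed points, so that the torsion part of any coherent sheaf — having finite length — admits an evident composition series; the required filtration is then assembled from this together with the canonical torsion/torsion-free decomposition supplied by Theorem \ref{birgro}. This bookkeeping is the most delicate point of the statement, so rather than reprove it I would invoke the structure results for $\Coh(\P^1)$ recorded in \cite{baumann-kassel-01}, which already package all five finiteness conditions in precisely the form needed to run the Hall-algebra machinery of the next section.
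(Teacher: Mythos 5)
Your treatment of items (i)--(iv) is correct, and it is genuinely more self-contained than the paper's own proof, which consists entirely of the citation to \cite[Prop. 3]{baumann-kassel-01}: essential smallness via Serre's theorem, finiteness of $\Hom$ and $\Ext^1$ via finite locally free resolutions plus Serre finiteness, and heredity via the local-to-global spectral sequence (including the correct observation that $E_2^{1,1}$ vanishes because $\mathcal{E}xt^1(\cF,\cG)$ has zero-dimensional support) are all sound. The only caveat there is that your spectral sequence computes $\Ext$ in $\mathrm{Mod}(\cO)$, whereas (iv) asks for vanishing in $\cA=\mathrm{QCoh}(\P^1)$; the comparison $\Ext^i_{\mathrm{QCoh}}\cong\Ext^i_{\mathrm{Mod}(\cO)}$ on a Noetherian scheme is standard but should be stated, since it is exactly the step that transports your computation into the ambient category.

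The genuine gap is item (v). You claim the required filtration can be ``assembled'' from a composition series of the torsion part together with the torsion/torsion-free splitting; this cannot work, and no bookkeeping will repair it. The simple objects of $\Coh(\P^1)$ are exactly the skyscraper sheaves $\cK_x$, all of rank zero, and rank is additive along any finite filtration, so an object admitting a finite filtration with simple quotients necessarily has rank zero. Consequently no locally free sheaf of positive rank --- not even $\cO$ itself, which sits in the never-terminating chain $\cO \supset \cO(-x) \supset \cO(-2x) \supset \cdots$ --- has a finite Jordan--H\"older series: condition (v) holds precisely for torsion sheaves and, as stated for all objects, is false. Your fallback of ``invoking the structure results of \cite{baumann-kassel-01}'' cannot rescue the sketch, because whatever the cited proposition establishes, it cannot contain (v) in this literal form; what the Hall-algebra machinery of the next section actually uses is $\Hom$/$\Ext^1$-finiteness and heredity, not finite length. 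Since the paper's entire proof is that same citation, the contradiction you have run into is really a defect in how the paper transcribed the cited result --- but your proposed argument for (v) asserts something false and should be deleted rather than patched, with the discrepancy flagged instead.
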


\begin{proof} See \cite[Prop 3]{baumann-kassel-01} \end{proof}

We represent the class of an object $\alpha \in ob(\Coh (\P^1))$ in $\Iso(\Coh (\P^1))$ by itself, that is, $\overline{\alpha}=\alpha$. Given three isomorphism classes $\alpha, \beta, \gamma \in \Iso(\Coh (\P^1))$, we denote by $\phi_{\alpha \gamma}^{\beta}$ the number of sub-objects $\cF \in \Iso(\beta)$ such that $\cF \cong \gamma$ and $\beta/\cF \cong \alpha$. That is
\[\phi_{\alpha \gamma}^{\beta} = \dfrac{\#\{0\longrightarrow \gamma \longrightarrow \beta \longrightarrow \alpha \longrightarrow 0\}}{\#\Aut(\alpha)\# \Aut(\gamma)}.\]
The integer $\phi_{\alpha \gamma}^{\beta}$ is called the \textbf{Hall number of $(\alpha, \beta, \gamma)$}.

Since $\Ext^1 (\alpha, \gamma )$ is a finite set, there are only finitely many isomorphism classes $\beta$ such that $\phi^{\beta}_{\alpha \gamma} \neq 0$.


\begin{df}
    Let $\Tilde{Z}=\Z[v,v^{-1}]/(v^2-q)$. Let $H(\Coh (\P^1))$ be the free $\Tilde{Z}$-module on the symbols $\alpha \in \Iso(\Coh (\P^1))$. We can define a product in $H(\Coh (\P^1))$ as follows,
\[\alpha * \gamma = \sum_{\beta \in \Iso(\Coh (\P^1))} \phi_{\alpha \gamma}^{\beta} \beta.\]
With this product, $H(\Coh (\P^1))$ has the structure of an associative $\Tilde{Z}$-algebra with unit given by the zero element. This algebra is called the \textbf{Hall algebra of the category $\Coh (\P^1)$}.
\end{df}


\begin{rem}     The Hall algebra might be defined over any finitary category, see \cite{olivier-12}. In particular, the above definition holds for every smooth projective curve defined over $\FF_q$. Moreover, one can define the Hall algebra of a curve defined over an arbitrary field, see \cite{lusztig-91}. In this more general setting, the Hall number $h_{\F,\G}^{\mathcal{H}}$ is replaced by the Euler characteristic of the constructible space of all such objects (i.e., the space of all subobjects of $\cH$ of type $\cG$ and cotype $\F$). This variant of the Hall algebra is known as \emph{$\chi$-Hall algebra}.
\end{rem}


\begin{rem}
Let $r, n \in \Z_{>0}$ with $r \leq n$. Let $\alpha=\cK_x^{\oplus r}$ and $\gamma=\E' \in \Bun_n\P^1$. If $\E \in \Bun_n(\P^1)$ is such that $\phi^{\E}_{\cK_x^{\oplus r}\E'} \neq 0$ then $\E'$ is a Hecke modification of $\E$ at $x$ with weight $r$. Moreover,
$$m_{x,r}(\E', \E)=\phi^{\E}_{\cK_x^{\oplus r}\E'},$$
cf.\ \cite[Lemma 2.1]{alvarenga20}.
\end{rem}


\noindent\textbf{Notation.} We denote the Hall product of $n$ line bundles $\cO(d_1), \dots, \cO(d_n) \in \Pic(\P^1)$ in $H(\Coh (\P^1))$ by
\[\cO(d_1)* \cdots * \cO(d_n)=: \hprod_{i=1}^n \cO(d_i).\]
    

\begin{thm} \label{hallprop}
    In the Hall algebra $H(\Coh (\P^1))$ we have the following relations:
    \begin{enumerate}[(i)]
        \item If $\cF, \cG \in \Coh (\P^1)$ are such  that $\Hom(\cG,\cF)=0$ then $\cF*\cG=\cF \oplus \cG$.
        
        \item $ \cO(m)^{\oplus a} * \cO(m)^{\oplus b}= \left( \prod_{i=0}^{a-1} \dfrac{q^{a+b-i} -1}{q^{a-i}-1}\right) \cO(m)^{\oplus (a+b)}$ for every $m \in \Z$ \\ and $a,b \in \N$.
        
        \item $\displaystyle \hprod_{j=1}^{a}\cO(m) = \left( \prod_{i=0}^{a-1} \dfrac{q^{a-i} -1}{q-1}\right) \cO(m)^{\oplus a}$ for every $m \in \Z$ and $a \in \N$.
        
        \item If $m<n$, then \[\cO(n)*\cO(m)=q^{n-m+1}\cO(m)\oplus \cO(n)+\sum_{i=1}^{\lfloor \frac{n-m}{2} \rfloor}(q^2-1)q^{n-m-1}\cO(m+i)\oplus \cO(n-i).\]
        
        \item $\cK_x^{\oplus r}*\cO(m)= \cO(m+|x|) \oplus \cK_x^{r-1} +q^{r|x|}\cO(m)\oplus \cK_x^{r}.$
    \end{enumerate}
\end{thm}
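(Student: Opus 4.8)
The plan is to read off each structure constant $\phi^\beta_{\alpha\gamma}$ occurring in the five products directly from its definition as a normalized count of subobjects, controlling the relevant $\Hom$- and $\Ext^1$-spaces by Serre duality on $\P^1$ (so $\Ext^1(\cF,\cG)\cong\Hom(\cG,\cF\otimes\cO(-2))^\ast$, since $\omega_{\P^1}=\cO(-2)$), and organizing the bookkeeping through the standard Hall-algebra (Riedtmann–Peng) relation
\[\phi^\beta_{\alpha\gamma}=\frac{\#\Ext^1(\alpha,\gamma)_\beta\cdot\#\Aut(\beta)}{\#\Hom(\alpha,\gamma)\cdot\#\Aut(\alpha)\cdot\#\Aut(\gamma)},\]
where $\Ext^1(\alpha,\gamma)_\beta$ is the set of classes whose middle term is isomorphic to $\beta$. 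The possible middle terms are enumerated using Theorem \ref{birgro} together with the fact that on a smooth curve every coherent sheaf splits as (locally free)\,$\oplus$\,(torsion).

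For (i), since $\Hom(\cG,\cF)=0$ a short analysis of the locally free and torsion parts, combined with Serre duality, gives $\Ext^1(\cF,\cG)=0$; hence the split extension is the only one and the sole middle term is $\cF\oplus\cG$. A subobject $N\cong\cG$ of $\cF\oplus\cG$ with quotient $\cF$ maps to $\cF$ via a morphism in $\Hom(\cG,\cF)=0$, so $N\subseteq\cG$ and therefore $N=\cG$, giving coefficient $1$; equivalently $\Hom(\cG,\cF)=0$ makes $\End(\cF\oplus\cG)$ triangular, so $\#\Aut(\cF\oplus\cG)=\#\Aut(\cF)\#\Aut(\cG)\#\Hom(\cF,\cG)$ and the displayed identity returns $1$. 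For (ii) I note $\Ext^1(\cO(m)^{\oplus a},\cO(m)^{\oplus b})=H^1(\cO)^{\oplus ab}=0$, so the only middle term is $\cO(m)^{\oplus(a+b)}$; as $\Hom(\cO(m),\cO(m))=\FF_q$, subobjects isomorphic to $\cO(m)^{\oplus b}$ with locally free quotient correspond bijectively to $b$-dimensional $\FF_q$-subspaces of $\FF_q^{a+b}$ (all such subsheaves are automatically subbundles, their quotients being semistable of the same slope, hence $\cong\cO(m)^{\oplus a}$ by Theorem \ref{birgro}). Their number is the Gaussian binomial $\prod_{i=0}^{a-1}\frac{q^{a+b-i}-1}{q^{a-i}-1}$. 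Part (iii) then follows by induction on $a$, applying (ii) with the pair $(1,a-1)$ at each step; the product telescopes to the $q$-factorial $\prod_{i=0}^{a-1}\frac{q^{a-i}-1}{q-1}$, the number of complete flags in $\FF_q^a$.

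For (iv) every middle term is a rank-$2$, degree-$(m+n)$ bundle, hence of the form $\cO(m+i)\oplus\cO(n-i)$ with $0\le i\le\lfloor(n-m)/2\rfloor$ by Theorem \ref{birgro}. Here $\#\Hom(\cO(n),\cO(m))=1$ and $\#\Ext^1(\cO(n),\cO(m))=q^{\,n-m-1}$, while $\#\Aut(\cO(m+i)\oplus\cO(n-i))$ is read off from $\Hom(\cO(n-i),\cO(m+i))=H^0(\cO(n-m-2i))$. The remaining input is the number $N_i$ of extension classes whose middle term has splitting type $i$: a sub-line-bundle $\cO(n-i)\hookrightarrow\beta$ surjecting onto $\cO(n)$ exists exactly when some nonzero $s\in H^0(\cO(i))$ annihilates the class under the multiplication map $H^0(\cO(i))\otimes H^1(\cO(m-n))\to H^1(\cO(m-n+i))$, and the splitting type is governed by the least such $i$. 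I expect this cup-product/obstruction count to be the main obstacle; it is pinned down by an explicit computation over $\FF_q$ and cross-checked by $\sum_i N_i=q^{\,n-m-1}$ (with $N_0=1$ for the zero class), after which the displayed identity yields the coefficients $q^{\,n-m+1}$ and $(q^2-1)q^{\,n-m-1}$.

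Finally, for (v) the middle term of $0\to\cO(m)\to\beta\to\cK_x^{\oplus r}\to 0$ splits as $\cO(m')\oplus\cK_x^{\oplus s}$ with $s\le r$; the key structural point is that $\cO(m')/\cO(m)$, being a quotient of the semisimple sheaf $\cK_x^{\oplus r}$, is itself semisimple, while it is also a cyclic torsion module of length $(m'-m)/|x|$ at $x$, forcing $m'-m\in\{0,|x|\}$. Thus the only middle terms are $\cO(m)\oplus\cK_x^{\oplus r}$ and $\cO(m+|x|)\oplus\cK_x^{\oplus(r-1)}$. Using $\Hom(\cK_x^{\oplus r},\cO(m))=0$, $\#\Ext^1(\cK_x^{\oplus r},\cO(m))=q^{\,r|x|}$, and $\#\Aut(\cK_x^{\oplus s})=\#\GL_s(\kappa(x))$, the displayed identity assigns coefficient $q^{\,r|x|}$ to the split type (only the zero class) and $1$ to the jumped type (the remaining $q^{\,r|x|}-1$ classes), giving the asserted relation.
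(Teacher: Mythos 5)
Your strategy --- reading off every structure constant from the Riedtmann--Peng formula
\[
\phi^{\beta}_{\alpha\gamma}
=\frac{\#\Ext^1(\alpha,\gamma)_\beta\cdot\#\Aut(\beta)}
{\#\Hom(\alpha,\gamma)\,\#\Aut(\alpha)\,\#\Aut(\gamma)}
\]
after classifying the possible middle terms --- is sound and genuinely different from the paper's proof, which handles (i) by Serre duality and simply cites \cite[Thm.~13]{baumann-kassel-01} for (ii), (iv), (v), with (iii) by induction on (ii). Your arguments for (i), (ii), (iii) and (v) are correct and essentially complete: the splitting of $\cF$ and $\cG$ into locally free and torsion parts does give $\Hom(\cG,\cF\otimes\cO(-2))=0$ and hence $\Ext^1(\cF,\cG)=0$; in (ii) every copy of $\cO(m)^{\oplus b}$ inside $\cO(m)^{\oplus(a+b)}$ is the column span of a constant full-rank matrix, so the count is the Gaussian binomial; and in (v) your middle-term classification ($\cO(m')/\cO(m)$ semisimple and locally cyclic forces $m'-m\in\{0,|x|\}$) together with $N_0=1$, $N_1=q^{r|x|}-1$ and the $\Aut$-bookkeeping returns exactly the coefficients $q^{r|x|}$ and $1$ (the only point you should spell out is that a split middle term forces a split extension, via the retraction obtained by projecting to the locally free summand).

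The genuine gap is in (iv). The whole content of that relation is the determination of the numbers $N_i$ of classes in $\Ext^1(\cO(n),\cO(m))\cong H^1(\cO(m-n))$ whose middle term is $\cO(m+i)\oplus\cO(n-i)$, and this is exactly the step you defer (``pinned down by an explicit computation over $\FF_q$''). The cross-check $\sum_i N_i=q^{n-m-1}$ is one linear equation in $\lfloor (n-m)/2\rfloor+1$ unknowns, so for $n-m\geq 4$ it cannot certify the individual $N_i$; as written, the coefficients $q^{n-m+1}$ and $(q^2-1)q^{n-m-1}$ are asserted, not derived, and the needed count is not routine (the classes killed by some nonzero $s\in H^0(\cO(i))$ form a union of $i$-dimensional subspaces indexed by $\P H^0(\cO(i))$, whose cardinality is a Hankel-type rank count). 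A concrete way to close the gap that bypasses the stratification of $\Ext^1$ entirely: count subobjects directly. An injection $\cO(m)\hookrightarrow\cO(m+i)\oplus\cO(n-i)$ is a pair $(u,v)\in H^0(\cO(i))\times H^0(\cO(n-m-i))$, and its cokernel is $\cO(n)$ precisely when $u,v$ have no common zero; hence $\phi^{\beta_i}=\#\{(u,v)\ \text{coprime}\}/(q-1)$. For $i=0$ coprimality just means $u\neq 0$, giving $q^{n-m+1}$; for $i\geq 1$ the number of coprime pairs of binary forms of degrees $i$ and $n-m-i$ equals $(q-1)(q^2-1)q^{n-m-1}$ (inclusion--exclusion over the common zero divisor, or the zeta function of $\P^1$), giving $(q^2-1)q^{n-m-1}$ independently of $i$, exactly as stated in the theorem.
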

\begin{proof}
     Item $(i)$ is a consequence of Serre's duality for curves. Items $(ii), (iv)$ and $(v)$ are in \cite[Thm. 13]{baumann-kassel-01}. Item $(iii)$ can be obtained applying induction in $(ii)$. 
\end{proof}


\begin{rem}
    Special cases where $(i)$ occurs are when: $\cF$ is a locally-free sheaf and $\cG$ is a torsion sheaf; $\cF$ and $\cG$ are torsion sheaves with disjoint support; and when $ \cF=\bigoplus_{i=1}^s\cO(m_i)$ and $\cG=\cO(n)^{\oplus a}$ with $m_i<n$ for all $i=1, \ldots , s$.
\end{rem}


As in Theorem \ref{birgro}, if $\E \in \Bun_n (\P^1)$, then 
\[ \E \cong \cO(d_1) \oplus \cdots \oplus \cO(d_n)\]
for some integers $d_1 \leq  \cdots \leq d_n$. In what follows, all vector bundles over $\P^1$ are given as above, i.e.\ the degree of its line bundles decomposition increase as the indices increase.

The following proposition illustrates how we can apply the Hall products to identify the Hecke modifications and their multiplicities.

\begin{prop}  Let $x \in \P^1$ be a closed point of degree $d$ and let $\E \in \Bun_2\P^1$ be represented as $\E \cong \cO(d_1) \oplus \cO(d_2)$ with $d1 \leq d_2$. Let $\ell := \lfloor \frac{d-d_2+d_1-1}{2} \rfloor$. Then the Hecke modifications of $\E$ at $x$, and their multiplicities, are given as follows: 

\noindent     
If $d_2-d_1 \geq d$,
  \[ m_{x,1}(\E, \E')= \begin{cases}
      q^d & \text{if } \E'\cong \cO(d_1) \oplus \cO(d_2-d); \\
        1 & \text{if } \E'\cong \cO(d_1-d)\oplus \cO(d_2).
    \end{cases}\]

\noindent     
If $0<d_2-d_1 < d$ and $d_2+d-d_1$ is even,
      \[ m_{x,1}(\E, \E')= 
      \begin{cases}
      q^d-q^{d-1} & \text{if } \E'\cong \cO (\frac{d_2+d_1-d}{2}) \oplus \cO(\frac{d_2+d_1-d}{2}); \\
        q^{d_2-d_1+1} & \text{if } \E'\cong \cO(d_1)\oplus \cO(d_2-d); \\
        1 & \text{if } \E'\cong \cO(d_1-d)\oplus \cO(d_2); \\
        q^{2i+2} - q^{2i} & \text{if } \E'\cong \cO(d_1-i)\oplus \cO(d_2-d+i)  \text{ with $i=1, \ldots, \ell$.}
    \end{cases}\]

\noindent     
If  $0<d_2-d_1 < d$ and $d_2+d-d_1$ is odd,
    \[ m_{x,1}(\E, \E')= \begin{cases}
        q^{d_2-d_1+1} & \text{ if } \E'\cong \cO(d_1)\oplus \cO(d_2-d); \\
        1 & \text{if } \E'\cong \cO(d_1-d)\oplus \cO(d_2); \\
        q^{2i+2} - q^{2i} & \text{if } \E'\cong \cO(d_1-i)\oplus \cO(d_2-d+i) \ \text{ with $i=1, \ldots, \ell$.}
    \end{cases}\]

\noindent     
If $d_1=d_2$ and $d$ is even,
        \[m_{x,1}(\E, \E')= \begin{cases}
           q^d-q^{d-1} & \text{if } \E'\cong \cO (d_1 - \frac{d}{2}) \oplus \cO(d_1 - \frac{d}{2});\\
            q+1  & \text{if }  \E'\cong \cO(d_1-d)\oplus \cO(d_1);\\ 
            q^{2i+1} - q^{2i-1} & \text{if } \  \E'\cong \cO(d_1-d+i)\oplus \cO(d_1-i)  \text{ with $i=1, \ldots, \lfloor \frac{d-1}{2} \rfloor$.} 
        \end{cases} \]

\noindent     
If $d_1=d_2$ and $d$ is odd,
        \[m_{x,1}(\E, \E')= \begin{cases}
            q+1 & \text{if }  \E'\cong \cO(d_1-d)\oplus \cO(d_1);\\ 
            q^{2i+1} - q^{2i-1} & \text{if } \E'\cong \cO(d_1-d+i) \oplus \cO(d_1-i) \text{ with $i=1, \ldots, \lfloor \frac{d-1}{2}\rfloor $.}  
        \end{cases}\]         
\end{prop}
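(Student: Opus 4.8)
The plan is to compute the Hall product $\cK_x^{\oplus 1} * \E = \cK_x^{\oplus 1} * (\cO(d_1) \oplus \cO(d_2))$ and read off both the summands (which bundles $\E'$ appear as Hecke modifications) and their coefficients (the multiplicities $m_{x,1}(\E,\E')$), using the identity $m_{x,r}(\E',\E) = \phi^{\E}_{\cK_x^{\oplus r}\E'}$ from the remark following Theorem \ref{hallprop}. Since $r=1$ here, everything reduces to understanding the single product $\cK_x * (\cO(d_1)\oplus\cO(d_2))$. First I would use item $(i)$ of Theorem \ref{hallprop} together with associativity to write $\cK_x*(\cO(d_1)\oplus \cO(d_2))$ in a way that lets me apply the structural relations $(iv)$ and $(v)$; the natural route is to note $\cO(d_1)\oplus\cO(d_2) = \cO(d_1)*\cO(d_2)$ when $d_1 < d_2$ (by $(i)$, since $\Hom(\cO(d_2),\cO(d_1))=0$), or to instead compute $\cK_x*\cO(d_i)$ first via $(v)$ and then multiply by the remaining line bundle, reassociating as needed.

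The key computational engine is relation $(v)$, $\cK_x^{\oplus r}*\cO(m) = \cO(m+|x|)\oplus \cK_x^{r-1} + q^{r|x|}\cO(m)\oplus\cK_x^r$, specialized to $r=1$ and $|x|=d$, giving $\cK_x * \cO(m) = \cO(m+d) + q^d\, \cO(m)\oplus \cK_x$. The term $\cO(m+d)$ corresponds to lowering $\cO(m+d)$ by $d$ back to $\cO(m)$ in the Hecke picture; the term $q^d\,\cO(m)\oplus\cK_x$ still carries torsion and must be resolved by multiplying against the other line bundle, which turns the leftover skyscraper into a degree shift on that factor. Carrying out $\cK_x * \cO(d_1)*\cO(d_2)$ this way produces a sum of bundles of the form $\cO(d_1-i)\oplus\cO(d_2-d+i)$ together with the boundary terms $\cO(d_1)\oplus\cO(d_2-d)$ and $\cO(d_1-d)\oplus\cO(d_2)$; relation $(iv)$ is then needed to re-expand any products $\cO(n)*\cO(m)$ with $m<n$ that arise when the shifted degrees cross, and relation $(ii)$ (or $(iii)$) handles the case where two shifted line bundles coincide, as happens in the balanced middle term $\cO(\tfrac{d_1+d_2-d}{2})^{\oplus 2}$.

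The case analysis then tracks exactly which middle terms survive. When $d_2-d_1\geq d$, the degrees never cross, so by Corollary \ref{break} (the gap exceeds $|x|$) the modification splits and only the two extreme bundles appear, with the stated multiplicities $q^d$ and $1$ read directly off $(v)$. When $0<d_2-d_1<d$ the shifted degrees do cross, producing the family $\cO(d_1-i)\oplus\cO(d_2-d+i)$ for $i=1,\dots,\ell$ with $\ell=\lfloor\tfrac{d-d_2+d_1-1}{2}\rfloor$; the parity of $d_2+d-d_1$ determines whether the exactly-balanced term $\cO(\tfrac{d_1+d_2-d}{2})^{\oplus 2}$ occurs (even case) or not (odd case), which is precisely why the two subcases differ by that one line. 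The subcases $d_1=d_2$ are computed separately because there $\cO(d_1)\oplus\cO(d_2)=\cO(d_1)^{\oplus 2}$ is not a product of distinct line bundles, so one starts from $\cK_x*\cO(d_1)^{\oplus 2}$ using $(ii)$ and $(v)$; this is where the multiplicity $q+1$ for the extreme term and the coefficients $q^{2i+1}-q^{2i-1}$ come from, reflecting the extra automorphisms of $\cO(d_1)^{\oplus 2}$.

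The main obstacle I expect is the bookkeeping of the repeated applications of $(iv)$: each time a product $\cO(n)*\cO(m)$ with $m<n$ is re-expanded it contributes a whole sum $\sum_i (q^2-1)q^{n-m-1}\cO(m+i)\oplus\cO(n-i)$, and collecting the resulting coefficients into the clean closed forms $q^{2i+2}-q^{2i}$ (the $d_1<d_2$ cases) versus $q^{2i+1}-q^{2i-1}$ (the $d_1=d_2$ cases) requires carefully matching powers of $q$ across the telescoping sums and confirming that the boundary coefficients $q^{d_2-d_1+1}$, $q^d-q^{d-1}$, $q+1$, and $1$ emerge correctly. Verifying the upper limit $\ell$ of the crossing family and the even/odd dichotomy of the balanced middle term against the floor function is the delicate endpoint check; everything else is a disciplined but routine expansion driven by relations $(ii)$, $(iv)$, and $(v)$.
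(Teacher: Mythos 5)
Your proposal founders on the direction of the Hall product, and this is fatal rather than cosmetic. In the paper's convention, $\alpha * \gamma = \sum_{\beta} \phi^{\beta}_{\alpha\gamma}\,\beta$, where $\phi^{\beta}_{\alpha\gamma}$ counts subobjects of $\beta$ isomorphic to $\gamma$ with quotient $\alpha$; hence the identity you cite, $m_{x,1}(\E',\E) = \phi^{\E}_{\cK_x\E'}$, says that the multiplicity is the coefficient of $\E$ in the product $\cK_x * \E'$ taken with the \emph{sub}bundle $\E'$. The product $\cK_x * \E$ that you propose to compute instead has as its terms the sheaves $\beta$ of total degree $d_1+d_2+d$ fitting into $0 \to \E \to \beta \to \cK_x \to 0$, with coefficients $m_{x,1}(\E,\beta)$: it records how $\E$ sits inside \emph{larger} bundles, not how smaller bundles sit inside $\E$. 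This is already visible in relation $(v)$, which you quote: $\cK_x * \cO(m) = \cO(m+d) + q^d\,\cO(m)\oplus\cK_x$ \emph{raises} degrees. Consequently the output you claim for the computation, namely the bundles $\cO(d_1-i)\oplus\cO(d_2-d+i)$ together with $\cO(d_1)\oplus\cO(d_2-d)$ and $\cO(d_1-d)\oplus\cO(d_2)$, all of degree $d_1+d_2-d$, cannot occur in $\cK_x*\E$ at all.

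One might hope to repair this via Lemma \ref{dual} (for rank $2$ and weight $1$, the bundles occurring in $\cK_x*\E$ are exactly the $\E'(x)$ with $\E'$ a modification of $\E$, so the \emph{identification} of the modifications can be salvaged by twisting down), but the \emph{coefficients} are not preserved under this duality. Concretely, take $d=1$ and $\E = \cO\oplus\cO(1)$. The case $a+d=b$ of the relevant computation gives
\[ \cK_x * \bigl(\cO\oplus\cO(1)\bigr) = (q+1)\,\cO(1)\oplus\cO(1) + q\,\cO\oplus\cO(2) + q^2\,\cO\oplus\cO(1)\oplus\cK_x, \]
so your procedure, even after twisting by $\cO(-1)$, returns $q+1$ for $\cO\oplus\cO$ and $q$ for $\cO(-1)\oplus\cO(1)$, whereas the correct multiplicities (the case $d_2-d_1\geq d$ of the statement) are $q$ and $1$; only the latter sum to $\#\Gr(1,2)(\kappa(x)) = q+1$ as they must. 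So no single product $\cK_x*\E$ can yield the proposition. What the paper actually does---and what your computational machinery (relations $(i)$, $(ii)$, $(iv)$, $(v)$, with the leftover torsion passed to the right) would prove correctly if pointed in the right direction---is to compute $\cK_x*\E'$ for a \emph{general candidate} $\E' \cong \cO(a)\oplus\cO(b)$, in four cases ($a+d<b$, $a+d=b$, $a+d>b$, $a=b$), and then read off the coefficient with which the fixed bundle $\cO(d_1)\oplus\cO(d_2)$ appears in each of those products. Your case analysis (crossing versus non-crossing degrees, the parity of $d_2+d-d_1$ governing the balanced middle term, the separate treatment of $d_1=d_2$ via $(ii)$) mirrors the true structure, but it must be run over the candidates $\E'$, one product per candidate, not over $\cK_x*\E$. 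A minor further point: Corollary \ref{break} requires a degree gap strictly larger than $|x|$, so it does not by itself handle the boundary case $d_2-d_1=d$ of your first case.
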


\begin{proof}
   For  $\cE' \cong \cO(a)\oplus \cO(b) \in \Bun_2 (\P^1)$, there are four possibilities for the Hall product $\cK_x*\cE'$, as follows. 
   
If $a+d < b$,
\[ \begin{aligned}
    \cK_x*\cE'= & 
            (\cO(a+d)+q^d\cO(a)\oplus\cK_x)*\cO(b) \\
            =& \cO(a+d)\oplus \cO(b)+q^d\cO(a)\oplus \cO(b+d)+q^{2d}\cO(a)\oplus \cO(b)\oplus \cK_x. 
            \end{aligned}\]

If $a+d=b$,
\[ \begin{aligned}
  \cK_x*\cE'= (q+1)\cO(a+d)\oplus \cO(b)+q^d\cO(a)\oplus \cO(b+d) +q^{2d}\cO(a)\oplus \cO(b)\oplus \cK_x.
\end{aligned}\]

If $a+d>b$,
\[ \begin{aligned}
    \cK_x*\cE' &=\cO(a+d)*\cO(b)+ q^d\cO(a)*(\cO(b+d)+q^d\cO(b)\oplus \cK_x)\\
     & =  q^d\cO(b)\oplus \cO(a+d) +\sum_{i=1}^{\lfloor \frac{a+d-b}{2}\rfloor} (q^2-1)q^{a+d-b-1}\cO(b+i)\oplus \cO(a+d-i)\\
            & \quad +  q^d\cO(a)\oplus \cO(b+d)+q^{2d}\cO(a)\oplus \cO(b) \oplus  \cK_x. \\
\end{aligned}\]

If $a=b$,
\[ \begin{aligned}
    \cK_x*\cE' =& \tfrac{q-1}{q^2-1}\cK_x*\cO(a)* \cO(a) \\
             & = q^d\cO(a)\oplus\cO(a+d)+ \sum_{i=1}^{\lfloor d/2\rfloor}(q-1)q^{d-1}\cO(a+i)\oplus \cO(a+d-i) \\ 
             & \quad + q^{2d}\cO(a)\oplus\cO(a)\oplus \cK_x. \\
\end{aligned}\]

The proposition follows by examining the manner in which the vector bundle $\cO(d_1)\oplus \cO(d_2)$ appears in the above products. 
\end{proof}


\begin{rem}
    Note that in every case of the above propostion, in agreement with Theorem \ref{thm-grass}, given $\E \in \Bun_2\P^1$ 
    $$ \sum_{\E' \in \Bun_2(\P^1)} m_{x,1}(\E,\E') = q^d + 1 = \# \Gr (1,2) (\kappa(x)).$$
\end{rem}


\begin{thm}\label{hpro}
    Let $x \in \P^1$ be a closed point of degree $d$ and $\E := \bigoplus_{i=1}^n \cO (d_i) \in \Bun_n\P^1$. Write 
    \[\E \cong \bigoplus_{i=1}^{l_1} \cO (b_1) \oplus \bigoplus_{i=1}^{l_2} \cO (b_2) \oplus \cdots \oplus \bigoplus_{i=1}^{l_m} \cO (b_m),\] 
    with $b_i < b_j$ for $i < j$. Then 
\begin{equation}\label{prodform}
    \cK_x^{\oplus r} * \E= Q(\E)\sum_{i=0}^r \sum _{\sigma \in \Delta_i^n}q^{|\sigma|d} \hprod_{j=1}^n \cO(d_j+\sigma(j)d)*\cK_x^{r - i},
\end{equation} where $$ Q(\E)=\prod_{i=1}^m \prod_{j=0}^{l_i-1} \frac{q-1}{q^{l_i-j}-1}.$$
\end{thm}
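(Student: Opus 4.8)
The plan is to reduce the whole computation to Hall products of line bundles and then to push the torsion factor $\cK_x^{\oplus r}$ to the right, through the line bundles, one at a time, using the commutation relation of Theorem \ref{hallprop}(v).

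First I would rewrite $\E$ as a scalar multiple of a Hall product of line bundles. Grouping the summands of $\E$ into the blocks $\cO(b_i)^{\oplus l_i}$ and applying Theorem \ref{hallprop}(iii) inside each block produces the scalar $\prod_{j=0}^{l_i-1}(q^{l_i-j}-1)/(q-1)$; since $b_i<b_{i'}$ forces $\Hom(\cO(b_{i'}),\cO(b_i))=0$, Theorem \ref{hallprop}(i) then merges the distinct blocks into their direct sum. The result is the identity $\hprod_{j=1}^n\cO(d_j)=Q(\E)^{-1}\E$ in $H(\Coh(\P^1))$. By bilinearity of the Hall product, this gives $\cK_x^{\oplus r}*\E=Q(\E)\big(\cK_x^{\oplus r}*\hprod_{j=1}^n\cO(d_j)\big)$, so it suffices to prove
\[\cK_x^{\oplus r}*\hprod_{j=1}^n\cO(d_j)=\sum_{i=0}^r\sum_{\sigma\in\Delta_i^n}q^{|\sigma|d}\,\hprod_{j=1}^n\cO(d_j+\sigma(j)d)*\cK_x^{\oplus(r-i)}.\]

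I would prove this by induction on $n$, peeling off the leftmost line bundle. Applying Theorem \ref{hallprop}(v) to the leftmost pair $\cK_x^{\oplus r}*\cO(d_1)$ gives
\[\cK_x^{\oplus r}*\cO(d_1)*\hprod_{j=2}^n\cO(d_j)=\cO(d_1+d)*\Big(\cK_x^{\oplus(r-1)}*\hprod_{j=2}^n\cO(d_j)\Big)+q^{rd}\,\cO(d_1)*\Big(\cK_x^{\oplus r}*\hprod_{j=2}^n\cO(d_j)\Big),\]
and the induction hypothesis, applied with weight $r-1$ to the first bracket and weight $r$ to the second, expands both products over the $n-1$ line bundles $\cO(d_2),\dots,\cO(d_n)$. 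Writing $\tau$ for the restriction of $\sigma\in\Delta_i^n$ to $\{2,\dots,n\}$, the first summand accounts for all $\sigma$ with $\sigma(1)=1$ (so $\tau\in\Delta_{i-1}^{n-1}$) and the second for all $\sigma$ with $\sigma(1)=0$ (so $\tau\in\Delta_i^{n-1}$); these restrictions give the bijection that reassembles the double sum over $\Delta_i^n$.

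The step I expect to be the main obstacle is the bookkeeping of the powers of $q$. Each time relation (v) is used to retain, rather than absorb, a copy of $\cK_x$ at a position $j$ with $\sigma(j)=0$, it contributes $q^{s_j d}$ with $s_j=r-\sum_{l<j}\sigma(l)$ the multiplicity of $\cK_x$ still present at that stage, while a position with $\sigma(j)=1$ contributes no power of $q$. Accumulating these over all zero-positions of $\sigma$ gives the total exponent $d\sum_{j=1}^n(1-\sigma(j))\big(r-\sum_{l\le j}\sigma(l)\big)=|\sigma|d$, where one uses that $\sigma(j)=0$ permits replacing $\sum_{l<j}$ by $\sum_{l\le j}$. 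In the inductive formulation this amounts to the compatibility $|\sigma|=r+|\tau|$ when $\sigma(1)=0$ (the extra $r$ coming from the $q^{rd}$ factor above) and $|\sigma|=|\tau|$ when $\sigma(1)=1$ (with $|\tau|$ formed relative to the reduced weight $r-1$); checking that the exponents combine in exactly this way is the crux. Multiplying the resulting identity through by $Q(\E)$ then yields the stated formula.
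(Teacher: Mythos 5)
Your proof is correct and takes essentially the same route as the paper: both first rewrite $\E$ as $Q(\E)\hprod_{j=1}^n\cO(d_j)$ via Theorem \ref{hallprop}(i) and (iii), and then move $\cK_x^{\oplus r}$ past the line bundles one at a time using relation (v), tracking the accumulated powers of $q$. Your induction on $n$ is simply a formalization of the paper's informal iterative argument, and your exponent bookkeeping (reading $|\sigma|$ with respect to the ambient weight $r$, so that $|\sigma|=r+|\tau|$ when $\sigma(1)=0$ and $|\sigma|=|\tau|$ when $\sigma(1)=1$) is exactly the computation the paper intends.
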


\begin{proof}
Since $b_i < b_j$ for $i < j$, Theorem \ref{hallprop} items $(i)$ and $(iii)$, yields
\[\E =  \hprod_{i=1}^m\bigoplus_{j=1}^{l_i} \cO (b_j) = \hprod_{i=1}^{m} \left[ \left( \prod_{j=0}^{l_i-1}\dfrac{q-1}{q^{l_j-k}-1}\right) \hprod_{k=1}^{l_i}\cO(b_i) \right] = Q(\E)\hprod_{k=1}^n \cO(d_k).\]
Thus 
\begin{equation}\label{prodin}
     \cK_x^{\oplus r}*\E = Q(\E) \; \cK_x^{\oplus r}* \left( \hprod_{j=1}^n \cO(d_j)\right).
\end{equation} 

Let $\cL \in \Pic(\P^1)$ and $s \in \Z_{>0}$ with $s \leq r$. By item $(v)$ of Theorem \ref{hallprop}, 
\[\cK_x^{\oplus s}*\cL = \left( \cL \otimes \cO(d) \right) *\cK_x^{s-1}+q^{ds} \; \cL *\cK_x^{s}.\]
This means that the product of \( \mathcal{K}_x^{\oplus s} \) with a line subbundle of \( \mathcal{E} \) consists of two terms: (i) the direct sum of a line bundle whose degree is increased by \( d \), together with a torsion term whose weight is reduced by one, this term does not affect the Hall number, and; (ii) the direct sum of that line bundle with \( \mathcal{K}_x^{\oplus s} \), this term multiplies the Hall number by \( q^{ds} \). Since the reduction in the weight of skyscraper sheaf can occur at most \( r \) times, this process can only occurs at most \( r \) iterations.

Hence, we can choose to add \( d \) to the degrees of different invertible sheaves at most \( r \) times. Thus, the terms in the Hall product \eqref{prodin} are in bijection with elements of \( \bigcup_{i=0}^r \Delta_i^n \), associating the function \( \delta \in \Delta_s^n \) to  
\[
\left( \hprod_{j=1}^n \mathcal{O}(d_j + \delta(j)) \right) \oplus \mathcal{K}^{r - s}.
\]
Moreover, when the degree of an invertible sheaf \( \mathcal{L} \) does not change, the multiplicity remains the same. However, if the degree of \( \mathcal{L} \) increases by \( d \), the multiplicity is multiplied by \( q^{td} \), where \( t \) is equal to \( s \) minus the number of times the degrees of previous line subbundles of \( \mathcal{E} \) have changed. Therefore, the term associated to \( \delta \in \Delta_s^n \) will have the following multiplicity
\[ q^{(1-\delta(1))(s-\delta(1))d} \; q^{(1-\delta(2))(s-\delta(1)-\delta(2))d} \cdot \; \cdots \; \cdot  \ q^{(1-\delta(n))(s-\sum_{j=1}^n\delta(j))d} =q^{|\delta|d},\] 
and the weight of the skyscraper sheaf will have been reduced by \( s \), giving us the formula stated in the theorem.
 \end{proof}


\begin{cor}\label{prodsemtor}  In the notation of previous theorem, 
let $\pi^{vec}(\cK_x^{\oplus r}* \E)$ be the torsion-free terms in the product $\cK_x^{\oplus r}* \E$. Then
\[
    \pi^{vec}(\cK_x^{\oplus r}* \E) = Q(\E) \sum_{\delta \in \Delta_r^n}q^{|\delta|d} \hprod_{j=1}^n \cO(d_j+\delta(j)d)
\]
\end{cor}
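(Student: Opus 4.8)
The plan is to read off the torsion-free part directly from the product formula \eqref{prodform} of Theorem \ref{hpro}. Recall that $H(\Coh(\P^1))$ is free on the set of isomorphism classes $\Iso(\Coh(\P^1))$, so $\pi^{vec}$ is nothing but the linear map that keeps the coefficients of the locally free (equivalently, since $\P^1$ is a smooth curve, torsion-free) isomorphism classes and kills all the others. Hence it suffices to decide, summand by summand in \eqref{prodform}, which classes appearing in the expansion are torsion-free.

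First I would fix $i \in \{0, \dots, r\}$ and $\sigma \in \Delta_i^n$ and inspect the factor $\hprod_{j=1}^n \cO(d_j + \sigma(j)d) * \cK_x^{r-i}$. By Theorem \ref{hallprop} the line-bundle product $\hprod_{j=1}^n \cO(d_j + \sigma(j)d)$ is a $\Tilde{Z}$-combination of vector bundles $\gamma$ on $\P^1$; hence every isomorphism class $\beta$ occurring in the full product $\hprod_{j=1}^n \cO(d_j+\sigma(j)d) * \cK_x^{r-i}$ sits, by the very definition of the Hall product, in a short exact sequence
\[ 0 \longrightarrow \cK_x^{\oplus(r-i)} \longrightarrow \beta \longrightarrow \gamma \longrightarrow 0 \]
with $\gamma$ locally free. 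The crucial observation is that $\cK_x^{\oplus(r-i)}$ is then a torsion subsheaf of $\beta$, and since the quotient $\gamma$ is torsion-free, the torsion subsheaf of $\beta$ is exactly $\cK_x^{\oplus(r-i)}$. Thus $\beta$ is torsion-free precisely when $r-i=0$.

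I would then split into two cases. For $i<r$ every $\beta$ in the corresponding summand carries nonzero torsion, so the whole summand lies in $\ker(\pi^{vec})$ and drops out. For $i=r$ we have $\sigma \in \Delta_r^n$ and the factor $\cK_x^{r-i}=\cK_x^0$ is the unit of $H(\Coh(\P^1))$, so the summand collapses to $q^{|\sigma|d}\,\hprod_{j=1}^n \cO(d_j+\sigma(j)d)$, which is already a combination of vector bundles and is therefore fixed by $\pi^{vec}$. Applying $\pi^{vec}$ to \eqref{prodform} thus merely discards the $i<r$ terms, and summing the surviving $i=r$ terms against the global factor $Q(\E)$ yields exactly the asserted formula, after renaming $\sigma$ as $\delta$.

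I do not expect a real obstacle; the single point I would justify rather than gloss over is that no cancellation can occur between the summands. This holds because $H(\Coh(\P^1))$ is free on $\Iso(\Coh(\P^1))$ and the torsion part of a coherent sheaf on $\P^1$ is a canonical direct summand, so the torsion-free classes coming from the $i=r$ summand are genuinely different basis vectors from the torsion-bearing classes coming from the $i<r$ summands. Granting this, passing to $\pi^{vec}$ is purely bookkeeping and needs no further computation.
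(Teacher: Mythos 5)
Your proof is correct and takes essentially the same route as the paper's: the paper's one-line proof likewise just observes that the only terms of \eqref{prodform} in which the torsion sheaf $\cK_x$ does not appear are those with $i=r$. Your short-exact-sequence argument, showing that any class $\beta$ occurring in an $i<r$ summand has torsion subsheaf exactly $\cK_x^{\oplus(r-i)}\neq 0$ and hence dies under $\pi^{vec}$, is simply a careful justification of that observation.
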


\begin{proof} The proof follows by observing that the only terms in which the torsion sheaf $\cK_x$ does not appears in equation \eqref{prodform} are when $i=r$.    
\end{proof}


 \begin{df}
    Let $\E, \E' \in \Bun_n(\P^1)$, where $ \E' \cong \bigoplus_{i=1}^n \cO(d_i')$. Suppose that $[\E' \xrightarrow[r]{x} \E]$.  We say that $\delta \in \Delta_r^n$ \textbf{realizes} $[\E' \xrightarrow[r]{x} \E]$ if there exists $a\in \Z_{>0}$ such that $a\E$ appears in the Hall product
    \[\hprod_{i=1}^n \cO(d_i'+|x|\delta(i)). \]
    We denote the set of functions $\delta \in \Delta_r^n$ that realizes $[\E' \xrightarrow[r]{x} \E]$ by $\Delta_r^n(\E,\E',x)$. An element $\delta \in \Delta_r^n(\E', \E, x)$ is said to be $\textbf{maximal}$ if $|\delta| \geq |\sigma|$ for all $\sigma \in \Delta_r^n(\E', \E, x) $.
\end{df}


In order to prove the next theorem, we will need the following lemma.

\begin{lemma}
    As a set, the Grassmannian $\Gr(k,n)$ has a decomposition as the disjoint union
\[\Gr(k, n) = \bigsqcup_{\lambda \in J(k,n)} C_{\lambda}.\]
Where $J(k, n) = \{\lambda = (j_1, \ldots , j_k)\,\big|\, 1 \leq j_1 < \cdots < j_k \leq n\}$, and $C_{\lambda}$ denotes the set of $n \times n$-matrices $(a_{ij})_{n\times n}$ of the following form:
\begin{itemize}
    \item $a_{i,i}=1$ if $i \in \lambda$.
    \item $a_{i,j}=0$ if $j\in \lambda$, or $j<i$, or $i \in \lambda$ and $j\in \lambda$,
\end{itemize}
where we write $i \in \lambda$ to mean that $i$ appears as a entry of $\lambda$.
\end{lemma}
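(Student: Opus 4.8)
The plan is to identify $\Gr(k,n)$ with the set of $k$-dimensional subspaces $V \subseteq F^n$ (for the relevant field $F$, e.g.\ $F = \kappa(x)$) and to realise the asserted decomposition through reduced row echelon form. First I would recall that such a $V$ is the row space of some $k \times n$ matrix of rank $k$, and that Gauss--Jordan elimination produces a reduced row echelon representative $R(V)$. Since row operations amount exactly to the left $\GL_k(F)$-action permuting the bases of $V$, the form $R(V)$ depends only on $V$ and not on the chosen basis; this uniqueness of the reduced row echelon form is the backbone of the whole argument.

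Next, for $V \in \Gr(k,n)$ I would set $\lambda(V) := \{\,\mathrm{lead}(v) : v \in V \setminus \{0\}\,\}$, where $\mathrm{lead}(v)$ denotes the index of the first nonzero coordinate of $v$. This set is manifestly intrinsic to $V$, it has exactly $k$ elements (the leading indices of any echelon basis are distinct, and every nonzero vector inherits one of them), and it coincides with the set of pivot columns of $R(V)$; hence $\lambda(V) \in J(k,n)$. I would then embed $R(V)$ into an $n \times n$ matrix $A(V)$ by placing the echelon row whose pivot lies in column $i$ into the $i$-th row, and filling the remaining rows with zeros, so that the pivots sit on the diagonal positions indexed by $\lambda(V)$. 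Reading off the defining conditions of $C_\lambda$ shows that they amount precisely to $A(V)$ being in reduced row echelon form with pivots on the diagonal entries indexed by $\lambda(V)$ and all rows outside $\lambda(V)$ equal to zero: the diagonal entry is $1$ exactly at pivot indices, the pivot columns are cleared (so $a_{ij}=0$ for $j \in \lambda$, $i \ne j$), and the matrix is upper triangular because each pivot row vanishes to the left of its pivot (so $a_{ij}=0$ for $j<i$). Thus $A(V) \in C_{\lambda(V)}$.

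For the inverse I would send $(\lambda, A) \in \bigsqcup_\lambda C_\lambda$ to the span $V_A$ of the rows of $A$ indexed by $\lambda$; the pivot structure forces the submatrix on rows and columns indexed by $\lambda$ to be the identity, so those $k$ rows are linearly independent and $\dim V_A = k$. Since $A$ is already in reduced echelon form with pivot set $\lambda$, one checks $A(V_A)=A$ and $\lambda(V_A)=\lambda$, while conversely $V_{A(V)}=V$, so these are mutually inverse bijections. Because $\lambda(V)$ is a well-defined invariant of $V$, the images of distinct cells $C_\lambda$ are disjoint, and surjectivity of $V \mapsto \lambda(V)$ onto $J(k,n)$ gives exhaustiveness, yielding the disjoint-union decomposition. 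The only genuine obstacle is the uniqueness step, namely that both the pivot set $\lambda$ and the remaining free entries are pinned down by $V$ alone; I would dispatch this via the intrinsic description of $\lambda$ through leading indices, together with the standard fact that, once the pivot columns are fixed, the reduced normalisation determines every other entry uniquely. Finally I would remark that the free entries of a matrix in $C_\lambda$ — those at positions $(i,j)$ with $i \in \lambda$, $j \notin \lambda$, $j>i$ — range independently over $F$, so $C_\lambda \cong \A^{d_\lambda}$ with $d_\lambda$ their number, which is exactly the bookkeeping needed later to count points and recover $\#\Gr(k,n)$.
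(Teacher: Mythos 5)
Your proof is correct, but note that it is doing something the paper itself does not do: the paper's entire ``proof'' of this lemma is a pointer to the literature (``this is the standard Schubert-cell decomposition, see for instance \cite[Lemma 2.2]{alvarenga19}''), whereas you supply the self-contained argument that lies behind that citation. Your route --- identify $\Gr(k,n)$ with $k$-dimensional subspaces $V \subseteq F^n$, take the unique reduced row echelon basis, extract the pivot set $\lambda(V)$ intrinsically as the set of leading indices of nonzero vectors of $V$, embed the $k \times n$ echelon matrix into an $n \times n$ matrix with pivots on the diagonal positions indexed by $\lambda(V)$, and exhibit the two mutually inverse maps --- is exactly the standard proof of the cited fact, and your identification of the crux (uniqueness of the reduced echelon representative, i.e.\ invariance under the left $\GL_k(F)$-action) is accurate. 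What your version buys is self-containedness, and it also makes explicit the point the paper actually uses later, namely that the free entries of a matrix in $C_{\lambda}$ sit at positions $(i,j)$ with $i \in \lambda$, $j \notin \lambda$, $j > i$, so that $C_{\lambda} \cong \A^{d_{\lambda}}$ and the point count of Remark \ref{schubert} follows. What the paper's citation buys is brevity; nothing more is hidden in it.

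One caveat on fidelity to the statement: the bullet list defining $C_{\lambda}$ in the lemma is garbled. As literally written, the clause ``$i \in \lambda$ and $j \in \lambda$'' is subsumed by the clause ``$j \in \lambda$'', and no listed condition forces the rows with $i \notin \lambda$ to vanish, so the literal $C_{\lambda}$ would have too many free entries to be a Schubert cell. You silently prove the intended statement, in which non-pivot rows are zero and pivot columns are cleared --- this is precisely the version the paper itself records and uses in Remark \ref{schubert} --- and that is the right thing to prove; but you should flag the discrepancy explicitly rather than normalize the definition without comment.
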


\begin{proof} This is the standard Schubert-cell decomposition of the Grassmannian, see for instance \cite[Lemma 2.2]{alvarenga19}.
\end{proof}


\begin{rem}\label{schubert} 
Note that each $\lambda=(j_1, \ldots, j_k) \in J(k,n)$ can be identified with a $\delta_{\lambda} \in \Delta_k^n$, defined by 
$$\delta_{\lambda}(i)=1 \quad \text{if and only if} \quad  i \in \lambda.$$

We can represent a matrix in $C_{\lambda}$ as
\[C_{\lambda} = \left\{\begin{pmatrix}
      \delta_{\lambda}(1) & b_{12} & \ & \ & \cdots & b_{1n-1} \\
      \ & \delta_{\lambda}(2) & b_{23} & \ & \cdots & b_{2n-1} \\
      \ & \ & \ddots & \ & \ & \vdots \\
      \ & \ & \ & \ & \cdots & \delta_{\lambda}(n) \\
    
        \end{pmatrix}  \ \Bigg|  \ \ \ \ \begin{matrix}
            b_{ij} \in \FF_q \ \text{ with } \ \\
            b_{ij}=0 \text{ if $j<i$ or } \\
            \delta_{\lambda} (i)= 0, \text{ or  } \\ \delta_{\lambda} (i)=1,
            \text{ and } \delta_{\lambda}(j)= 1
        \end{matrix} \right\}.\]

Define the function $\omega: \Delta_r^n \to \Z$ by
$$\omega(\sigma)= \sum_{\{i\,|\,\sigma(i)=1\}} i.$$

Let $\delta :=(0, \ldots, 0, 1,\ldots , 1)\in \Delta_r^n$. Note that $\omega(\delta) \geq \omega(\delta_{\lambda})$ for all $\lambda \in J(k,n)$. The number  \( \omega(\delta) - \omega(\delta_{\lambda}) \) counts how many entries equal to 1 in \( \delta \) are moved to the left in \( \delta_{\lambda} \). Therefore, the number of free entries for a matrix in $C_{\lambda}$ is 
$$q^{\omega(\delta)-\omega(\delta_{\lambda})}.$$ 
In particular, the previous lemma implies
\[\#\Gr(n,k)=\sum_{\sigma \in \Delta_r^n} q^{\omega(\delta)-\omega(\sigma)}.\]
\end{rem}


\begin{thm}\label{spacedmultip} Let $x \in \P^1$ be a closed point of degree $d$ and $\E := \bigoplus_{i=1}^n \cO (d_i) \in \Bun_n\P^1$. Suppose that there exists an index $n_1 \in \{2, \ldots , n-1 \}$ such that $d_{n_1+1} - d_{n_1} \geq d$. 
Let $\E' := \bigoplus_{i=1}^n \cO(d_i') \in \Bun_n (\P^1)$ and define 
$$\E_1 := \bigoplus_{i=1}^{n_1} \cO(d_i),\ \ \E_1' := \bigoplus_{i=1}^{n_1} \cO(d_i'), \ \  
\E_2 := \bigoplus_{i=n_1+1}^n \cO(d_i) \ \ \text{and} \ \ \E_2' := \bigoplus_{i=n_1+1}^n \cO(d_i').$$
 Then   
 $$m_{x,r}(\E', \E)=m_{x,r_1}(\E'_1, \E_1) \ m_{x,r_2}(\E_2',\E_2) \ q^{r_2(n_1-r_1)|x|}.$$ 
where   $r_1 := (\deg(\E_1)- \deg(\E_1'))/d$ and $r_2:=r-r_1$. 
    
\end{thm}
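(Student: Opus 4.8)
The plan is to compute the multiplicity $m_{x,r}(\E',\E)$ directly from the Hall product $\cK_x^{\oplus r} * \E$, exploiting the hypothesis $d_{n_1+1}-d_{n_1}\geq d$ to split the computation into the two blocks $\E_1$ and $\E_2$. First I would invoke Corollary \ref{break} (or Theorem \ref{breaksemistable}) to observe that under the gap hypothesis any Hecke modification $[\E'\xrightarrow[r]{x}\E]$ decomposes as $[\E_1'\xrightarrow[r_1]{x}\E_1]$ and $[\E_2'\xrightarrow[r_2]{x}\E_2]$ with $r_1=(\deg\E_1-\deg\E_1')/d$ and $r_2=r-r_1$; in particular the only $\E'$ contributing are those of the stated block form, so $m_{x,r}(\E',\E)$ is the coefficient of $\E'$ in the torsion-free part $\pi^{vec}(\cK_x^{\oplus r}*\E)$ given by Corollary \ref{prodsemtor}.

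Next I would extract that coefficient using the formula
\[
\pi^{vec}(\cK_x^{\oplus r}*\E) = Q(\E)\sum_{\delta \in \Delta_r^n} q^{|\delta|d}\hprod_{j=1}^n \cO(d_j+\delta(j)d).
\]
The key combinatorial point is that, because of the degree gap, every $\delta \in \Delta_r^n(\E',\E,x)$ realizing $\E'$ splits as a pair $(\delta_1,\delta_2)$ with $\delta_1\in\Delta_{r_1}^{n_1}$ supported on the first block and $\delta_2\in\Delta_{r_2}^{n-n_1}$ supported on the second, and conversely each such pair glues to an admissible $\delta$. Moreover the Hall product itself factors: since the two blocks are separated by a degree gap larger than $d$, no raised line bundle from the first block can interact with the second, so $\hprod_{j=1}^n \cO(d_j+\delta(j)d)$ decomposes as a product of the block Hall products computing $m_{x,r_1}(\E_1',\E_1)$ and $m_{x,r_2}(\E_2',\E_2)$ respectively (after accounting for the $Q$-factors, noting $Q(\E)=Q(\E_1)Q(\E_2)$).

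The main obstacle — and the heart of the proof — is tracking the weight exponent $|\delta|d$ when $\delta$ is split as $(\delta_1,\delta_2)$. Writing out $|\delta|=\sum_{i=1}^n(1-\delta(i))(r-\sum_{j=1}^i\delta(j))$ and separating the sum at the index $n_1$, I expect the first-block indices to reproduce $|\delta_1|$ but with $r$ in place of $r_1$, and the second-block indices to reproduce $|\delta_2|$. The discrepancy is precisely the contribution of the $n_1-r_1$ zero-entries of $\delta_1$ each paired against the $r_2$ ones occurring in the second block: each such zero-index $i\leq n_1$ contributes an extra $\delta_2$-count of $r_2$ to the factor $(r-\sum_{j\leq i}\delta(j))$ compared to the corresponding term in $|\delta_1|$. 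Summing, this yields a uniform excess of $(n_1-r_1)r_2$ in the exponent, independent of the particular $\delta_1,\delta_2$. I would verify this bookkeeping carefully, concluding
\[
|\delta|d = |\delta_1|d + |\delta_2|d + (n_1-r_1)r_2 d,
\]
so that the cross term factors out of the double sum as $q^{r_2(n_1-r_1)|x|}$.

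Finally, combining the factorization of the sum over $(\delta_1,\delta_2)$, the factorization $Q(\E)=Q(\E_1)Q(\E_2)$, and the identification of each block coefficient with the corresponding multiplicity via Corollary \ref{prodsemtor} applied to $\E_1$ and $\E_2$ separately, I would read off
\[
m_{x,r}(\E',\E)=m_{x,r_1}(\E_1',\E_1)\,m_{x,r_2}(\E_2',\E_2)\,q^{r_2(n_1-r_1)|x|},
\]
which is the claimed formula. The one technical subtlety I would flag is making sure that distinct pairs $(\delta_1,\delta_2)$ never produce the same target bundle $\E'$ in a way that would force summing coefficients across the block boundary; this is guaranteed by the degree gap, which keeps the two blocks' degree multisets disjoint, so the coefficient of $\E'$ is genuinely a single product rather than a sum.
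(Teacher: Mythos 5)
Your argument is essentially the paper's proof of its \emph{first} case, and there it is sound: the splitting of each realizing $\delta$ into a pair $(\delta_1,\delta_2)$, the factorization of the Hall product across the gap, and the exponent bookkeeping $|\delta| = |\delta_1|+|\delta_2|+r_2(n_1-r_1)$ all match the paper (and your computation of the excess $(n_1-r_1)r_2$ is correct). The genuine gap is that you have silently strengthened the hypothesis: you write ``separated by a degree gap \emph{larger} than $d$,'' but the theorem assumes only $d_{n_1+1}-d_{n_1}\geq d$. In the boundary case $d_{n_1+1}-d_{n_1}=d$ with $d_{n_1}'=d_{n_1+1}'\,(=d_{n_1})$, every structural claim your proof rests on is false: the degree multisets of the two blocks of $\E'$ are \emph{not} disjoint, not every realizing $\delta\in\Delta_r^n(\E',\E,x)$ is a concatenation of block-realizing functions, and $Q(\E')\neq Q(\E_1')Q(\E_2')$. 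Concretely, take $\E=\cO\oplus\cO\oplus\cO(d)$, $n_1=2$, and $\E'=\cO(-d)\oplus\cO\oplus\cO$ (so $r=2$, $r_1=r_2=1$). Then $\delta=(1,1,0)$ realizes $[\E'\xrightarrow[2]{x}\E]$ — raising the middle $\cO$ by $d$ lands it in the second block's degree — yet its restriction $(1,1)$ to the first block has weight $2\neq r_1$, so it is not a concatenation; moreover $Q(\E')=\tfrac{1}{q+1}\,Q(\E_1')Q(\E_2')$.

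This boundary case is where the paper spends the second half of its proof: it parametrizes the extra realizing functions by ``exchanging'' raised indices across the boundary among the $\alpha+\beta$ line subbundles of $\E'$ of the common degree $d_{n_1}$, shows via the Schubert-cell identity of Remark \ref{schubert} that these contributions sum to a factor $\#\Gr(\beta,\alpha+\beta)$, and shows separately that $Q(\E')/\bigl(Q(\E_1')Q(\E_2')\bigr)=\#\Gr(\beta,\alpha+\beta)^{-1}$, so the two corrections cancel and the product formula survives. In your write-up the two errors (too small a set of $\delta$'s, too large a $Q$-factor) happen to compensate, so your final formula is right, but nothing in your argument establishes or even notices this cancellation — so as written the proof does not cover the full hypothesis of the theorem. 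A minor, fixable slip in the same direction: $m_{x,r}(\E',\E)=\phi^{\E}_{\cK_x^{\oplus r}\E'}$ is the coefficient of $\E$ in $\cK_x^{\oplus r}*\E'$, not the coefficient of $\E'$ in $\cK_x^{\oplus r}*\E$; accordingly the relevant prefactor throughout is $Q(\E')$, not $Q(\E)$, and the Hall products should be taken over the degrees $d_j'+\delta(j)d$.
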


\begin{proof}
    The Corollary \ref{break} implies that $m_{x,r} (\E', \E) \neq 0 $ if and only if $m_{x,r_1} (\E_1', \E_1) \neq 0 $ and $m_{x,r_2} (\E_2', \E_2) \neq 0 $. Then, the equality is well-defined even if $\E'$ is not a Hecke modification of $\E$.
    
    First, suppose that $d_{n_1+1}-d_{n_1} > d$. By Corollary \ref{greater0}, the condition $d_{n_1+1}-d_{n_1} >|x|$ implies that $d_i'<d_{\ell}'$ for all $i \in \{1, \ldots, n_1\}$ and $\ell \in \{n_1+1, \ldots, n\}$. Our goal is to apply the formula of Corollary \ref{prodsemtor} to compute $m_{x,r}(\E',\E)$ in function of $m_{x,r_1}(\E_1',\E_1)$ and $m_{x,r_2}(\E_2',\E_2)$. Denote:
    \begin{align*}
        c(\E'):= & \sum_{\sigma \in\Delta_r^n(\E',\E)}q^{|\sigma|}\hprod_{j=1}^n \cO(d_j+\sigma(j)d), \\
        c(\E_1'):= & \sum_{\sigma_1 \in\Delta_{r_1}^{n_1}(\E_1',\E_1)}q^{|\sigma_1|}\hprod_{j=1}^{n_1} \cO(d_j+\sigma_1(j)d), \\
        c(\E_2'):=& \sum_{\sigma_2 \in\Delta_{r_2}^{n-n_1}(\E_2',\E_2)}q^{|\sigma_2|}\hprod_{j=1}^{n-n_1} \cO(d_{n_1+j}+\sigma_2(n_1+j)d).
    \end{align*}

    By the properties of the Hall product in Theorem \ref{hallprop},
    \begin{equation}\label{breakbreak}
        Q(\E') = Q(\E_1') Q(\E_2')
    \end{equation} 
    and
    \begin{equation}\label{prodbreak}
   \sum _{\sigma \in \Delta_{r}^n} \hprod_{i=1}^n \cO(d_i)=  \sum _{\sigma \in \Delta_{r}^n} \left( \hprod_{i=1}^{n_1} \cO(d_i'+\sigma(i)d) \bigoplus \hprod_{i=n_1+1}^n \cO(d_i'+\sigma(i)d)\right). 
    \end{equation}
    
Next we observe that any $\delta \in \Delta_r^n(\E',\E,x)$ can be writen as a concatenation of an element $\delta_1 \in \Delta_{r_1}^{n_1}(\E_1',\E_1,x)$ and an element $\delta_2 \in \Delta_{r_2}^{n-n_1}(\E_2',\E_2,x)$. Thus, 
\begin{align*}
    |\delta|=\sum_{i=1}^n[(1-\delta(i))(r-\sum_{\ell=1}^i \delta(\ell))], \ \ \ \  & \\
    |\delta_1|=\sum_{i=1}^{n_1}[(1-\delta_1(i))(r_1-\sum_{\ell=1}^i \delta(\ell))] & =\sum_{i=1}^{n_1}[(1-\delta(i))(r_1-\sum_{\ell=1}^i \delta(\ell))], \\
    |\delta_2|=\sum_{i=1}^{n_2}[(1-\delta_2(i))(r_2-\sum_{\ell=1}^i \delta(\ell))] & =\sum_{i=n_1+1}^{n}[(1-\delta(i))\underbrace{(r_2-\sum_{\ell=n_1+1}^i \delta(\ell))]}_{r-\sum_{\ell=1}^n \delta(\ell)}.
\end{align*}
Furthermore,
\[|\delta| - |\delta_1|- |\delta_2|= \sum_{i=1}^{n_1}(1-\delta(i))(r-r_1)=r_2(n_1-r_1).\]

Hence, for any $\delta$ that realizes the Hecke modification $[\E' \xrightarrow[r]{x} \E]$ and appears in the formula of Corollary \ref{prodsemtor}, the multiplicity of the term corresponding to $\E$ satisfies
\[ q^{|\delta|d}\hprod_{i=1}^n \cO(d_i'+\delta(i)d) = 
    q^{(|\delta_1|+|\delta_2|+r_2(n_1-r_1))d}\hprod_{i=1}^{n_1} \cO(d_i'+\delta_1(i)d) * \hprod_{i=n+1}^n \cO(d_i'+\delta(i)d).\]

Note that the exponent $r_2(n_1-r_1)$ does not depends on $\delta$, then 
\begin{align*}
    Q(\E') \ c(\E') = \ q^{(r_2(n_1-r_1))d} \  
    Q(\E_1') \ c(\E_1') \ Q(\E_2') \ c(\E_2'),
\end{align*}
which completes the proof in this case. 

Now suppose that $d_{n_1+1} - d_{n_1} = d$. If $d_{n_1+1}'> d_{n_1}'$, the identities \eqref{breakbreak} and \eqref{prodbreak} are true, then we can apply the same argument as above. Therefore, we are left to the case when $d_{n_1+1}'= d_{n_1}'$, that is $d_{n_1}'=d_{n_1}=d_{n_1+1}'$, and $d_{n_1+1}=d_{n_1}+d$. 

Let $\alpha$ be the number of line bundles $\cO(d_i')$  with $i \in \{1, \dots n_1\}$ such that $d_i'=d_{n_1}$. Observe that $\alpha \leq n_1-r_1$. Similarly, let $\beta$ be the number of line bundles $\cO(d_i')$ with $i \in \{n_1+1, \ldots, n\}$ such that $d_i'= d_{n_1}$. Observe that $\beta \leq r_2$. Since the greater degree in $\E_1$ is $d_{n_1}$, if $\sigma_1 \in \Delta_{r_1}^{n_1}(\E_1',\E_1)$, then
\[\sigma_1=(\ell_1, \ldots, \ell_{n_1-\alpha}, \underbrace{0, 0, \ldots, 0}_{\alpha \ \text{times}}).\]
Since the smaller degree in $\E_2$ is $d_{n_1}+d$, if $\sigma_2 \in \Delta_{r_2}^{n_2}(\E_2',\E_2)$, then
\[\sigma_2=(\underbrace{1, 1, \ldots, 1}_{\beta \ \text{times}}, \ell_{n_1+\beta+1}, \ldots, \ell_{n}).\]
 Where $\ell_i \in \{0,1\}$ for $i=1, \ldots, n$. 
 
 We denote the concatenation of $\sigma_1 \in \Delta_{r_1}^{n_1}(\E_1',\E_1)$ and $\sigma_2 \in \Delta_{r_2}^{n_2}(\E_2',\E_2)$ by $\sigma_1 \oplus \sigma_2$. Let $\sigma = \sigma_1 \oplus \sigma_2 \in \Delta_r^n$. Then $\sigma$ realizes the Hecke modification $[\E' \xrightarrow[r]{x} \E]$. We observe that unlike the previous case, not all elements in $\Delta_{r}^n(\E',\E)$ can be obtained by concatenation of such elements $\sigma_1$ and $\sigma_2$.
Let $m :=\min\{\alpha, \beta\}$ and $t\in \{1, \ldots, m \}$. We can construct a $\sigma' \in \Delta_r^n(\E',\E)$ by exchanging $t$ elements equals to $1$ in $\sigma_2(1), \ldots, \sigma_2(\beta)$ with $t$ elements equals to $0$ in $\sigma_1(n_1-\alpha), \ldots, \sigma_1(n_1)$. Note that this construction gives us $\sigma_1' \in \Delta_{r_1+t}^{n_1}$ and $\sigma_2'\in \Delta_{r_2-t}^{n-n_1}$ such that $\sigma'=\sigma_1'\oplus \sigma_2'$. For each choice of changing positions, we obtain an element $\delta \in \Delta_{\beta}^{\alpha+\beta}$, such that
\[\sigma'=(\ell_1, \ldots, \ell_{n_1-\alpha})\oplus \delta \oplus (\ell_{n_1+\beta+1}, \ldots, \ell_n).\] 
We denote the corresponding element in $\Delta_{r}^n(\E',\E)$ by $\sigma_{\delta}$, with $\sigma$ associated with
\[\delta_0=(\underbrace{0, \ldots, 0}_{\alpha \ \text{times}}, \underbrace{1, \ldots, 1}_{\beta \ \text{times}}).\]
Let $\omega$ be the function defined in Remark \ref{schubert} and let $s(\delta):=\omega(\delta_0)-\omega(\delta)$. Then 
\[|\sigma_{\delta}|=|\sigma|-s(\delta)\]
and
\[\hprod_{i=1}^n\cO(d_i'+\sigma(i)d)=q^{s(\delta)(d+1)}\hprod_{i=1}^n\cO(d_i'+\sigma_{\delta}(i)d).\]
Since any $\rho \in \Delta_{r}^n(\E',\E)$ is of the form $\sigma_{\delta}$, for some $\delta \in \Delta_{\alpha}^{\alpha+\beta}$ and $\sigma=\sigma_1 \oplus \sigma_2$, with $\sigma_1 \in \Delta_{r_1}^{n_1}(\E_1',\E_1)$ and $\sigma_2 \in \Delta_{r_2}^{n-n_1}(\E_2',\E_2)$, then
\begin{align*}
   c(\E')= & c(\E_1') *  c(\E_2')\;  q^{r_2(n_1-r_1)d} \sum_{\delta \in \Delta_{\beta}^{\beta+\alpha}} q^{-s(\delta) d} q^{s(\delta)(d+1)}  \\
   =& \sum_{\sigma_1 \in \Delta_{r_1}^{n_1}(\E_1',\E_1)} \ \ \sum_{\sigma_2 \in \Delta_{r_2}^{n_2}(\E_2',\E_2)}\left( q^{(|\sigma_1|+|\sigma_2|+r_2(n_1-r_1)) d} \hprod_{i=1}^n\cO(d_i'+\sigma_1(i)d)\right) \sum_{\delta \in \Delta_{\beta}^{\beta+\alpha}} q^{-s(\delta)}.
\end{align*}
Remark \ref{schubert} implies that $\sum_{\delta \in \Delta_{\beta}^{\beta+\alpha}} q^{-s(\delta)}=\# \Gr(\beta,\alpha+\beta)$.

If we represent $\E' \cong \bigoplus_{i=1}^{l_1} \cO (b_1) \oplus \bigoplus_{i=1}^{l_2} \cO (b_2) \oplus \cdots \oplus \bigoplus_{i=1}^{l_m} \cO (b_m)$, as in Theorem \ref{hpro}, where $b_i < b_k$ for $i < k$. Then,
$$ Q(\E')=\prod_{i=1}^m \prod_{k=0}^{l_i-1} \frac{q-1}{q^{l_i-k}-1}.$$
We can also decompose $\E_1'$ and $\E_2'$ as a sum of line bundles of same degree
\[\E_1' \cong \bigoplus_{i=1}^{l_1} \cO (b_1) \oplus \cdots \oplus \bigoplus_{i=1}^{l_{m_1}} \cO (b_{m_1}), \ \ \E_2' \cong \bigoplus_{i=1}^{l_{m_1+1}} \cO (b_{m_1+1})  \oplus \cdots \oplus \bigoplus_{i=1}^{l_m} \cO (b_m).\]
With $b_{m_1}=b_{m_{1}+1}=d_{n_1}$, $l_{m_1}=\alpha$ and $l_{m_1+1}=\beta$. This implies that

\begin{align*}
    \dfrac{Q(\E')}{Q(\E_1')Q(\E_2')} & = \prod_{k=0}^{\alpha+\beta-1}\dfrac{q-1}{q^{\alpha+\beta - k}-1} \ \ \prod_{k=0}^{\alpha-1}\dfrac{q^{\alpha- k}-1}{q-1}  \ \  \prod_{k=0}^{\beta-1}\dfrac{q^{\beta - k}-1}{q-1}&\\
     & = \prod_{k=0}^{\beta-1}\dfrac{q^{\beta-k}-1}{q^{\alpha+\beta - k}-1}& \\
     & = \# \Gr(\beta, \alpha+\beta)^{-1}.&
\end{align*}
Therefore 
\begin{align*}
    Q(\E') c(\E')= \dfrac{q^{(r_2(n_1-r_1))d}\# \Gr(\beta, \alpha+\beta) 
    Q(\E_1') c(\E_1') Q(\E_2') c(\E_2')}{\#\Gr(\beta, \alpha+\beta)},
\end{align*}
which implies 
\[ m_{x,r}(\E', \E)=m_{x,r_1}(\E'_1, \E_1) \; m_{x,r_2}(\E_2',\E_2) \; q^{r_2(n_1-r_1)d},\] 
as desired.
\end{proof}


An application of computing such Hall numbers is the following proposition, which was inspired in an  example from \cite{baumann-kassel-01}. Denote by $\FF_q[S,T]_d^h$ the set of homogeneous polynomials of degree $d$ in the variables $S,T$ over $\FF_q$. Moreover, we denote by $\mathrm{Mat}_n(\FF_q[S,T])$ the set of $n \times n$ matrices over $\FF_q[S,T]$. 

\begin{prop} Let $x \in \P^1$ be a closed point of degree $d \geq 2$. 
Let $F(S,T) \in \FF_q[S,T]_d^h$ irreducible corresponding to $x$ and $\E' = \bigoplus_{i=1}^n \cO(a_i), \E = \bigoplus_{i=1}^n \cO(b_i) \in \Bun_n (\P^1)$. Then the number of monomorphisms $\varphi: \E' \to \E$  such that $\det(\varphi)=u F(S,T)$, where $u \in \FF_q^*$, is 
\[ m_{x,1}(\E',\E) \cdot \#\Aut(\E').\] 
To be more precise, with respect to the left group action of $\Aut(\E')$ on $\rm{Mat}_n(\FF_q[S,T])$, there are
exactly $m_{x,1}(\E,\E')$ classes of matrices
\[\overline{\varphi} \in \mathrm{Mat}_n(\FF_q[S,T]) / \Aut(\E') \ \text{ such that } \ \det(\varphi)=u F(S,T).\]
Moreover, up to column permutations,
\[\varphi_{ij} \in \left\{
\begin{array}{ll}
\FF_q[S,T]_{b_j-a_i}^h & \text{if} \ a_i-b_j \geq 0, \\
\{0\} & \text{if} \ a_i-b_j < 0.
\end{array} \right.\]
where $\varphi = (\varphi_{ij})_{ij}$. 
\end{prop}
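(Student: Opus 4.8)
The plan is to translate everything into the language of subsheaves of $\E$ and then count orbits of the free $\Aut(\E')$-action. First I would record the dictionary coming from Theorem \ref{birgro}: writing $\E'=\bigoplus_i\cO(a_i)$ and $\E=\bigoplus_j\cO(b_j)$, a morphism $\varphi\colon\E'\to\E$ is the same datum as a matrix $(\varphi_{ij})$ whose entry from the $i$-th summand $\cO(a_i)$ to the $j$-th summand $\cO(b_j)$ lies in $\Hom(\cO(a_i),\cO(b_j))=H^0(\P^1,\cO(b_j-a_i))=\FF_q[S,T]^h_{b_j-a_i}$, which vanishes unless $b_j\geq a_i$; this yields, after reordering summands of equal degree (the ``up to column permutations''), the entrywise assertion. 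Under this dictionary $\det(\varphi)$ is homogeneous of degree $\deg(\E)-\deg(\E')$, so $\det(\varphi)=uF$ with $\deg F=d$ forces $\deg(\E)-\deg(\E')=d$, and since $uF\neq0$ every such $\varphi$ is automatically a monomorphism (its kernel is a torsion subsheaf of the torsion-free sheaf $\E'$, hence zero). Thus I may simply count matrices $\varphi$ with $\det(\varphi)\in\FF_q^{*}\,F$.

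Next I would build the bridge to Hecke modifications. Given $\varphi$ with $\det(\varphi)=uF$, the cokernel $\cK:=\operatorname{coker}(\varphi)$ is a torsion sheaf whose support is $V(\det\varphi)=V(F)$; because $F$ is \emph{irreducible} of degree $d$ corresponding to $x$, this support is the single closed point $x$. On the other hand the $\FF_q$-length of $\cK$ equals $\deg(\det\varphi)=d$, so its length over the discrete valuation ring $\cO_x$ is $d/|x|=1$, and a length-one module over a DVR is its residue field; hence $\cK\cong\kappa(x)=\cK_x$. So every such $\varphi$ fits into $0\to\E'\xrightarrow{\varphi}\E\to\cK_x\to0$, realizing a weight-$1$ Hecke modification. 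Conversely, for any subsheaf $\cG\subseteq\E$ with $\E/\cG\cong\cK_x$ and any isomorphism $\psi\colon\E'\to\cG$, the divisor of $\det(\iota_\cG\circ\psi)$ is $[x]$, so $\det(\iota_\cG\circ\psi)\in\FF_q^{*}F$.

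The counting step is then organized by the $\Aut(\E')$-action. I would send each admissible $\varphi$ to its image $\operatorname{im}(\varphi)\subseteq\E$, a subsheaf isomorphic to $\E'$ with quotient $\cK_x$. Two admissible morphisms have the same image if and only if they differ by precomposition with an element of $\Aut(\E')$: if $\operatorname{im}(\varphi_1)=\operatorname{im}(\varphi_2)=\cG$, both are isomorphisms onto $\cG$ and $g:=\varphi_1^{-1}\varphi_2\in\Aut(\E')$ satisfies $\varphi_2=\varphi_1\circ g$. Since $\varphi$ is a monomorphism this action is free ($\varphi\circ g=\varphi\Rightarrow g=\id$), so the fibre over each $\cG$ is a single free orbit, namely the torsor $\Iso(\E',\cG)$ of cardinality $\#\Aut(\E')$. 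Recalling (Definition \ref{def-heckemod} and Theorem \ref{thm-grass}) that isomorphism classes of weight-$1$ Hecke modifications of $\E$ at $x$ correspond bijectively to the subsheaves $\cG\subseteq\E$ with $\E/\cG\cong\cK_x$, and that $m_{x,1}(\E',\E)$ counts exactly those with $\cG\cong\E'$, I conclude that the number of $\varphi$ with $\det(\varphi)=uF$ equals $m_{x,1}(\E',\E)\cdot\#\Aut(\E')$ and that the $\Aut(\E')$-orbits are $m_{x,1}(\E',\E)$ in number.

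The main obstacle I expect is the identification $\operatorname{coker}(\varphi)\cong\cK_x$: one must argue cleanly that irreducibility of $F$ confines the support to the single point $x$, and that the length bookkeeping ($\FF_q$-length $d$ against $\cO_x$-length $1$) forces the length-one, residue-field structure. This is exactly where the hypotheses ``$F$ irreducible of degree $d$'' and ``weight $1$'' enter; once it is in hand, freeness of the action and the bijection with subsheaves are formal, and the two displayed conclusions follow together.
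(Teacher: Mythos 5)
Your proposal is correct and follows essentially the same route as the paper: translate $\varphi$ into a matrix of homogeneous forms via $\Hom(\cO(a_i),\cO(b_j))$, identify the morphisms with $\det(\varphi)=uF$ as exactly those realizing weight-$1$ Hecke modifications $[\E' \xrightarrow[]{x} \E]$, and then count them as free $\Aut(\E')$-orbits lying over the $m_{x,1}(\E',\E)$ subsheaves $\cG\subseteq\E$ with $\cG\cong\E'$ and $\E/\cG\cong\cK_x$. The only divergence is technical and minor: where the paper's terse proof invokes the Smith Normal Form of $\varphi$ to get $\operatorname{coker}(\varphi)\cong\cK_x$, you obtain the same identification from $\supp(\operatorname{coker}\varphi)=V(F)=\{x\}$ together with the length bookkeeping ($\FF_q$-length $d$, hence $\cO_x$-length $1$), which is an equivalent local argument.
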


\begin{proof} Since    
\[
    \Hom(\cO(a_i),\cO(b_j))=\left\{
\begin{array}{ll}
\FF_q[S,T]_{b_j-a_i}^h & \text{if} \ a_i-b_j \geq 0 \\
\{0\} & \text{if} \ i-j < 0
\end{array} \right. .
\] 
The proposition follows  from the Smith Normal Form of the morphism $ \varphi: \E' \to \E $ and from the fact that $\varphi$ induces a Hecke modification $[\E' \xrightarrow[]{x} \E]$. 
\end{proof}


\section{Explicit Hecke modifications}
\label{sec-explicit calculations}

In this section, we continue to assume that \( k = \mathbb{F}_q \) is the finite field with \( q \) elements, and we let \( X \) be the projective line over \( \mathbb{F}_q \).
We apply the structure results from previous section to describe (explicitly) the Hecke modifications, and their multiplicities, for every vector bundle over $\P^1$.

\begin{thm} Let $x \in \P^1$ be a closed point of degree $d$, and let $\E\in \Bun_n\P^1$ be such that $\E= \bigoplus_{i=1}^n \cO (d_i)$. Suppose $d_{i+1}-d_i \geq d$ for all $i \in \{1,2, \ldots, n-1\} $. Given $\delta \in \Delta_r^n$, define 
    $$\cE' :=  \bigoplus_{i=1}^n \cO(d_i-\delta(i)d).$$ 
    Then the multiplicity of the Hecke modification $[\E' \xrightarrow[r]{x} \E]$ is given by
    \[m_{x,r}(\E', \E)=q^{|\delta|d}.\]
    
\end{thm}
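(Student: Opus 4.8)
The plan is to induct on the rank $n$, using Theorem \ref{spacedmultip} to peel off the top line bundle one step at a time and then checking that the exponents it emits telescope to $|\delta|d$. Note first that $\E'=\bigoplus_{i=1}^n\cO(d_i-\delta(i)d)=\bigoplus_{i=1}^n\cO(d_i)\otimes\cO(-\delta(i)x)$ is genuinely a Hecke modification of $\E$ of weight $r=\#\supp(\delta)$ by Lemma \ref{existtrivial}, so the multiplicity in question is well defined and positive.

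Before the induction I would record the combinatorial meaning of $|\delta|$. Writing $s_i:=\sum_{j=1}^i\delta(j)$ and observing that $r-s_i=\#\{j>i:\delta(j)=1\}$, the defining sum collapses to
\[ |\delta| = \sum_{i\,:\,\delta(i)=0}\#\{\,j>i:\delta(j)=1\,\} = \#\{(i,j):i<j,\ \delta(i)=0,\ \delta(j)=1\}, \]
that is, $|\delta|$ counts the ``$0$ before $1$'' inversions of $\delta$. This is precisely the quantity the induction must reproduce.

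For the base cases $n=1$ and $n=2$ I argue directly. When $n=1$ the only modifications have weight $0$ or $1$, each of multiplicity $1$ (the weight-$1$ case from item $(v)$ of Theorem \ref{hallprop}, or from Theorem \ref{thm-grass} since $\Gr(0,1)$ is a point), and $|\delta|=0$ in both. When $n=2$ the hypothesis reads $d_2-d_1\geq d$, so the weight-$1$ multiplicities are read off from the corresponding case of the rank-$2$ proposition: $q^d$ for $\delta=(0,1)$ (where $|\delta|=1$) and $1$ for $\delta=(1,0)$ (where $|\delta|=0$); the weight-$0$ and weight-$2$ cases are the trivial modification and the full twist $\E'=\E(-x)$, each of multiplicity $1$ with $|\delta|=0$ by Theorem \ref{thm-grass}. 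For the inductive step with $n\geq 3$, the gap $d_n-d_{n-1}\geq d$ lets me apply Theorem \ref{spacedmultip} at $n_1=n-1\in\{2,\dots,n-1\}$, splitting $\E=\E_1\oplus\cO(d_n)$ and $\E'=\E_1'\oplus\cO(d_n-\delta(n)d)$ with $\E_1=\bigoplus_{i=1}^{n-1}\cO(d_i)$, $\E_1'=\bigoplus_{i=1}^{n-1}\cO(d_i-\delta(i)d)$, and $r_2=\delta(n)$, $r_1=r-\delta(n)$. The theorem gives
\[ m_{x,r}(\E',\E)=m_{x,r_1}(\E_1',\E_1)\;m_{x,r_2}\big(\cO(d_n-\delta(n)d),\cO(d_n)\big)\;q^{\,r_2((n-1)-r_1)d}. \]
The rank-$1$ factor is $1$, and the inductive hypothesis applied to $\E_1$ (its gaps are still $\geq d$, its data is $\delta':=\delta|_{\{1,\dots,n-1\}}$) gives $m_{x,r_1}(\E_1',\E_1)=q^{|\delta'|d}$. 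It then suffices to verify $|\delta|=|\delta'|+r_2((n-1)-r_1)$: the inversions of $\delta$ split into those lying in $\{1,\dots,n-1\}$, contributing $|\delta'|$, and those with $j=n$, which are nonempty only when $\delta(n)=1$ and then number $\#\{i<n:\delta(i)=0\}=(n-1)-r_1$; since $r_2=\delta(n)$, this extra count is exactly $r_2((n-1)-r_1)$, closing the induction.

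The main obstacle is purely bookkeeping: matching the exponent $r_2((n-1)-r_1)$ produced by Theorem \ref{spacedmultip} with the increment in the inversion count, and respecting the index restriction $n_1\in\{2,\dots,n-1\}$ of that theorem—which is the reason $n=2$ must be handled as a separate base case rather than folded into the recursion. Everything else reduces to direct invocations of multiplicities already established.
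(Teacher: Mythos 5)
Your proof is correct, and it runs on the same engine as the paper's own proof: iterated application of Theorem \ref{spacedmultip} to split off one line bundle at a time, with the emitted exponents telescoping to $|\delta|d$. The difference is the direction of the peeling and the bookkeeping that results. The paper peels off the \emph{lowest}-degree summand at each step, so the product of the factors $q^{r_j(1-\delta(j))d}$ is literally the defining sum of $|\delta|$ and no induction or reformulation is needed; but each such application of Theorem \ref{spacedmultip} uses the split $n_1=1$, which lies outside that theorem's stated hypothesis $n_1\in\{2,\ldots,n-1\}$. You peel off the \emph{top} summand instead, taking $n_1=n-1\geq 2$, which keeps every invocation of Theorem \ref{spacedmultip} within its stated range, at the cost of (i) treating $n=1$ and $n=2$ as separate base cases (correctly settled via Theorem \ref{thm-grass}, Theorem \ref{hallprop}(v) and the paper's rank-two proposition) and (ii) proving the inversion-count identity $|\delta|=|\delta'|+r_2\bigl((n-1)-r_1\bigr)$, which you do correctly. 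So your argument is, strictly speaking, the tighter instantiation of the shared strategy: the paper's version tacitly extends Theorem \ref{spacedmultip} to $n_1=1$ (true, but not what is stated), while yours never needs that extension.
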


\begin{proof} First, let 
\[ \E_j' := \bigoplus_{i=j}^n \cO(d_i-\delta(i)|x|) \quad \text{and}  \quad \E_j :=\bigoplus_{i=j}^n \cO(d_i).\]
Lemma \ref{existtrivial} and Theorem \ref{break} yields 
$ m_{x,r_j}(\E_j',\E_j) \neq 0 $
for all $j \in \{1, \ldots, n\}$, where $r_j=r-\sum_{i=1}^j \delta(i)$. 
Furthermore,
\[m_{x, \delta(i)}(\cO(d_i-\delta(i)d),\cO(d_i))=1 \ \ \text{for all $i \in \{1, \ldots, n\}$}.\]
Let $c(j) := q^{r_j(1-\delta(j))d}.$ By Theorem \ref{spacedmultip},
\begin{align*}
    m_{x,r}(\E',\E) & =  m_{x, \delta(1)}(\cO(d_1-\delta(1)d),\cO(d_1)) \ m_{x,r-\delta(1)}(\E_1',\E_1) \ c(1)\\
   & = m_{x,\delta(2)}(\cO(d_2-\delta(2)d),\cO(d_2)) \ m_{x,r-\delta(1)-\delta(2)}(\E_2',\E_2) \ c(1) \ c(2) \\
   & \; \; \vdots \\ 
   & =  m_{x,\delta(j)}(\cO(d_j-\delta(j)d), \cO(d_j)) \ m_{x, r-\sum_{i=1}^j \delta(i)}(\E_j',\E_j) \ \prod_{i=1}^j c(i).
\end{align*}
Therefore,
$$m_{x,r}(\E',\E)=\prod_{i=1}^n c(i)=q^{(\sum_{i=1}^n(r-\sum_{\ell=1}^i\delta(\ell))(1-\delta(i)))d}=q^{|\delta|d},$$
which establishes the formula.
\end{proof}

 \begin{thm}
    Let $x \in \P^1$ be a closed point of degree one, and let $\E \in \Bun_n (\P^1)$ be such that $\E = \bigoplus_{i=1}^{n} \cO(d_i)$. Given a Hecke modification $[\E' \xrightarrow[]{x} \E]$, there exists a Kronecker delta function $\delta_{j} \in \Delta_1^n$ such that 
    \[ \E' \cong \ \bigoplus_{i=1}^{n} \cO(d_i-\delta_{j}(i)).\] 
     Furthermore, let $A :=\{k \in \{1, 2, \ldots, n\}\,\big|\,  d_k=d_j\}$. Then
     \[m_{x,1}(\E, \E')=q^{\min(A)-1}\;\dfrac{q^{\#A}-1}{q-1}.\]
 \end{thm}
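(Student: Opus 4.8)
The plan is to reduce the weight-one Hecke modification over a degree-one point to a Hall-number computation, exactly as set up in the preceding sections. Since $\deg(x)=1$, Theorem \ref{thm-heckemodforP1anyfield} (or rather its $\FF_q$ analogue via Theorem \ref{menorqued}) tells us that any Hecke modification $[\E' \xrightarrow{x} \E]$ of weight $1$ subtracts exactly one unit of degree from a single summand, so $\E' \cong \bigoplus_i \cO(d_i - \delta_j(i))$ for some $\delta_j \in \Delta_1^n$, where $\delta_j$ is the indicator of a single index $j$. This establishes the first assertion. Note that $\E'$ only determines the \emph{degree} $d_j$ that drops, not the index $j$ itself, which is precisely why the multiplicity will count all admissible choices within the block of summands sharing that degree.

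For the multiplicity formula, I would invoke the identification $m_{x,1}(\E',\E) = \phi^{\E}_{\cK_x\,\E'}$ and compute this coefficient from the Hall product $\cK_x * \E'$, using the torsion-free projection $\pi^{vec}$ of Corollary \ref{prodsemtor}. Writing $\E' \cong \bigoplus_{i=1}^{m} \cO(b_i)^{\oplus \ell_i}$ with $b_1 < \cdots < b_m$ as in Theorem \ref{hpro}, the coefficient of $\E$ in $\pi^{vec}(\cK_x * \E')$ is, by Corollary \ref{prodsemtor},
\[
m_{x,1}(\E',\E) = Q(\E') \sum_{\delta \in \Delta_1^n(\E',\E,x)} q^{|\delta|},
\]
where only those $\delta \in \Delta_1^n$ that raise the degree of a summand $\cO(b_k)$ (with $d=1$) to land on the target bundle $\E$ contribute. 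Since $\E$ is obtained from $\E'$ by raising exactly one degree back up, and $\E = \bigoplus_i \cO(d_i)$ has a summand structure compatible with the block $A = \{k : d_k = d_j\}$, the contributing $\delta$'s are exactly the single-index indicators supported within the relevant block; I would then evaluate $\sum q^{|\delta|}$ over those indices and multiply by the normalizing factor $Q(\E')$.

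The computation then becomes a bookkeeping exercise in $|\delta|$. For $\delta = \delta_i \in \Delta_1^n$ supported at index $i$ (with $i$ ranging over the block $A$), the definition gives $|\delta_i| = \sum_{k=1}^n (1-\delta_i(k))(1 - \sum_{\ell \le k}\delta_i(\ell)) = \#\{k < i\} = i-1$ after accounting for the jump at $i$ — I would compute this carefully, as $|\delta_i|$ measures how many zero-entries precede the position where the $1$ sits, weighted appropriately. Summing $q^{|\delta_i|}$ over $i \in A = \{\min(A), \ldots, \min(A)+\#A - 1\}$ (consecutive indices, since equal-degree summands are grouped) produces a geometric series contributing $q^{\min(A)-1}(q^{\#A}-1)/(q-1)$ after the factors of $Q(\E')$ are absorbed.

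The main obstacle I anticipate is matching the $Q(\E')$ normalization against the raw count so that the Hall-number $\phi^{\E}_{\cK_x\,\E'}$ collapses cleanly to the stated closed form; equivalently, one must verify that the $q$-binomial factors from permuting equal-degree summands combine with the $q^{|\delta|}$ weights to yield exactly the Grassmannian-type count $q^{\min(A)-1}(q^{\#A}-1)/(q-1)$. This is a local computation within the single block $A$, and I would handle it by reducing to the sub-bundle $\bigoplus_{k \in A} \cO(d_j)$ of rank $\#A$ and applying Theorem \ref{spacedmultip} to split off the summands of strictly larger and strictly smaller degree, which contribute trivial multiplicity $1$ but shift the exponent by $\min(A)-1$ through the $q^{r_2(n_1 - r_1)}$ factor.
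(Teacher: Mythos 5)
Your first assertion (reduce via Theorem \ref{menorqued}/Corollary \ref{greater0} to $\E'\cong\bigoplus_i\cO(d_i-\delta_j(i))$) is correct, and your closing paragraph --- split off the summands of degree $\neq d_j$ with Theorem \ref{spacedmultip}, which contribute multiplicity one and the factor $q^{\min(A)-1}$ through the exponent $r_2(n_1-r_1)$ --- is precisely the paper's proof. The genuine gap is in the computation you put at the center. Corollary \ref{prodsemtor} does \emph{not} say that $m_{x,1}(\E',\E)=Q(\E')\sum_{\delta}q^{|\delta|}$: it says
\[
\pi^{vec}(\cK_x*\E')=Q(\E')\sum_{\delta\in\Delta_1^n}q^{|\delta|}\hprod_{i=1}^n\cO(d_i'+\delta(i)),
\]
so extracting the coefficient of $\E$ still requires expanding each Hall product of line bundles, in which $\E$ occurs with coefficients that are neither $0$ nor $1$ and do not cancel $Q(\E')$ termwise. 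Worse, the contributing $\delta_i$ are \emph{not} those supported in $A$: $\delta_i$ realizes $[\E'\xrightarrow{x}\E]$ exactly when $d_i'=d_j-1$, i.e.\ for $i\in\{j\}\cup\{k\,:\,d_k=d_j-1\}$, which can be strictly larger than $A$. Concretely, take $\E=\cO\oplus\cO\oplus\cO(1)$ and $\E'=\cO^{\oplus 3}$, so $A=\{3\}$ and the theorem gives $m_{x,1}=q^2$. All three $\delta_i$ contribute: the coefficients of $\E$ in $\cO(1)*\cO*\cO$, $\cO*\cO(1)*\cO$ and $\cO*\cO*\cO(1)$ are $q^4(q+1)$, $q^2(q+1)$ and $q+1$, and only the full combination
\[
Q(\E')\bigl[q^0\cdot q^4(q+1)+q^1\cdot q^2(q+1)+q^2\cdot(q+1)\bigr]=q^2,\qquad Q(\E')=\tfrac{(q-1)^2}{(q^3-1)(q^2-1)},
\]
returns the multiplicity. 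Your geometric series $\sum_{i\in A}q^{i-1}$ reproduces the stated closed form, but it does not correspond to the actual terms of the Hall computation; as a mechanism it is a bookkeeping coincidence, not a proof.

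The way to close the gap is to actually carry out the reduction you defer to the end. Write $\E=\E_1\oplus\E_s\oplus\E_2$ with $\E_1$ (resp.\ $\E_2$) the summands of degree $<d_j$ (resp.\ $>d_j$) and $\E_s=\cO(d_j)^{\oplus \#A}$; since $|x|=1$, the gap hypothesis of Theorem \ref{spacedmultip} holds at both cuts, and two applications give $m_{x,1}(\E',\E)=q^{\rk(\E_1)}\,m_{x,1}(\E_s',\E_s)=q^{\min(A)-1}\,m_{x,1}(\E_s',\E_s)$, where $\E_s'=\cO(d_j-1)\oplus\cO(d_j)^{\oplus(\#A-1)}$. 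Inside the block, note that \emph{only one} $\delta$, namely $(1,0,\ldots,0)$, realizes $[\E_s'\xrightarrow{x}\E_s]$: for any other $\delta_i$ the remaining factors all have degree $\geq d_j$, every term of their Hall product has all summands of degree $\geq d_j$, and Theorem \ref{hallprop}(i) then splits off $\cO(d_j-1)$ as a direct summand of every term, so $\E_s$ never occurs. Hence the factor $(q^{\#A}-1)/(q-1)$ does not arise as a sum over $\delta$'s at all; it is the normalization ratio
\[
Q(\E_s')\cdot\prod_{i=0}^{\#A-1}\frac{q^{\#A-i}-1}{q-1}=\frac{q^{\#A}-1}{q-1},
\]
coming from Theorem \ref{hallprop}(iii) --- equivalently, after twisting by $\cO(-d_j)$, it is $\#\Gr(1,\#A)(\FF_q)$ as in Lemma \ref{multgrass}. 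With that block computation supplied, your outline becomes the paper's argument.
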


\begin{proof}
 According to Corollary $\ref{greater0}$, if $[\E' \xrightarrow[]{x} \E]$, then 
 \[ \E' \cong \bigoplus_{i=1}^{n} \cO(d_i-\delta_{j}(i)),\] 
 for some $\delta_{j} \in \Delta_1^n$. Observe that $\delta_j$ denotes the Kronecker delta given by $\delta_j(i)=1$ if $i=j$ and $\delta_j(i) = 0$ if $i \neq j.$
 
Write,
\[ \E = \bigoplus_{i=1}^n \cO (d_i) = \bigoplus_{i=1}^{l_1} \cO (b_1) \oplus \bigoplus_{i=1}^{l_2} \cO (b_2) \oplus \cdots \oplus \bigoplus_{i=1}^{l_m} \cO (b_m),\] with $b_i < b_k$ for $i<k$. Let $s \in \{1, \ldots, m\}$ such that $b_s=d_j$. Hence, $l_s=\#A$. We can decompose $\E = \E_1 \oplus \E_s \oplus \E_2$, where
 \[\E_1 = \bigoplus_{k=1}^{s-1} \bigoplus_{i=1}^{l_{k}} \cO (b_{k}), \quad  
 \E_s= \bigoplus_{i=1}^{l_s} \cO (d_j), \quad 
 \E_2 = \bigoplus_{k=s+1}^{m} \bigoplus_{i=1}^{l_{k}} \cO (b_{k}). \]
Let $\E_1' :=\E_1$, $\E_2' :=\E_2$ and $\E_s' : =\bigoplus_{i=1}^{s} \cO (d_j-\delta_j(\rk(\E_1)+i))$. Thus, $ \E'=\E_1' \oplus \E_s' \oplus \E_2'$. 
Theorem \ref{spacedmultip} yields
 \begin{align*}
     m_{x,1}(\E',\E)=& m_{x,0}(\E_1',\E_1) \ m_{x,1}(\E_s'\oplus \E_2',\E_s \oplus \E_2) \ q^{\rk(\E_1)}\\
     =& m_{x,0}(\E_1',\E_1) \ m_{x,1}(\E_s',\E_s) \ m_{x,0}(\E_2',\E_2) \ q^{\rk(\E_1)} \\
     =& q^{\rk(\E_1)} \ m_{x,1}(\E_s',\E_s),
 \end{align*} 
where the last equality follows from the fact that 
\[ m_{x,0}(\E_1',\E_1) = \ m_{x,0}(\E_2',\E_2)= 1. \]

Next, we observe that $\rk(\E_1) = \text{max}\{i \in \{1, \ldots, n\}\,\big|\, d_i<d_j\}=\min(A)-1$. Moreover, 
\[ \E_s' \cong \cO(d_j-1)\oplus \cO(d_j) \oplus \cO(d_j) \oplus \cdots \oplus \cO(d_j), \] thus 
\[ Q(\E_s')= \prod_{i=0}^{l_s-2}\frac{q-1}{q^{l_s-1-i}-1},\]
where $Q(\E_s')$ is as in Theorem \ref{hpro}
Thus, in order to compute the multiplicity $m_{x,1}(\E_s',\E_s)$, following Corollary \ref{prodsemtor}, we only need to consider $\delta=(1,0, \ldots, 0) \in \Delta_1^{l_s}$. Therefore, 
\[m_{x,1}(\E_s',\E_s)=Q(\E_s') \hprod_{i=1}^{l_s}\cO(d_j)q^{|\delta|} = \prod_{i=0}^{l_s-2}\frac{q-1}{q^{l_s-1-i}-1} \prod_{i=0}^{l_s-1}\frac{q^{l_s-i}-1}{q-1}=\frac{q^{l_s}-1}{q-1}.\]
Since $l_s=\#A$, we obtain the desired result.\end{proof}


\begin{lemma}\label{multgrass} 
   Let  $x \in \P^1$ be a closed point of degree $d$, and let $\E, \E'\in \Bun_n(\P^1)$ be such that
$\E := \bigoplus_{i=1}^n\cO$ and  $ \E' := \bigoplus_{i=1}^r\cO(-x) \oplus \bigoplus_{i=1}^{n-r} \cO.$
   Then
    \[m_{x,r}(\E', \E)= \#\Gr(r,n)(\FF_q).\]
\end{lemma}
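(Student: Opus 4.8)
The plan is to count the Hecke modifications geometrically via Theorem \ref{thm-grass} rather than through a Hall-algebra product. By that theorem the weight-$r$ Hecke modifications of $\E = \bigoplus_{i=1}^n \cO$ at $x$ are in canonical bijection with the $(n-r)$-dimensional $\kappa(x)$-subspaces $W$ of $\E_x \otimes \kappa(x) \cong \kappa(x)^{\oplus n}$; explicitly $W$ corresponds to the subsheaf $\E'_W \subseteq \E$ whose sections over $U$ are those $s \in \E(U)$ with $s(x) \in W$ whenever $x \in U$. Under this bijection $m_{x,r}(\E', \E)$ is exactly the number of such $W$ for which $\E'_W \cong \E' = \bigoplus_{i=1}^r \cO(-x) \oplus \bigoplus_{i=1}^{n-r} \cO$, so the whole problem reduces to recognizing, among all the $W$, those whose associated subsheaf has the prescribed isomorphism type.

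First I would characterize that isomorphism type cohomologically. Any $\E'_W$ is a weight-$r$ modification of $\cO^{\oplus n}$, so by Birkhoff--Grothendieck (Theorem \ref{birgro}) it is $\bigoplus_{i=1}^n \cO(d_i')$, and Theorem \ref{menorqued} forces $-d \leq d_i' \leq 0$ with $\sum_i d_i' = -rd$, where $d = |x|$. Since $h^0(\P^1, \cO(j)) = 0$ for $j < 0$ and $h^0(\P^1,\cO) = 1$, one gets $h^0(\E'_W) = \#\{i : d_i' = 0\}$. If this number equals $n-r$, then the remaining $r$ summands have integer degrees in $[-d,-1]$ summing to $-rd$, which forces each to equal $-d$; hence $\E'_W \cong \cO(-x)^{\oplus r} \oplus \cO^{\oplus(n-r)} = \E'$, and the converse is immediate. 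Thus $\E'_W \cong \E'$ if and only if $h^0(\E'_W) = n-r$.

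Next I would compute $h^0(\E'_W)$ directly in terms of $W$. The global sections of $\E = \cO^{\oplus n}$ are the constant vectors $\FF_q^{\oplus n}$, and a constant $c \in \FF_q^{\oplus n}$ lies in $H^0(\E'_W)$ precisely when its image in $\kappa(x)^{\oplus n}$ lies in $W$; hence $H^0(\E'_W) = W \cap \FF_q^{\oplus n}$ (intersection inside $\kappa(x)^{\oplus n}$) and $h^0(\E'_W) = \dim_{\FF_q}(W \cap \FF_q^{\oplus n})$. Because an $\FF_q$-linearly independent family in $\FF_q^{\oplus n}$ stays $\kappa(x)$-linearly independent, the $\kappa(x)$-span of $W \cap \FF_q^{\oplus n}$ is an $\FF_q$-rational subspace of $W$ of $\kappa(x)$-dimension $\dim_{\FF_q}(W \cap \FF_q^{\oplus n})$; this is at most $\dim_{\kappa(x)} W = n-r$, with equality exactly when $W$ itself is defined over $\FF_q$. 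Combining with the previous paragraph, $\E'_W \cong \E'$ if and only if $W$ is $\FF_q$-rational.

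Finally I would count: the $\FF_q$-rational $(n-r)$-dimensional subspaces of $\kappa(x)^{\oplus n}$ are exactly the $\FF_q$-points of $\Gr(n-r,n)$, so their number is $\#\Gr(n-r,n)(\FF_q) = \#\Gr(r,n)(\FF_q)$ by the symmetry $\binom{n}{n-r}_q = \binom{n}{r}_q$ of the Gaussian binomial coefficient, giving the claim. I expect the only delicate point to be the rationality step in the third paragraph --- the clean equivalence ``$h^0$ maximal $\Leftrightarrow$ $W$ rational'' --- together with checking that the bijection of Theorem \ref{thm-grass} indeed matches the multiplicity $m_{x,r}(\E',\E)$ counted as isomorphism classes of short exact sequences; both the existence of the prescribed modification for each rational $W$ and the final count are otherwise routine.
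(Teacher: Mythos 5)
Your proof is correct, but it follows a genuinely different route from the paper's. The paper stays entirely inside the Hall-algebra formalism: it computes $Q(\E)$ and $Q(\E')$ from Theorem \ref{hpro}, observes that the only $\delta \in \Delta_r^n$ realizing $[\E' \xrightarrow[r]{x} \E]$ is $(1,\ldots,1,0,\ldots,0)$ with $|\delta|=0$, and reads off $m_{x,r}(\E',\E)= q^{d|\delta|}\,Q(\E')Q(\E)^{-1}$, which simplifies to the Gaussian binomial $\#\Gr(n-r,n)(\FF_q)=\#\Gr(r,n)(\FF_q)$. You instead work geometrically: via Theorem \ref{thm-grass} the modifications are parametrized by $(n-r)$-dimensional $\kappa(x)$-subspaces $W \subseteq \kappa(x)^{\oplus n}$, and you isolate those with $\E'_W \cong \E'$ by the cohomological criterion $h^0(\E'_W)=n-r$ (using Birkhoff--Grothendieck, degree additivity, and the bound $-d \leq d_i' \leq 0$ from Theorem \ref{menorqued}/Corollary \ref{greater0}), then identify $H^0(\E'_W)=W\cap \FF_q^{\oplus n}$ and show that $h^0$ is maximal exactly when $W$ is $\FF_q$-rational; all of these steps check out, including the equivalence classes-of-sequences versus subsheaves point you flag, since equivalence of sequences over the fixed $\E$ amounts to equality of image subsheaves. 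What each approach buys: the paper's computation is the one that feeds directly into the later structural results (it reuses the $Q(\E)$, $\Delta_r^n$, $|\delta|$ bookkeeping that Theorems \ref{spacedmultip} and \ref{deg1anyr} are built on), whereas your argument is more elementary, independent of the Hall algebra altogether, and conceptually sharper: it explains the remark following the lemma -- among all $\#\Gr(n-r,n)(\kappa(x))$ modifications, the ones of isomorphism type $\cO(-x)^{\oplus r}\oplus \cO^{\oplus(n-r)}$ are precisely those corresponding to the $\FF_q$-rational points of the Grassmannian, which is why the multiplicity is a point count over $\FF_q$ rather than over $\kappa(x)$.
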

\begin{proof}
    Theorem \ref{hpro} yields
    \[Q(\E)=\prod_{i=0}^{n-1} \dfrac{q-1}{q^{n-i}-1} \ \ \text{and} \ \ Q(\E')=\prod_{i=0}^{r-1} \dfrac{q-1}{q^{r-i}-1}\prod_{i=0}^{n-r-1} \dfrac{q-1}{q^{n-r-i}-1}.\]
    The only function $\delta \in \Delta_r^n$ that realizes the Hecke modification $[\E'\xrightarrow[r]{x} \E]$ is
    \[\delta=(\underbrace{1, \ldots, 1}_{r \  \text{times}}, \underbrace{0, \ldots,0}_{n-r \ \text{times}}), \]
and  $|\delta|=0.$    
    Then
    \begin{align*}
        m_{x,r}(\E',\E) & = q^{d |\delta|} Q(\E')Q(\E)^{-1}=\prod_{i=0}^{n-r-1}\dfrac{q^{n-i}-1}{q^{r-i}-1}
         =\#\Gr(n-r,n)(\FF_q ).
    \end{align*}
The lemma follows from the fact that $\#\Gr(n-r,n)(\FF_q ) =\#\Gr(r,n)(\FF_q)$.
\end{proof}


\begin{rem}
Note that while the sum of all the multiplicities of Hecke modifications of \( \mathcal{E} \) at the closed point \( x \) with weight \( r \) is \( \#\Gr(r,n)(\kappa(x)) \), the previous lemma computed the multiplicity of a specific Hecke modification, which is equal to \( \#\Gr(r,n)(\mathbb{F}_q) \). These two numbers are equal if and only if \( |x| = 1 \).
\end{rem}

 The following theorem gives an explicit classification - including multiplicities - of all Hecke modifications of rank \( n \) vector bundles over \( \mathbb{P}^1 \) at a closed point of degree one and for any weight \( r \).


\begin{thm}\label{deg1anyr} 
 Let  $x \in \P^1$ be a closed point of degree one, and let $\E \in \Bun_n (\P^1)$ be such that
$ \E = \bigoplus_{i=1}^{n} \cO(d_i)=\bigoplus_{i=1}^m \bigoplus_{j=1}^{\ell_i} \cO(b_i),$ where $i<j$ implies $b_i < b_j$. 
Then $\E'\in \Bun_n\P^1$ is a Hecke modification of $\E$ at $x$ of weight $r$ if and only if there exists $\delta \in \Delta_r^n$ such that 
\[ \E' \cong \bigoplus_{i=1}^n \cO(d_i-\delta(i)).\] Furthermore,
\[m_{x,r}(\E', \E)=q^{\alpha}\prod_{j=1}^m \#\Gr(\theta_j,\ell_j),\]
where $\theta_j=\#\{ i \in \{1, 2, \ldots, n\}\,\big|\, d_i=b_j \text{ and } \delta(i)=1\}$ and $ \alpha=\sum_{j=1}^m(\ell_j-\theta_j)(r-\sum_{i=1}^j \theta_i)$.
\end{thm}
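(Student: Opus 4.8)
The first assertion is the characterization of the admissible $\E'$. Since $x$ has degree one we have $|x|=1$, so Corollary~\ref{greater0} (resting on Theorem~\ref{menorqued}) forces each $\epsilon_i = d_i - d_i' \in \{0,1\}$; additivity of degree in the defining short exact sequence then forces exactly $r$ of the $\epsilon_i$ to equal $1$, so that $\E' \cong \bigoplus_i \cO(d_i - \delta(i))$ for some $\delta \in \Delta_r^n$. Conversely, Lemma~\ref{existtrivial} produces, for every $\delta \in \Delta_r^n$, a short exact sequence exhibiting $\bigoplus_i\cO(d_i-\delta(i))$ as a weight-$r$ Hecke modification of $\E$. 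This settles the ``if and only if''.

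For the multiplicity, group $\E$ by slope: $\E = \bigoplus_{j=1}^m \E_{(j)}$ with $\E_{(j)} = \cO(b_j)^{\oplus \ell_j}$ and $b_1 < \cdots < b_m$. The matching summand of $\E'$ is $\E_{(j)}' = \cO(b_j-1)^{\oplus \theta_j} \oplus \cO(b_j)^{\oplus (\ell_j-\theta_j)}$, where $\theta_j$ is precisely the number of slope-$b_j$ summands whose degree drops; note that $\theta_j$ depends only on the isomorphism classes of $\E$ and $\E'$, so the formula is well defined even though $\delta$ need not be unique. The within-block multiplicity is $m_{x,\theta_j}(\E_{(j)}',\E_{(j)})$; tensoring by $\cO(-b_j)$, an autoequivalence of $\Coh(\P^1)$ fixing $\cK_x$ and hence preserving multiplicities, reduces this to the situation of Lemma~\ref{multgrass} (with ambient rank $\ell_j$, weight $\theta_j$, and $\cO(-x)\cong\cO(-1)$), giving $m_{x,\theta_j}(\E_{(j)}',\E_{(j)}) = \#\Gr(\theta_j,\ell_j)(\FF_q)$.

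The core step is to reassemble the blocks. Since consecutive slopes satisfy $b_{j+1}-b_j \geq 1 = |x|$, Theorem~\ref{spacedmultip} applies at every block boundary. I would peel the blocks off from the bottom by induction on $m$: at the $j$-th stage take $\E_1 = \E_{(j)}$ (rank $\ell_j$, weight $\theta_j$) and $\E_2$ equal to the remaining higher-slope blocks, whose weight is $r - \sum_{i\leq j}\theta_i$; Theorem~\ref{spacedmultip} then contributes the factor $q^{(r-\sum_{i\leq j}\theta_i)(\ell_j-\theta_j)|x|} = q^{(r-\sum_{i\leq j}\theta_i)(\ell_j-\theta_j)}$. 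Multiplying these factors across $j=1,\ldots,m$ collects the Grassmannian contributions into $\prod_j \#\Gr(\theta_j,\ell_j)$ and the exponents into $\alpha = \sum_j (\ell_j-\theta_j)\bigl(r - \sum_{i\leq j}\theta_i\bigr)$, yielding $m_{x,r}(\E',\E) = q^\alpha \prod_j \#\Gr(\theta_j,\ell_j)$.

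I expect the main friction to be twofold. First, the bookkeeping that the correction exponents accumulated by the successive splittings telescope to exactly the stated $\alpha$; it is convenient to verify this by an explicit induction, tracking that the weight of the ``upper'' part at stage $j$ is $r - \sum_{i\leq j}\theta_i$. Second, the boundary case where a bottom block has rank one ($\ell_j = 1$), so that the split index $n_1 = 1$ lies just outside the range $\{2,\ldots,n-1\}$ in the statement of Theorem~\ref{spacedmultip}. The argument of that theorem applies verbatim when the lower piece is a single line bundle, so this case causes no genuine difficulty; I would record it explicitly, noting that its Grassmannian factor $\#\Gr(\theta_j,1) = 1$ and its correction $q^{(r-\sum_{i\leq j}\theta_i)(1-\theta_j)}$ are exactly what the general formula predicts.
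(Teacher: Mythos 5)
Your proof is correct and follows essentially the same route as the paper's: both establish the classification via Corollary~\ref{greater0} (with Lemma~\ref{existtrivial} for the converse), compute the within-block multiplicities by Lemma~\ref{multgrass}, and peel off slope blocks from the bottom via Theorem~\ref{spacedmultip}, with identical exponent bookkeeping yielding $c(j)=q^{(r-\sum_{i\leq j}\theta_i)(\ell_j-\theta_j)}$ and the stated $\alpha$. Your two points of extra care --- the twist by $\cO(-b_j)$ needed to put each block into the situation of Lemma~\ref{multgrass}, and the rank-one block falling outside the stated range $n_1\in\{2,\ldots,n-1\}$ of Theorem~\ref{spacedmultip} --- are steps the paper passes over silently, so they strengthen rather than alter the argument.
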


\begin{proof} By Corollary $(\ref{greater0})$, $\E'= \bigoplus_{i=1}^n\cO(d_i-\delta(i))$ for some $\delta \in \Delta_r^n$. 
    
For each $j \in \{1, \ldots, m-1\}$, we define the vector bundles
 \begin{align*}
\F_j     &= \bigoplus_{i=1}^{\ell_j} \cO(b_j)  & \text{ and } & \quad 
&\E_j     &= \bigoplus_{k=j+1}^m \bigoplus_{i=1}^{\ell_k} \cO(b_k), \\
\F_j'    &= \bigoplus_{i=\ell_1 + \cdots + \ell_{j-1} + 1}^{\ell_1 + \cdots + \ell_j} \cO(d_i')  & \text{ and } & \quad 
&\E_j'    &= \bigoplus_{i=\ell_1 + \cdots + \ell_j + 1}^{n} \cO(d_i'), \\
\end{align*}
and the quantities
\[ r_j     = r - \sum_{i=1}^{j} \theta_i \quad \text{ and } \quad 
c(j)     = q^{r_j(\ell_j - \theta_j)}. \]
    
According to Lemma \ref{multgrass}, $m_{x, \theta_i}(\cF_i', \cF_i) = \# \Gr(\theta_i, \ell_i)$, for all $i \in \{1, \ldots, m\}$. Furthermore, Theorem \ref{spacedmultip} implies that
    \begin{align*}
        m_{x,r}(\E',\E) &=  m_{x, \theta_1}(\F_1',\F_1) \ m_{x,r_1}(\E_1',\E_1) \ c(1)\\
        &= \#\Gr(\theta_1,\ell_1) \ m_{x, \theta_2}(\F_2',\F_2) \ m_{x,r_2}(\E_2',\E_2) \ c(1) \ c(2) \\
        & \; \; \vdots \\
        &= \prod_{i=1}^{j-1}( c(i) \ \#\Gr(\theta_i, \ell_i)) \ m_{x, \theta_j}(\F_j',\F_j) \ m_{x,r_j}(\E_j',\E_j) \ c(j).\\
    \end{align*}
Hence,    
    \[m_{x,r}(\E',\E)= \prod_{i=1}^m \#\Gr(\theta_i, \ell_i) \; c(i)\]
and, finally, 
    \[\prod_{j=0}^m c(j)=q^{\sum_{j=1}^m(\ell_j-\theta_j)(r-\sum_{i=1}^j \theta_i)} ,\]
which completes the proof.
\end{proof} 


\begin{thm} \label{thm-C} Let  $x \in \P^1$ be a closed point of degree $d$, and let $\E', \E \in \Bun_n\P^1$ be such that $\E := \bigoplus_{i=1}^n \cO(d_i) $ and $ \E' := \bigoplus_{i=1}^n \cO(d_i')$. Let us suppose that $d_i'=d_i-\epsilon_i$, where $ \epsilon_i \in \{0, 1, \ldots, d\}$ for all $i \in \{1, \ldots, n\}$ and $\sum_{i=1}^n \epsilon_i= d$. Let $A: = \{i \in \{1, \ldots, n\}\,\big|\, \epsilon_i \neq 0\}$, $s : =\min(A)$ and $B:=\max(A)$. Then $\E'$ is a Hecke modification of $\E$ at $x$ with weight $1$ if and only if 
    \[d_{j+1}-\epsilon_{j+1} \leq d_j \ \ \text{for all $j \in \{s, s+1, \ldots , B-1\}$}.\]
\end{thm}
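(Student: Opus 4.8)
The plan is to translate the statement into a question about determinants of matrices of homogeneous forms, where the ``only if'' direction becomes a forced factorization and the ``if'' direction becomes an explicit determinantal realization. Let $F\in\FF_q[S,T]_d^h$ be the irreducible form corresponding to $x$, so $\cK_x\cong\cO/(F)$. By the Proposition of the previous section expressing $m_{x,1}(\E',\E)$ through monomorphisms of determinant $uF$, the bundle $\E'$ is a weight-$1$ Hecke modification of $\E$ at $x$ if and only if there is a matrix $\varphi=(\varphi_{ji})$ with entries $\varphi_{ji}\in\Hom(\cO(d_i'),\cO(d_j))=\FF_q[S,T]^h_{d_j-d_i'}$ and $\det(\varphi)=uF$ for some $u\in\FF_q^*$; in particular $\varphi_{ji}=0$ whenever $d_j<d_i'$, and the Smith Normal Form computation shows that any injective $\varphi$ with $\det(\varphi)=uF$ automatically has cokernel $\cK_x$. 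Throughout I use the standing convention after Theorem~\ref{birgro} (cf.\ Remark~\ref{padrao}) that both $(d_i)$ and $(d_i')$ are nondecreasing, which is what controls the zero pattern of $\varphi$.

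I would prove the ``only if'' direction by contraposition. Suppose the interlacing fails: $d_{j_0+1}'>d_{j_0}$ for some $j_0\in\{s,\dots,B-1\}$. Since $d_{j_0+1}'\leq d_{j_0+1}$, this forces a strict gap $d_{j_0}<d_{j_0+1}$, so $\{j:d_j\leq d_{j_0}\}=\{1,\dots,j_0\}$; and because $(d_i')$ is nondecreasing, $d_i'\geq d_{j_0+1}'>d_{j_0}$ for every $i\geq j_0+1$. Hence $\varphi_{ji}=0$ for all $j\leq j_0<i$, i.e.\ every admissible $\varphi$ is block lower-triangular for the splitting $\{1,\dots,j_0\}\sqcup\{j_0+1,\dots,n\}$, and $\det(\varphi)=\det(\varphi')\det(\varphi'')$ factors through the two diagonal blocks. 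Their degrees are $\sum_{i\leq j_0}\epsilon_i$ and $\sum_{i>j_0}\epsilon_i$, both strictly positive: $s\leq j_0$ places $\epsilon_s>0$ in the first block and $B\geq j_0+1$ places $\epsilon_B>0$ in the second. Thus $\det(\varphi)$ is a product of two forms of positive degree and can never equal the irreducible form $uF$; therefore $\E'$ is not a Hecke modification.

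For the converse, assume the interlacing inequalities hold. Outside $[s,B]$ we have $\epsilon_i=0$, so I set $\varphi_{ii}=1$ there and reduce to the active block $\{s,\dots,B\}$; note that the hypothesis $\epsilon_i\leq d$ together with the interlacing already forbids any gap $d_{i+1}-d_i>d$ inside this block, so Corollary~\ref{break} cannot split it further. The inequalities $d_{i+1}'\leq d_i$ say precisely that each superdiagonal space $\Hom(\cO(d_{i+1}'),\cO(d_i))=\FF_q[S,T]^h_{d_i-d_{i+1}'}$ is nonzero, while the entries on and below the diagonal are admissible automatically ($d_j\geq d_i\geq d_i'$ for $j\geq i$). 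I would then build a lower-Hessenberg matrix with nonzero forms on the superdiagonal and free forms on and below it, and show that its determinant sweeps out all of $\FF_q[S,T]^h_d$, in particular $uF$. The model case is the companion matrix $S\cdot\mathrm{Id}_d-T\,C_f$, with $C_f$ the companion matrix of the dehomogenisation of $F$, which has linear entries and determinant $F$ and realizes the extreme case $\cO(m-1)^{\oplus d}\hookrightarrow\cO(m)^{\oplus d}$; the general interlaced case is a degree-adjusted version of this, the bounds $d_i-d_{i+1}'\geq0$ guaranteeing enough room. (Over $\overline{\FF}_q$ the point $x$ splits into $d$ degree-one points, and Theorem~\ref{thm-heckemodforP1anyfield} gives the modification as a chain of $d$ elementary ones, a useful consistency check and an alternative route via Galois descent.)

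The main obstacle is exactly this realizability step for $p:=B-s+1\geq3$: showing that, in the absence of any forced block-triangular splitting, the irreducible $F$ is attained as $\det(\varphi)$. Expanding a lower-Hessenberg $\varphi$ along its first row yields the recursion $\det(\varphi)=\varphi_{s,s}A_{s,s}-h_s A_{s,s+1}$, where $A_{s,s}$ is again the determinant of a smaller lower-Hessenberg block and $h_s$ is the fixed superdiagonal form; the crux is to solve, inductively, a congruence of the shape $h_s A_{s,s+1}\equiv -F\pmod{\varphi_{s,s}}$ for the free entries. A naive choice using only the diagonal and one corner produces generators that share factors and need not suffice, so the argument must exploit the full lower-triangular freedom; a dimension count (the space of admissible $\varphi$ has dimension $\sum_{j\geq i}(d_j-d_i'+1)$, vastly exceeding $d+1=\dim\FF_q[S,T]^h_d$) and the surjectivity of the determinant map onto the degree-$d$ forms — which I expect to fail exactly when a forced block split exists — is the heart of the converse and where the interlacing inequalities are used in full.
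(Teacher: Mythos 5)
Your ``only if'' direction is correct, and it is genuinely different from the paper's argument: the paper proves this implication inside the Hall algebra (showing that any $\sigma\in\Delta_1^n$ realizing $[\E' \xrightarrow[]{x} \E]$ must be supported at $s$, and then extracting the inequalities from the structure of the products $\cO(a)*\cO(b)$), whereas you observe that a failure of interlacing at $j_0$ forces every admissible matrix $\varphi=(\varphi_{ji})$, $\varphi_{ji}\in\Hom(\cO(d_i'),\cO(d_j))$, to be block lower-triangular for the splitting $\{1,\dots,j_0\}\sqcup\{j_0+1,\dots,n\}$, so that $\det(\varphi)$ factors into two forms of strictly positive degree and can never equal $uF$ with $F$ irreducible. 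That argument is complete, more elementary than the paper's, and makes transparent \emph{why} the inequalities are necessary: they are exactly the obstruction to a forced determinantal factorization.

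The ``if'' direction, however, has a genuine gap, and you flag it yourself: you never prove that when the interlacing inequalities hold, some admissible lower-Hessenberg matrix has determinant $uF$. This realizability claim is the entire content of that implication, and the two ingredients you offer do not close it. The companion-matrix model only covers the extreme case $\cO(m-1)^{\oplus d}\hookrightarrow \cO(m)^{\oplus d}$; in general the superdiagonal entries $h_i$ have positive degrees $d_i-d_{i+1}'$, and the naive one-corner expansion gives $\det(\varphi)=\pm\varphi_{B,s}\prod_i h_i$, a product of positive-degree forms that cannot be irreducible unless all $h_i$ are constants. The dimension count cannot substitute for a construction either: the determinant map is not linear, and the space of admissible matrices is also large in the obstructed (block-split) case, where the image nevertheless contains no irreducible form --- as you note. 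So ``large source dimension'' proves nothing about surjectivity onto $\FF_q[S,T]_d^h$, and the inductive congruence $h_s A_{s,s+1}\equiv -F \pmod{\varphi_{s,s}}$ is stated but not solved. By contrast, the paper proves existence constructively in the Hall algebra: taking $\tilde\sigma\in\Delta_1^n$ supported at $s$, it repeatedly applies the relation for $\cO(a)*\cO(b)$ with $a\geq b$ (Theorem \ref{hallprop}(iv)), using at the $j$-th step precisely the inequality $d_{j+1}-\epsilon_{j+1}\leq d_j$ to recover the summand $\cO(d_j)$, until the full bundle $\E$ appears with nonzero coefficient in $\hprod_{i=1}^n\cO(d_i'+\tilde\sigma(i)d)$; by Corollary \ref{prodsemtor} this gives $m_{x,1}(\E',\E)\neq 0$. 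To salvage your approach you would need an analogous explicit step-by-step matrix construction (or to import the paper's Hall computation for this half), since as written the existence direction is a conjecture, not a proof.
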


\begin{proof}
First, we note that, if $d_{s+i+1}-\epsilon_{s+i+1} \leq d_{s+i}$, then 
\[d_{s+i}+d -\sum_{j=0}^{i} \epsilon_{s+j} \geq d_{s+i+1}-\epsilon_{s+i+1} \] 
for all $i \in \{0, 1, \ldots, B-s-1\}.$

Let $\E$ and $\E'$ as in the statement. By Corollary $\ref{prodsemtor}$, in order to $\E'$ be a Hecke modification of $\E$ at $x$ with weight one, we need to check the existence of $\Tilde{\sigma} \in \Delta_1^n$ such that $\E$ appears in the Hall product 
\[ \hprod_{i=1}^n\cO(d_i'+\Tilde{\sigma} d)\] 
with non-zero coefficient, i.e., that $\Tilde{\sigma} $ realizes $[\E' \xrightarrow[]{x} \E] $.
Let
\[\Tilde{\sigma}(i) := \left\{ \begin{array}{ll}
1, \ \ \ \  \text{if} \  i=s \\ 0,  \ \ \  \ \text{otherwise.}   \end{array}\right.\]
Since the degrees \( d_i' \) are in increasing order, then there is an integer $a>0$ such that
\begin{equation}\label{prodpeso1}
    \hprod_{i=1}^n\cO(d_i'+\Tilde{\sigma} d)=a\bigoplus_{i=1}^{s-1}\cO(d_i')*\cO(d_s-\epsilon_s +d) * \hprod_{i=s+1}^{n}(d_i').
\end{equation}
Next, since $d_s+d -\epsilon_s \geq d_{s+1}'=d_{s+1}-\epsilon_{s+1}$, there exists an integer $b_{\ell}$ such that
\[\cO(d_s+d-\epsilon_s)*\cO(d_{s+1}-\epsilon_{s+1})=\sum_{\ell=0}^{r}b_{\ell} \; \cO(d_{s+1}-\epsilon_{s+1}+\ell) \oplus \cO(d_s+d-\epsilon_s -\ell),\] 
where $r=\left\lfloor\tfrac{ (d_s+d-\epsilon_s) +(d_{s+1}-\epsilon_{s+1})}{2}\right\rfloor $. 

Note that $d_{s+1}-\epsilon_{s+1} \leq d_s$. This implies that we can recover $d_s$ in the above summand. Thus,  there exists a unique $\ell_s \in \{0, 1, \ldots, r\}$ such that either
\[ d_s=d_s+d -\epsilon_s-\ell_s \quad \text{or} \quad  d_s=d_{s+1}-\epsilon_{s+1}+\ell_s.\]
In both cases, in the product $\cO(d_s+d-\epsilon_s)*\cO(d_{s+1}-\epsilon_{s+1})$, there exists a term 
\[b_{\ell_s} \; \cO(d_s)\oplus\cO(d_{s+1}+d-\epsilon_s-\epsilon_{s+1}),\]
for some nonzero $b_{\ell_s} \in \Z$. 
Similarly, we obtain 
\[d_{s+1}+d-\epsilon_s-\epsilon_{s+1} \geq d_{s+2}-\epsilon_{s+2} \quad \text{and} \quad d_{s+2}-\epsilon_{s+2} \leq d_{s+1}.\]
Thus, there exists an integer 
\[ \ell_{s+1}\in \left\{0, 1, \ldots , \left\lfloor\tfrac{ (d_{s+1}+d-\epsilon_s-\epsilon_{s+1}) +(d_{s+2}-\epsilon_{s+2})}{2} \right\rfloor \right\}\] 
such that in the Hall product  $\cO(d_{s+1}+d-\epsilon_s-\epsilon_{s+1})*\cO(d_{s+2}-\epsilon_{s+2})$ 
there is a term given by
\[ b_{\ell_{s+1}} \cO(d_{s+1})\oplus \cO(d_{s+2}+d-\epsilon_s-\epsilon_{s+1}-\epsilon_{s+2}),\]
for some nonzero $b_{\ell_{s+1}} \in \Z$.

Proceeding analogously, we obtain that
\[  d_{B-1}+d -\sum_{i=0}^{B-s-1}\epsilon_{s+i} \geq d_B-\epsilon_B \quad \text{and} \quad  d_B-\epsilon_B \leq d_{B-1}.\] 
Then, there exists an integer 
\[ \ell_{B-1} \in \left\{0, 1, \ldots, \left\lfloor \tfrac{(d_{B-1}+d -\sum_{i=0}^{B-s-1}\epsilon_{s+i}) - (d_B-\epsilon_B)}{2}  \right\rfloor \right\}\]
such that in the Hall product 
\[ \cO \big(d_{B-1}+d-\sum_{i=0}^{B-s-1} \epsilon_{s+i} \big) * \cO(d_{B}-\epsilon_{B})\] 
there is a term given by
\[ b_{\ell_{B-1}} \; \cO(d_{B-1})\oplus \cO(d_B+d-\sum_{i=0}^{B-s}\epsilon_{s+i}),\]
for some nonzero $b_{\ell_{B-1}} \in \Z$.
The definition of $s$ and $B$, yields $ \sum_{i=0}^{B-s}\epsilon_{s+i}=\sum_{i=1}^n \epsilon_{i}=d$. Then, 
\[  b_{\ell_{B-1}} \; \cO(d_{B-1})\oplus \cO(d_B+d-\sum_{i=0}^{B-s}\epsilon_{s+i}) 
=  b_{\ell_{B-1}} \; \cO(d_{B-1})\oplus \cO(d_B). \]
Hence, the Hall product \eqref{prodpeso1} has a term given by
\[ a\prod_{i=s}^{B-1}b_{\ell_i}\bigoplus_{i=1}^{s-1}\cO(d_i')*\bigoplus_{i=s}^B\cO(d_i) * \hprod_{i=B+1}^{n}\cO(d_i').\]

Lastly, since $d_i'=d_i$ for all $i \in \{1, \ldots, s-1\} \cup\{B+1, \ldots, n\}$ and the degrees $d_i$ are in increasing order, there exists a positive integer $c \in \Z$ such that 
\[ \hprod_{i=B+1}^{n}\cO(d_i') =c \bigoplus_{i=B+1}^{n}\cO(d_i).\]
Therefore, there exists an integer $b \in \Z$ such that
\begin{align*}
    a\prod_{i=s}^{B-1}b_{\ell_i}\bigoplus_{i=1}^{s-1}\cO(d_i')*\bigoplus_{i=s}^B\cO(d_i) *\hprod_{i=B+1}^{n}\cO(d_i') &=
ac\prod_{i=s}^{B-1}b_{\ell_i}\bigoplus_{i=1}^{s-1}\cO(d_i)*\bigoplus_{i=s}^B\cO(d_i) * \bigoplus_{i=B+1}^{n}\cO(d_i) \\
& = abc\prod_{i=s}^{B-1}b_{\ell_i}\bigoplus_{i=1}^{n}\cO(d_i) \\
& = abc \Big( \prod_{i=s}^{B-1}b_{\ell_i} \Big)\E
\end{align*}
is a term of the Hall product $ \hprod_{i=1}^n\cO(d_i'+\Tilde{\sigma} d)$. Hence, $\Tilde{\sigma}$ realizes the Hecke modification $[\E' \xrightarrow[]{x} \E] $.

Conversely, let $[\E' \xrightarrow[]{x} \E] $ be a Hecke modification. Since $\E :=  \bigoplus_{i=1}^n \cO(d_i)$ with $d_i \leq d_{i+1}$, by Theorem \ref{menorqued}, we might write $\E' := \bigoplus_{i=1}^n\cO(d_i')$ with $d_i' \leq d_{i+1}'$ such that  $d_i'=d_i-\epsilon_i$ for some $\epsilon_i \in \{1, \ldots, d\}$ for $i=1, \ldots, n$ and $\sum_{i=1}^n \epsilon_i = d$. 

We claim that $\Tilde{\sigma}$ realizes the Hecke modification $[\E' \xrightarrow[]{x} \E]$.
Indeed, let $\sigma_j \in \Delta_{1}^n$ given by 
\[\sigma_j(i)=\left\{\begin{array}{ll}
1, \ \ \ \  \text{if} \  i=j \\ 0,  \ \ \  \ \text{otherwise.}   \end{array}\right.\] Suppose that $d_s-\epsilon_s \neq d_i'=d_i$ for all $i \in \{1, \ldots, s-1\}$.
If there exists $\sigma_j$ for some $j>s$ that realizes the Hecke modification, then in the Hall product 
\[\hprod_{i=1}^n \cO(d_i-\epsilon_i+\sigma_j(i)d)\] 
we have that $d_i-\epsilon_i+\sigma_j(i)d=d_i-\epsilon_i$ are in increasing order for $i\in \{1, \ldots, j\}$. Then, there exists a positive integer $a' \in \Z$ such that 
\[\hprod_{i=1}^n \cO(d_i-\epsilon_i+\sigma_j(i)d) = a' \; \bigoplus_{i=1}^{j-1}\cO(d_i-\epsilon_i)*\hprod_{i=j}^{n}\cO(d_i-\epsilon_i+\sigma_j(i)d).\]
Then, every term in this Hall product must have a line subbundle with degree $d_s-\epsilon_s$, in particular $\E$ must have a line subbundle with degree $d_s-\epsilon_s$. That is, there exists $t \in \{s+1, \dots, n\}$ such that $d_t=d_s-\epsilon_s$. Since the degrees are in increasing order and $\epsilon_s>0$, then $d_t<d_s$ and $t<s$, a contradiction. Therefore,  $j \leq s$.

Suppose that $j<s$. Then, there exists a positive integer $a'' \in \Z$ such that
\[\hprod_{i=1}^n \cO(d_i-\epsilon_i+\sigma_j(i)d) = a'' \; \bigoplus_{i=1}^{j-1}\cO(d_i)*\hprod_{i=j}^{n}\cO(d_i-\epsilon_i+\sigma_j(i)d).\]

If $d_{j+1} > d_j$, then $\cO(d_j+d)*\cO(d_{j+1})$ does not have a term of degree $d_j$. The same happens in the Hall product of line bundles with greater degree, contradicting that $\sigma_j$ realizes the Hecke modification. 

If $d_{j+1}=d_j$, then 
\[ \cO(d_j+d)*\cO(d_{j+1})=q^{d+1} \cO(d_j)\oplus \cO(d_j+d),\]
which has a line subbundle of degree $d_j$. In the Hall product 
$\cO(d_{j+1}+d)*\cO(d_{j+2})$
if $d_{j+2} > d_{j+1}$ we have a contradiction as in the previous case, thus $d_{j+2}=d_{j+1}=d_j$. 

Using a similar argument with \( d_{j+3}, \ldots, d_n \), we eventually obtain \( d_{j+k+1} > d_{j+k} \) (for example, if \( j + k = s \)), leading to a contradiction.
However, since the Hecke modification $[\E' \xrightarrow[]{x} \E]$ there exists, there is an element $\sigma \in \Delta_1^n$ that realizes it. Therefore, the only possible case is $\sigma=\Tilde{\sigma}$.

Next, suppose that $d_s-\epsilon_s=d_{s-1}= d_{s-2} = \cdots = d_{s-k}$ and $d_s-\delta \neq d_i$ for all $i<s-k$.
If either $j>s$ or $j<s-k$, we have the same situation as the previous case. Thus, we are left to the case $j=s-\ell$ for some $\ell \in \{1, \ldots, k\}$. 

In the Hall product $\hprod_{i=1}^n \cO(d_i-\epsilon_i+\sigma_j(i)d)$, there is the following term
\[q^{(d+1)(s-j)} \prod_{i=1}^{s-j-1} \dfrac{q-1}{q^{s-j-i}-1} \hprod_{i=1}^n \cO(d_i-\epsilon_i+\sigma_s(i)d).\]
This is obtained by moving to the right the line bundle \( \mathcal{O}(d_s - \delta + d) \) until all the line bundles of degree \( d_s - \epsilon_s \) are to the left of \( \mathcal{O}(d_s - \delta + d) \) in the Hall product.
Note that this is the only term that allows us recovery $\E$, since any other term does not have $k-1$ line subbundles of degree $d_s-\epsilon_s$ on the left of $\cO(d_s)$.

Hence, given an element $\sigma \in \Delta_1^n$ that realizes $[\E' \xrightarrow[]{x} \E]$, there exists a positive integer $h \in \Z$ such that all terms in the Hall product $\hprod_{i=1}^n \cO(d_i-\epsilon_i+\sigma(i)d)$ that allows us recovery $\E$, are terms of the following Hall product 
\[h \hprod_{i=1}^n \cO(d_i-\epsilon_i+\Tilde{\sigma}_j(i)d). \]
Therefore, $\Tilde{\sigma}$ realizes $[\E' \xrightarrow[]{x} \E ]$, which proves our claim.

The fact that $\Tilde{\sigma}$ realizes the Hecke modification $[\E' \xrightarrow[]{x} \E ]$ implies that $\E$ is a term of the Hall product in equation $\eqref{prodpeso1}$.

We suppose by contradiction that $d_{s+1}-\epsilon_{s+1}>d_s$. Then the Hall product 
$$\cO(d_s-\epsilon_s+d)*\cO(d_{s+1}-\epsilon_{s+1})$$ 
has only terms of degree greater that $\min\{d_s-\epsilon_s+d,d_{s+1}-\epsilon_{s+1}\}>d_s$. As the degrees are in increasing order, all other Hall products would have line subbundles with degree greater than $d_s$, a contradiction. Thus,  $d_{s+1}-\epsilon_{s+1}\leq d_s$ and there exists only a positive integer $b_{\ell_s} \in \Z$ such that
 \[\cO(d_s-\epsilon_s +d)*\cO(d_{s+1}-\epsilon_{s+1})=b_{\ell_s}\cO(d_s)\oplus\cO(d_{s+1}+d-\epsilon_s-\epsilon_{s+1}).\]
 
In the Hall product, $ \cO(d_{s+1}+d-\epsilon_s-\epsilon_{s+1})*\cO(d_{s+2}-\epsilon_{s+2}),$ 
by the same reasoning as before, we obtain $d_{s+2}-\epsilon_{s+2} \leq d_{s+1}$.

Proceeding inductively, using that $d-\sum_{j=s}^{s+i}\epsilon_j>0$ for $i \in \{0, \ldots, B-s-1\}$, we conclude that 
$$d_{s+i+1}-\epsilon_{s+i+1} \leq d_{s+i+1},$$ 
which is the desired conclusion.
\end{proof}


 \section{Application: Hecke Eigenforms}

Let $X$ be a smooth projective curve defined over a finite field $\Fq$. Let $F$ be the function field  of $X$ and $\mathbb{A}$ be its adelic ring. While the definitions hold for every $X$, in the theorems we assume $X$ to be the projective line. Moreover, instead of $\Bun_n (X)$, we consider only projective vector bundles $\P\Bun_n (X)$. For $\E \in \Bun_n (X)$, we abuse the notation and continue to use $\E$ to denote its class on $\P\Bun_n (X)$. The goal of this section is to apply previous calculation to show that the space of unramified toroidal automorphic forms over $\P^1$ for $\PGL_n$ is trivial, for every $n \geq 2$. For $n=2$, this has been previously showed by Lorscheid in \cite[Thm. 10.9]{oliver-graphs}. We refer to \cite{dennis-03}  for the precise definitions and basic facts about unramified automorphic forms.  

In what follows, we apply a theorem due to Weil (cf.\ either \cite[Lemma 5.1.6]{lorscheid-thesis} or \cite[Lemma 3.1]{frenkel-04}) to consider an unramified automorphic form as a complex valuated function
\[ f : \P\Bun_n (X) \longrightarrow \C. \] 
Let $\mathcal{A}$ be the set of all unramified automorphic forms over $X$ for $\PGL_n$. In this setting, the Hecke operator associated to a torsion sheaf $\mathcal{T} \in \mathrm{Tor}(X)$ is the operator  
\[  \Phi_{\mathcal{T}} : \mathcal{A} \rightarrow \mathcal{A} \]
given by  
\[ \Phi_{\mathcal{T}}(f)(\E) := \sum_{\substack{\E' \subseteq \E \\ \E/\E' \cong \mathcal{T}}} f (\E')\]
where the sum runs over the coherent subsheaves $\E'$ of $\E \in \Bun_n (X)$ whose quotient is isomorphic to $\mathcal{T}$. We observe that $\E'$ is locally free sheaf since it is contained in $\E$ and, therefore, $f(\E')$ is well-defined. 
When $\mathcal{T} = \mathcal{K}_{x}^{\oplus r}$, we denote $\Phi_{\mathcal{T}}$ simply by $\Phi_{x,r}$. We also refer to \cite[Sec. 2]{kapranov} for a background material related to automorphic forms in this geometric setting.

\begin{df} Let $\E,\E' \in \Bun_n (X)$ and $\mathcal{T} \in \mathrm{Tor}(X)$. If $\E' \subseteq \E$ and $\E/\E' \cong \mathcal{T}$, we say that $\E'$ is a $\Phi_{\mathcal{T}}$-neighbor of $\E.$ Note that in this case, the class of $\E'$ in $\P\Bun_n (X)$ appears in the summand given by the action of the Hecke operator  $\Phi_{\mathcal{T}}$. 
\end{df}


\begin{df} Let $x \in X$ be a closed point and $\underline{\lambda} := (\lambda_1, \ldots, \lambda_{n-1}) \in \C^{n-1}$. The space of $\Phi_{x,r}$-eigenforms, for $r=1, \ldots, n-1$, with eigenvalues $\underline{\lambda}$ is 
\[ \mathcal{A}(x, \underline{\lambda}) := \big\{ f \in \mathcal{A}\;\big|\; 
\Phi_{x,i}(f) = \lambda_i f \; \text{ for $i=1, \ldots, n-1$} \big\}
\]
where $\mathcal{A}$ is the space of unramified automorphic forms. 
\end{df}


\noindent
\textbf{Notations.} By a \textit{partition of length} $m \in \Z_{>0}$, we mean a sequence $\rho := (d_1, \ldots, d_m)$ of positive integers in non-decreasing order i.e., $d_1 \leq d_2 \leq \cdots \leq d_m$. The $d_i$ are called the \textit{parts} of $\rho$. We denote by $\rho = (1^{\ell_1}, 2^{\ell_2}, \ldots, m^{\ell_m})$ the partition that has exactly $\ell_i$ parts equal to $i$.

By the classification of vector bundles over the projective line, every rank $n$ projective vector bundle can be written as either
\[   \E_0:=  \bigoplus_{i=1}^n \cO  \quad \text{or}  \quad
    \E_{\rho} := \bigoplus_{i=1}^{n-t-\ell}\cO   \oplus \bigoplus_{i=1}^{\ell} \cO(1) \oplus \bigoplus_{i=1}^t \cO(d_i)
\] 
where $\rho = (1^{\ell},d_1, \ldots, d_t)$ is a partition of length $\ell +t$, for $1\leq \ell+t \leq n-1$. 

Given an element $\sigma \in \Delta_s^n$, we keep using the same notation to define 
\[ \sigma:  \Z^n \longrightarrow    \Z^n\] 
given by
\[ 
 (d_1, \ldots, d_n) \longmapsto  (d_1+\sigma(1), \ldots, d_n+\sigma(n)).
\]


\begin{rem} \label{rem-langlands}
Our aim in this section is to apply the results from the previous sections to prove the triviality of the spaces of toroidal and cuspidal forms over \( \mathbb{P}^1 \). To achieve that, we first show that the space \( \mathcal{A}(x, \underline{\lambda}) \) is trivial. 

According to the geometric Langlands correspondence, proved by Drinfeld \cite{drinfeld} for \( n = 2 \) and by Lafforgue \cite{lafforgue-02} for \( n > 2 \), the space $\mathcal{A}$ admits a basis consisting of eigenvectors of the unramified (or spherical) Hecke algebra, labeled by equivalence classes of \( n \)-dimensional representations of the unramified quotient of the Weil group of \( F \). In the case of our interest, i.e., for \( X = \mathbb{P}^1 \), the unramified Weil group is isomorphic to the Weil group of the base field \( \mathbb{F}_q \), which consists of all integer powers of the Frobenius automorphism and is therefore isomorphic to \( \Z \). Roughly speaking, this is because the Weil group of \( F \) for a general curve \( X \) is an extension of the Weil group of \( \mathbb{F}_q \) by the étale fundamental group of \( \overline{X} = X \otimes_{\mathbb{F}_q} \overline{\mathbb{F}}_q \) - and for \( X = \mathbb{P}^1 \), this latter group is trivial since \( \mathbb{P}^1 \) is simply connected, i.e., it has no non-trivial étale covers. Hence, the eigenvectors are labeled by equivalence classes of \( n \)-dimensional representations of \( \Z \), which correspond to collections \( (\lambda_1, \ldots, \lambda_n) \) of nonzero complex numbers (up to permutations). Moreover, these numbers must give the eigenvalues of the Hecke operators \( \Phi_{x,r} \) at all closed points \( x \) of degree one. This follows because, in the unramified Weil group of \( F \), the Frobenius morphism at such \( x \) coincides with the Frobenius morphism of \( \mathbb{F}_q \), the generator of the Weil group of the base field. Under the Langlands correspondence, the eigenvalues of this Frobenius morphism match those of the Hecke operators \( \Phi_x \). Therefore, one concludes  that \( \mathcal{A}(x, \underline{\lambda}) \) is trivial.

While the triviality of \( \mathcal{A}(x, \underline{\lambda}) \) follows from the geometric Langlands correspondence, as explained above, we present a proof of this fact that is significantly more elementary. Thus, even though the following results may be known to experts, the proofs provided below are new and rely solely on the theory of Hecke modifications. Therefore, accessible to those non-experts on Langlands program. 
\end{rem}


 \begin{thm} \label{thm-dim1eigenforms} Let  $x \in \P^1$ be a closed point of degree one, and let $\E \in \PBun_n\P^1$. Let $ \underline{\lambda} =( \lambda_1 , \ldots , \lambda_{n-1}) \in \C^{n-1}$. If $f \in \cA(x, \underline{\lambda})$,  then $f(\E)$ is determined by $ \underline{\lambda} $ and $f (\E_0)$. In
particular,
\[\dim_{\C} \cA(x,\underline{\lambda}) = 1.\]
 \end{thm}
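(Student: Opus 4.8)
The plan is to turn the eigenform equations $\Phi_{x,i}(f)=\lambda_i f$, $i=1,\dots,n-1$, into a recursion that computes $f(\E)$ for every class $\E\in\PBun_n\P^1$ from the single value $f(\E_0)$ and the eigenvalues $\underline\lambda$. First I would normalize: by Birkhoff--Grothendieck (Theorem \ref{birgro}) each class is represented uniquely as $\bigoplus_{i=1}^n\cO(d_i)$ with $0=d_1\le\cdots\le d_n$, and I set $S(\E):=\sum_{i=1}^n d_i$. Since $x$ has degree one, Corollary \ref{greater0} and Theorem \ref{deg1anyr} give a completely explicit description of the $\Phi_{x,r}$-neighbors of $\E$: they are the $\E_\delta:=\bigoplus_i\cO(d_i-\delta(i))$ for $\delta\in\Delta_r^n$, each occurring with a multiplicity $m_{x,r}(\E_\delta,\E)$ that is a strictly positive integer (a power of $q$ times a product of Grassmannian cardinalities). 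Crucially, this positivity is independent of $\underline\lambda$, so no division by a possibly-zero eigenvalue will occur. Note also that weight $n$ acts trivially on $\PBun_n$ (it is tensoring by $\cO(-x)$), so only $r=1,\dots,n-1$ are relevant.

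Next I would set up the raising step. Given a nontrivial class $\G$ with degrees $0=e_1\le\cdots\le e_n$, let $t\ge 1$ be the multiplicity of the top degree $e_n$; then $t\le n-1$ (otherwise $\G=\E_0$), so $r:=n-t\in\{1,\dots,n-1\}$. Let $\E$ be the class obtained from $\G$ by lowering its top $t$ twists by one, so that $S(\E)=S(\G)-t<S(\G)$. By Lemma \ref{existtrivial} and Theorem \ref{deg1anyr}, $\G$ is precisely the neighbor $\E_\delta$ of $\E$ for the maximal $\delta\in\Delta_r^n$ supported on the $r$ lowest-degree summands, and $m_{x,r}(\G,\E)>0$. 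Writing the eigenform equation $\lambda_r f(\E)=\sum_{\delta\in\Delta_r^n}m_{x,r}(\E_\delta,\E)\,f(\E_\delta)$ and isolating the $\G$-term, I can divide by the positive integer $m_{x,r}(\G,\E)$ to express $f(\G)$ as a $\underline\lambda$-linear combination of $f(\E)$ and of $f(\E_\delta)$ over the remaining neighbors $\E_\delta\ne\G$.

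To close the recursion I would introduce a total order on classes — say by $S$, refined reverse-lexicographically on the degree sequence $(d_n,\dots,d_1)$ to break ties — and verify, using the explicit multiplicities of Theorem \ref{deg1anyr}, that $\G$ is strictly the largest element of $\{\E\}\cup\{\E_\delta\}$ in this order. Granting this, every term on the right-hand side of the relation above lies strictly below $\G$, so by well-founded induction rooted at the minimum $\E_0$ the value $f(\G)$ is determined by $\underline\lambda$ and $f(\E_0)$. This yields $\dim_\C\cA(x,\underline\lambda)\le 1$. For the reverse inequality one checks that the recursion is consistent — equivalently, that the operators $\Phi_{x,1},\dots,\Phi_{x,n-1}$ commute — so that the assignment $f(\E_0):=1$ extends to a genuine simultaneous eigenform; hence $\dim_\C\cA(x,\underline\lambda)=1$.

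The main obstacle is the order-theoretic claim in the previous paragraph: among the neighbors of $\E$ of maximal $S$ (those whose $\delta$ lowers a twist of minimal degree) several distinct classes can share the same value of $S$, so $S$ alone does not single out $\G$. Establishing that the refined order makes $\G$ strictly maximal — and that every nontrivial class is reached exactly once as such a top term — is where the precise multiplicity formula and a careful combinatorial bookkeeping are needed. A secondary difficulty, of a different flavor, is the existence half ($\dim\ge 1$): turning the formal recursion into an honest eigenform requires the compatibility of the equations for different weights $r$, which I would deduce from commutativity of the Hecke operators at $x$ (or, in the special case $\underline\lambda=(\#\Gr(r,n)(\FF_q))_r$, simply from the constant function being an eigenform).
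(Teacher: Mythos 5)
Your determination argument follows the same strategy as the paper's proof: both rest on Theorem~\ref{deg1anyr} (the explicit description of degree-one neighbors, with multiplicities that are positive integers independent of $\underline{\lambda}$), and both solve for the value of $f$ at a ``raised'' bundle $\G$ from the eigenform equation of one well-chosen operator $\Phi_{x,n-t}$ evaluated at a lower bundle $\E$, anchoring the recursion at $\E_0$. The paper organizes this as a nested induction on the partition data $(1^{\ell},d_1,\ldots,d_t)$ (equations \eqref{aut}--\eqref{r=n-t-j}), raising all summands of positive degree at once, whereas you raise only the top-degree block and run a single well-founded induction on your order (degree sum first, then reverse-lexicographic); your bookkeeping is cleaner. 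Moreover, the ``main obstacle'' you flag is true and can be closed in a few lines: write $\G$ normalized with top degree $e_n$ of multiplicity $t$, and let $\E$ be obtained by lowering that block by one. In $\PBun_n\P^1$ every $\Phi_{x,n-t}$-neighbor of $\E$ is obtained by raising $t$ entries of $\E$ by one and renormalizing; all resulting entries are $\leq e_n$; the number of entries equal to $e_n$ is at most $t$; and if it equals $t$, then every raised low entry had value $e_n-1$, so the multiset of degrees coincides with that of $\G$. Hence any neighbor class other than $\G$ either has smaller normalized degree sum (when the raising touches all minimal entries) or strictly fewer copies of $e_n$, so it lies strictly below $\G$ in your order, as does $\E$ itself. (One terminological slip: the $\delta$ supported on the $r$ smallest-degree summands has $|\delta|=0$ and is the \emph{minimal} one in the paper's sense, not the maximal; but only the positivity of $m_{x,r}(\G,\E)$ is used, so nothing breaks.)

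The genuine gap is your existence step. Commutativity of $\Phi_{x,1},\ldots,\Phi_{x,n-1}$ is not equivalent to, and does not imply, consistency of the full system: your recursion uses exactly one equation per nontrivial class, and the remaining (overdetermining) equations are not formal consequences of commutativity --- a commuting family of operators on an infinite-dimensional space need not admit a joint eigenvector for every prescribed tuple of eigenvalues, so a priori the joint spectrum could be a proper subset of $\C^{n-1}$. To be fair, the paper's proof of this theorem also establishes only the determination statement, hence only $\dim_{\C}\cA(x,\underline{\lambda})\leq 1$; the lower bound is imported later, in the corollary on cusp forms, from the existence of Eisenstein series that are $\Phi_{x,r}$-eigenforms with arbitrary prescribed eigenvalues, citing \cite[Thm.~1.7.7]{valdir-thesis}. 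You should close your gap the same way (or prove that the finitely supported functions on $\PBun_n\P^1$ form a free rank-one module over $\C[\Phi_{x,1},\ldots,\Phi_{x,n-1}]$, which is what your consistency claim actually amounts to); the constant-function observation covers only the single tuple $\underline{\lambda}=(\#\Gr(r,n)(\FF_q))_{r}$.
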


\begin{proof} 
By Theorem $\ref{deg1anyr}$, $\E'$ is in the $\Phi_{x,r}$-neighbor of $\E$ if and only if there exists  $\delta \in \Delta_r^n$ that realizes the modification $[\E' \xrightarrow[r]{x} \E ]$.  Note that, in $\P\Bun_n (\P^1)$,  $\E' \cong \E' \otimes \cO(1)$. Hence,  $\E'$ a $\Phi_{n,r}$-neighbor of $\E$ if and only if $\E'$ correspond to add $1$ in the degree of $n-r$ line subbundles in the decomposition of $\E$. Thus, Theorem \ref{spacedmultip} yields
 \[\lambda_r f(\E_0)=q^{r(n-r)} f(\E_{(1^{n-r})}),\]
 and the theorem is true for any vector bundle isomorphic to a direct sum of line bundles of degree one and zero.

Let $t \in \Z_{> 1}$. Suppose the theorem holds for any projective vector bundle that can be written as a direct sum of $t-1$ line bundles of degree strictly greater than one and $n-t+1$ line bundles of degree zero or one. That is, the statement is true for every projective bundle in the form $\E_{(1^i, d_1, \ldots, d_{t-1})}$ with $i \in \{0, \ldots, n-t+1\}$. We will show that the same is true for every projective rank $n$ vector bundle on the form $\E_{(1^i, d_1, \ldots, d_{t})}$ with $i \in \{0, \ldots, n-t\}$.

Suppose that given positive integers $d_1\leq d_2 \leq  \cdots \leq d_t $,  the theorem is true for every projective bundle that can be written in the form $\E_{(1^{\ell}, d_1', \ldots, d_t')}$ with $d_i' \leq d_i$ for all $i\in \{1, \ldots, t\}$ and $0 \leq \ell <n-t$. 

The base case of induction is the projective bundle $\E_{(1^{\ell},2^t)}$. In this case, looking for the $\Phi_{x,n-1}$-neighbor of $\E_{(1^{\ell},2^{t-1})}$, there exist positive integers $a_1,a_2, a_3 \in \Z$ such that
\[\lambda_{n-1} f(\E_{(1^{\ell},2^{t-1})})=a_1f(\E_{(1^{\ell+1},2^{t-1})})+a_2f(\E_{(1^{\ell},2^{t-2},3)})+a_3f(\E_{(1^{\ell},2^{t})}). \]

We observe that, except by $\E_{(1^{\ell},2^t)}$, every vector bundle in the above equation is of the form $\E_{(1^i, d_1, \ldots, d_{t-1})}$ with $i \in \{0, \ldots, n-t+1\}$, thus the theorem is true in this terms. Hence, $\E_{(1^{\ell},2^t)}$ is determined by the values of $\lambda_1, \ldots, \lambda_{n-1}$ and $f(\E_0)$.

Fix $r \in \{1, \ldots, n-t\}$, the $\Phi_{x,r}$-neighbors of $\E_{(d_1, \ldots, d_t)}$ are given as follows,
\begin{equation}\label{aut}
    \begin{aligned}
        \lambda_r f(\E_{(d_1, \ldots , d_t)})= & \sum_{\sigma \in \Delta_{n-r}^n} a_{r\sigma}f(\E_{(\sigma(d_1, \ldots, d_t))})\\
        =& \sum_{j=0}^{\min\{t,r\}} \sum_{\sigma \in \Delta_{t-j}^t} a_{rj\sigma} f(\E_{(1^{n-t-r+j},\sigma(d_1, \ldots, d_t))})
    \end{aligned}
\end{equation}
for some $a_{r\sigma},a_{rj\sigma} \in \Z_{> 0}$.
The above equation reflects that an element on the $\Phi_x^r$- neighbor of $\E_{(d_1, \ldots, d_t)}$ correspond to increase the degree of $n-r$ line subbundles by one. This is done  by increasing the degree of $n-t-r+j$ line subbundles of degree $0$ and, by increasing the degree of $t-j$ line subbundles of degree greater than zero. Note that two vector bundles in the equation \eqref{aut} can be equal. This happens, for instance, if $d_1=d_2 \neq d_i$, for all $i>2$ and $\sigma_1=(1, 0 \ldots, 0), \ \sigma_2=(0,1,0, \ldots, 0)$. In this case, $$a_{r1\sigma_1}=a_{r1\sigma_2}=\dfrac{m_{x,n-1}(\E_{(d_1, \ldots, d_t)}, \E_{(d_1+1, d_2, \ldots, d_t)})}{2}.$$ 

Let $\hat{\sigma}$ be the unique element of $\Delta_t^t$ and $r=n-t$ in  identity $\eqref{aut}$. Then 
\begin{equation}\label{r=n-t}
    a_{r0\hat{\sigma}}f(\E_{(\hat{\sigma}(d_1, \ldots , d_t))}) = \sum_{j=1}^{\min\{t, n-t\}} \sum_{\sigma \in \Delta_{t-j}^t} a_{rj\sigma} f(\E_{(1^j,\sigma(d_1, \ldots, d_t))}) - \lambda_{n-t} f(\E_{(d_1, \ldots , d_t)}).
\end{equation}

We are left to show that all terms on the right side of equation $\eqref{r=n-t}$ are determined by $\lambda_1, \ldots, \lambda_{n-1}$ and $f(\E_0)$. By hypothesis this is true for $f(\E_{(d_1, \ldots, d_t)})$. 

Let $j_0 \in \{1, \ldots, \min\{t,n-t\}\}$ and $\Tilde{\sigma} \in \Delta_{t-j_0}^t$. The next step is to prove the theorem for $f(\E_{\rho_{j_0 \Tilde{\sigma}}})$, where $\rho_{j_0\Tilde{\sigma}}:= (1^{j_0},\Tilde{\sigma}(d_1, \ldots, d_t))$.
If $j_0=n-t$, then 
\[ \E_{(1^{j_0},\Tilde{\sigma}(d_1, \ldots, d_t))}=\E_{(1^{j_0},\Tilde{\sigma}(d_1, \ldots, d_t))}\otimes \cO(-1)=\E_{(\Tilde{\sigma}(d_1-1, \ldots, d_t-1))}.\] 
Since $d_i-1+\Tilde{\sigma}(i) \leq d_i$, for all $i \in \{1, \ldots, t\}$, the theorem holds in this terms by hypothesis.
If $j_0\leq n-t$,  let $r = n-t-j_0$ in the identity \eqref{aut}. Thus

\begin{equation}\label{n-t-j_0}
        a_{r0\hat{\sigma}}f(\E_{(1^{j_0}\hat{\sigma}(d_1, \ldots , d_t))}) = \sum_{j=1}^{\min\{t, n-t-j_0\}} \sum_{\sigma \in \Delta_{t-j}^t} a_{rj\sigma} f(\E_{\rho_{rj\sigma}}) - \lambda_{n-t-j_0} f(\E_{(d_1, \ldots , d_t)}),
\end{equation}
where $\rho_{rj\sigma}=(1^{j+j_0},\sigma(d_1, \ldots d_t))$. Note that 
\[a_{r0 \Tilde{\sigma}} \; f(\E_{\rho_{j_0 \Tilde{\sigma}}}) =a_{r0 \Tilde{\sigma}} \; f(\E_{(1^{j_0},\hat{\sigma}(\Tilde{\sigma}(d_1-1, \ldots , d_t-1)))}).\]
Then, replacing $\E_{(d_1, \ldots, d_t)}$ by $\E_{\Tilde{\sigma}(d_1-1, \ldots, d_t-1)}$ in equation \eqref{n-t-j_0} we obtain:         
\begin{equation}\label{r=n-t-j}
       a_{r0\Tilde{\sigma}} \; f(\E_{\rho_{j_0 \Tilde{\sigma}}})=\sum_{j=1}^{\min\{t,n-t-j_0\}} \sum_{\sigma \in \Delta_{t-j}^{t}} b_{rj\sigma \Tilde{\sigma}} f(\E_{\rho_{rj\sigma\Tilde{\sigma}}})  - \lambda_{n-t} f(\E_{(\Tilde{\sigma}(d_1-1, \ldots , d_t-1))}),
\end{equation}
where $\rho_{rj\sigma\Tilde{\sigma}} := ((1^{j+j_0},\sigma(\Tilde{\sigma}(d_1-1, \ldots, d_t-1))))$ and $b_{ri\sigma \Tilde{\sigma}} \in \Z \setminus \{0\}$.

Note that if $\Tilde{\sigma}(\sigma(d_\ell-1))=1$, for some index $\ell \in \{1, \ldots , t\}$ and some $\sigma \in \Delta_{t-\ell}^t$, then  $\E_{(1^{j_0+\ell},\sigma(\Tilde{\sigma}(d_1-1, \ldots, d_t-1)))}$ can be represented as $\E_{(1^{j_0+\ell+1},d_1', \ldots, d_{t-1}')}$, and by hypothesis the theorem holds for this vector bundle.

Furthermore, if $j=n-t-j_0$, then \[\E_{(1^{j+j_0},\sigma(\Tilde{\sigma}(d_1-1, \ldots, d_t-1)))} \cong \E_{(1^{n-t},\sigma(\Tilde{\sigma}(d_1-1, \ldots, d_t-1)))}\otimes \cO(-1)  \cong \E_{(\sigma(\Tilde{\sigma}(d_1-2, \ldots, d_t-2)))}.\]
Hence, for all $\ell \in \{1, \ldots, t\}$, we obtain $\sigma(\Tilde{\sigma}(d_{\ell}-2))\leq d_{\ell}$. Thus the theorem holds in this case.

Next, we might proceed recursively in each term of the right side of equation $\eqref{r=n-t-j}$. The recursion eventually ends,  since if $\sigma_1(\sigma_2( \ldots \sigma_s(d_{\ell}-s)))$ is not smaller than $d_{\ell}$ for all $\ell \in \{1, \ldots, t\}$, the power $i+j_1+ \ldots +j_s$ eventually become $n-t$. Then 
\[ \sigma_1(\sigma_2( \ldots \sigma_s(d_{\ell}-s-1)))\]
is smaller than $d_{\ell}$, for all $\ell \in \{1, \ldots, t\}$. 

Given $j \in \{1, \ldots, t-1\}$ and $\sigma \in \Delta_{t-j}^t$, by a changing of variables in the above process, i.e., 
\[ (d_1, \ldots, d_t) \mapsto \sigma(d_1-1, \ldots, d_t-1)\]
the theorem holds for every vector bundle $\E_{\overline{\sigma} (d_1, \ldots, d_t)}$, where 
$\overline{\sigma}(i)=1-\sigma(i)$. 
Hence, the theorem holds for every vector bundle on the form $\E_{\hat{d_1}, \ldots, \hat{d_t}}$, with $\hat{d_1}, \ldots, \hat{d_t} \in \Z_{>1}$.

Finally, to prove that the theorem holds for a vector bundle of the form $\E_{(1)^{\ell},\hat{d_1}, \ldots \hat{d_t}}$ with $\ell \in \{1, \ldots, n-r-1\}$,  we just need to consider in equation $\eqref{r=n-t}$, the case $r=n-t-\ell$ and $j=0$ in the variables $(\hat{d_1}-1, \ldots, \hat{d_t}-1)$. Since we proved that the theorem holds for every term of the equation \eqref{r=n-t} and for all $r \in \{1, \ldots, n-t\}$, it is also holds for $\E_{(1^{\ell},\hat{d_1}, \ldots \hat{d_t})}$. This completes the proof.
\end{proof}


\begin{df}
    An unramified automorphic form $f \in \cA$ is a \textit{cusp form} if for any  $r , s \in \Z_{ > 0}$ with $r + s = n$ and any vector bundles $\F \in \Bun_{r} \P^1$, $\G \in \Bun_{s}\P^1$, 
\begin{equation}\label{cuspeq}
 \sum_{\E \in \Ext(\F,\G)} f(\E) =0,
\end{equation}
where we abuse the notation and write $\E$ meant the middle term of the correspondent exact sequence. We denote the space of unramified cusp forms by $\mathcal{A}_{0}$.
\end{df}


\begin{cor} For every $n \in \N$, there are no unramified cusp forms for $\PGL_n$ over the projective line. 
\end{cor}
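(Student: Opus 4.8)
The plan is to reduce the statement to Theorem \ref{thm-dim1eigenforms} by passing to a Hecke eigenform, and then to exhibit a single cuspidality relation that forces such an eigenform to vanish. Throughout I assume $n \geq 2$, the case $n=1$ carrying no proper parabolic and hence being degenerate. Let $\cA_0 \subseteq \cA$ denote the space of unramified cusp forms. I would first record two standard facts: that $\cA_0$ is finite-dimensional (this is the function-field analogue of Harder's finiteness theorem; over $\P^1$ it follows from reduction theory, since the vanishing of all constant terms confines a cusp form to a finite set of classes in $\PBun_n(\P^1)$, see e.g.\ \cite{lorscheid-thesis}), and that $\cA_0$ is stable under every Hecke operator $\Phi_{x,r}$, since these commute with the constant-term operators and therefore preserve the cuspidality condition \eqref{cuspeq}. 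Fixing a closed point $x \in \P^1$ of degree one, the operators $\Phi_{x,1}, \ldots, \Phi_{x,n-1}$ then form a commuting family of endomorphisms of the finite-dimensional complex vector space $\cA_0$.

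Suppose for contradiction that $\cA_0 \neq 0$. A commuting family of operators on a nonzero finite-dimensional vector space over the algebraically closed field $\C$ admits a common eigenvector, so there is some $0 \neq f \in \cA_0$ with $\Phi_{x,i}(f) = \lambda_i f$ for $i = 1, \ldots, n-1$; that is, $f \in \cA(x,\underline{\lambda})$ with $\underline{\lambda} = (\lambda_1, \ldots, \lambda_{n-1})$. By Theorem \ref{thm-dim1eigenforms}, the values of $f$ are determined by $\underline{\lambda}$ together with $f(\E_0)$; in particular, if $f(\E_0) = 0$ then $f \equiv 0$. Since $f \neq 0$, we must have $f(\E_0) \neq 0$.

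It remains to contradict this using cuspidality. I would apply the defining relation \eqref{cuspeq} with $r = 1$, $s = n-1$, $\F = \cO$ and $\G = \cO^{\oplus (n-1)}$. As $\Ext^1(\cO, \cO) = H^1(\P^1, \cO) = 0$, the only extension of $\F$ by $\G$ is the split one, whose middle term is $\cO^{\oplus n} = \E_0$; hence the sum in \eqref{cuspeq} collapses to the single term $f(\E_0)$, forcing $f(\E_0) = 0$. This contradicts the previous paragraph, so $\cA_0 = 0$. The cuspidality relation and the Hecke-stability of $\cA_0$ are routine; the one genuinely external input, and hence the main obstacle, is the finiteness of $\dim_\C \cA_0$, which guarantees the common Hecke eigenvector that lets Theorem \ref{thm-dim1eigenforms} apply.
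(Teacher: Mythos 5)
Your proof is correct, and it diverges from the paper's at the decisive step. Both arguments share the same skeleton: reduce to a Hecke eigenform and invoke Theorem \ref{thm-dim1eigenforms}. But where you need to show that a nonzero cuspidal eigenform cannot exist, the paper cites external results on Eisenstein series (\cite[Thms. 1.7.7 and 1.7.9]{valdir-thesis}): for every $\underline{\lambda}$ there is a nontrivial, non-cuspidal Eisenstein series in $\cA(x,\underline{\lambda})$, so the one-dimensional space $\cA(x,\underline{\lambda})$ meets $\cA_0$ trivially. You instead kill the eigenform intrinsically, by feeding $\F=\cO$, $\G=\cO^{\oplus(n-1)}$ into the cuspidality relation \eqref{cuspeq}: since $\Ext^1(\cO,\cO^{\oplus(n-1)})=H^1(\P^1,\cO)^{\oplus(n-1)}=0$, the sum collapses to $f(\E_0)=0$, contradicting the nonvanishing at $\E_0$ forced by Theorem \ref{thm-dim1eigenforms}. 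This is more elementary and self-contained, using only the geometry of $\P^1$ plus the paper's own eigenform theorem; what it costs you is a different external input, namely Harder's finiteness theorem for $\cA_0$ together with commutativity of the spherical Hecke algebra to produce a common eigenvector (the paper's analogous input is the decomposition of $\cA_0$ into irreducible admissible representations, so the two proofs are on comparable footing here). Two remarks. First, your justification that $\cA_0$ is Hecke-stable is phrased loosely: the Hecke operators do not literally commute with constant-term operators; rather, the constant term of $\Phi_{x,r}f$ is a combination of Hecke operators on the Levi applied to constant terms of $f$, which is what preserves cuspidality --- the fact is standard, but the wording should be fixed. Second, your $\Ext$-vanishing trick actually proves more than you use: for an arbitrary $\E=\bigoplus_{i=1}^n\cO(d_i)$ with $d_1\leq\cdots\leq d_n$, taking $\F=\cO(d_1)$ and $\G=\bigoplus_{i\geq 2}\cO(d_i)$ gives $\Ext^1(\F,\G)=\bigoplus_{i\geq 2}H^1(\P^1,\cO(d_i-d_1))=0$ because $d_i-d_1\geq 0$, so \eqref{cuspeq} collapses to $f(\E)=0$ at \emph{every} bundle; this yields the corollary with no Hecke theory, no finiteness theorem, and no eigenforms at all.
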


\begin{proof} Let $T$ be the diagonal torus of $\GL_n$, $\underline{\lambda} =( \lambda_1 , \ldots , \lambda_{n-1}) \in \C^{n-1}$ and $x \in \P^1$ be a closed point of degree one. 
By \cite[Thm. 1.7.7]{valdir-thesis}, there exists a nontrivial Eisenstein series induced from a unramified character of $T$, which is an eigenfunction for $\Phi_{x,r}$ ($r=1, \ldots, n-1$) with eigenvalues $\lambda_1 , \ldots , \lambda_{n-1}$. 
Hence,  \cite[Thm. 1.7.9]{valdir-thesis} and above theorem yield 
\begin{equation} \label{eq-tr}
\mathcal{A}_{0} \cap   \mathcal{A}(x, \underline{\lambda}) = \{0\}.
\end{equation}
Since $\mathcal{A}_{0}$ splits as a direct sum of irreducible representations,  we can write every $f \in \mathcal{A}_{0}$ as a sum of eigenforms. Therefore, $f=0$ by above (\ref{eq-tr}). \end{proof}


In the following, we use the adelic interpretation of an unramified automorphic form i.e., as a complex valued function 
\[ f: \GL_n(F) Z(\mathbb{A}) \setminus \GL_n(\mathbb{A}) / \GL_n(\cO_{\A})  \longrightarrow \C\]
with some moderate growth condition. We observe that the aforementioned theorem due to Weil establishes, for every $n \geq 1$, a bijection 
\[ \GL_n(F) Z(\mathbb{A}) \setminus \GL_n(\mathbb{A}) / \GL_n(\cO_{\A})  \longleftrightarrow  \P\Bun_n(X) \]
where $Z(\mathbb{A})$ is the center of $ GL_n(\mathbb{A}).$

Let $E/F$ be a separable field extension of degree $n$ and $\mathbb{A}_{E}$ be its adelic ring. 
Choosing a basis for $E$ over $F$ gives an embedding of $E^{*}$ in $\GL_n(F)$ and a non-split maximal torus $T \subseteq \GL_n$ with $T(F)= E^{*}$ and $T(\mathbb{A}) = \mathbb{A}_{E}^{*}$. In this case, we say that $T$ is associated to $E/F$. We refer \cite[Def. 1.5.1]{lorscheid-thesis} to the definitions of non-split and maximal torus. 

\begin{df} Let $T$ be a maximal torus of $\GL_n$ over $F$ associated with a separable extension $E/F$ of degree $n$. Endow $T(\mathbb{A})$ and $T(F)Z(\mathbb{A}) $ with the Haar measures  and $T(F)Z(\mathbb{A})\setminus T(\mathbb{A})$ with the quotient measure. For $f \in \mathcal{A}$, we define 
\[f_T(g) := \int_{T(F)Z(\mathbb{A})\setminus T(\mathbb{A})} f(tg)dt\]
the \textit{toroidal integral} of $f$ along $T$.
\end{df}

\begin{rem} The quotient $T(F)Z(\mathbb{A})\setminus T(\mathbb{A}) $ is compact, see \cite[Pag. 42]{valdir-thesis}. \end{rem}

\begin{df} \label{def-toroidal} Let $T$ be a maximal torus of $\GL_n$ over $F$ associated with a separable extension $E/F$ of degree $n$. We define
\[\mathcal{A}_{tor}(E) := \big\{f \in \mathcal{A} \,\big|\, f_T(g)=0, \ \text{for all} \ g \in \GL_n(\mathbb{A})\big\}\]
the space of $E$\textit{-toroidal automorphic forms}, and
\[\mathcal{A}_{tor} = \bigcap_{E/F} \mathcal{A}_{tor}(E)\]
the space of \textit{toroidal automorphic forms}, where $E/F$ runs over the separable extensions of degree $n$. 
\end{df}

\begin{rem} The spaces $\mathcal{A}_{tor}(E)$ do not depend on the choice of the basis for $E$ over $F$, see \cite[Rem. 2.1.3]{valdir-thesis}. \end{rem}

\begin{thm}\label{thm-5.5} Let  $x \in \P^1$ be a closed point of degree one  and  $E$ be the constant field extension of $F$ of degree $n$, i.e.\ $E=\mathbb{F}_{q^n}F$, for $n \geq 2$.  Let $ \underline{\lambda} =( \lambda_1 , \ldots , \lambda_{n-1}) \in \C^{n-1}$. Then
\[\mathcal{A}_{tor}(E) \cap \mathcal{A}(x,\underline{\lambda}) = \{0\}\]
i.e., there do not exist any nontrivial toroidal $\Phi_{x,r}$-eigenforms for $r=1, \ldots, n-1.$ 
\end{thm}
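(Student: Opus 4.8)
The plan is to reduce the statement, via the one-dimensionality of $\mathcal{A}(x,\underline{\lambda})$ proved in Theorem \ref{thm-dim1eigenforms}, to a single evaluation of the toroidal integral. Let $f \in \mathcal{A}_{tor}(E)\cap\mathcal{A}(x,\underline{\lambda})$. By Theorem \ref{thm-dim1eigenforms} every value $f(\E)$ is determined by $\underline{\lambda}$ and by $f(\E_0)$, so it suffices to show $f(\E_0)=0$; this will force $f\equiv 0$. Thus the whole proof comes down to computing $f_T(g)$ for the single element $g=1$.

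First I would make the torus geometric. Since $E=\mathbb{F}_{q^n}F$ is the constant field extension, $E$ is the function field of $Y:=\P^1_{\mathbb{F}_{q^n}}$, and the associated maximal torus is $T=\mathrm{Res}_{E/F}\mathbb{G}_m$, so that $T(\mathbb{A})=\mathbb{A}_E^{*}$ and $T(F)Z(\mathbb{A})=E^{*}\mathbb{A}_F^{*}$. Let $\pi\colon Y\to X=\P^1$ be the degree-$n$ covering obtained by extension of constants; it is finite flat and everywhere unramified, with $\pi_{*}\cO_Y\cong\cO_X^{\oplus n}=\E_0$. Choosing a basis of $E$ over $F$ adapted to this isomorphism gives an embedding $E^{*}\hookrightarrow\GL_n(F)$ for which $T(\mathbb{A})\cap\GL_n(\cO_\mathbb{A})=\cO_{\mathbb{A}_E}^{*}$, and, under Weil's bijection $\GL_n(F)Z(\mathbb{A})\setminus\GL_n(\mathbb{A})/\GL_n(\cO_\mathbb{A})\leftrightarrow\PBun_n(X)$, the class of $t\in T(\mathbb{A})$ is sent to the pushforward $\pi_{*}\mathcal{L}_t$, where $\mathcal{L}_t\in\Pic(Y)$ is the line bundle attached to $t$.

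The key observation is that the entire torus collapses to one point of $\PBun_n(X)$. Indeed, by Theorem \ref{birgro} every line bundle on $Y$ is $\cO_Y(d)=\pi^{*}\cO_X(d)$ for some $d\in\Z$, so by the projection formula
\[
\pi_{*}\mathcal{L}_t=\pi_{*}\bigl(\pi^{*}\cO_X(d)\bigr)=\cO_X(d)\otimes\pi_{*}\cO_Y\cong\cO_X(d)^{\oplus n}\cong\E_0
\]
in $\PBun_n(X)$, since tensoring by a line bundle is trivial on projective bundles. Hence the image of $T(\mathbb{A})$ in $\PBun_n(X)$ is the single class $\E_0=\pi_{*}\cO_Y$. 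Because $f$ is right $\GL_n(\cO_\mathbb{A})$-invariant with $T(\cO_\mathbb{A})\subseteq\GL_n(\cO_\mathbb{A})$, and left $T(F)Z(\mathbb{A})$-invariant, the integrand $t\mapsto f(t)$ descends to the finite double quotient $E^{*}\mathbb{A}_F^{*}\cO_{\mathbb{A}_E}^{*}\setminus\mathbb{A}_E^{*}$ and there takes the constant value $f(\E_0)$. Therefore
\[
f_T(1)=\int_{T(F)Z(\mathbb{A})\setminus T(\mathbb{A})}f(t)\,dt=\mathrm{vol}\bigl(T(F)Z(\mathbb{A})\setminus T(\mathbb{A})\bigr)\,f(\E_0),
\]
and the volume is positive as the quotient is compact and nonempty. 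Since $f$ is $E$-toroidal, $f_T(1)=0$, whence $f(\E_0)=0$ and so $f\equiv 0$.

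I expect the main obstacle to be the bookkeeping in the second paragraph: one must check that the chosen basis really makes $T(\mathbb{A})\cap\GL_n(\cO_\mathbb{A})=\cO_{\mathbb{A}_E}^{*}$ (this uses that the constant extension is unramified at every place) and that Weil's dictionary sends $t$ to $\pi_{*}\mathcal{L}_t$. Once these identifications are secured, the collapse $\pi_{*}\cO_Y(d)\cong\E_0$ and the positivity of the volume make the conclusion immediate, and the eigenform input of Theorem \ref{thm-dim1eigenforms} does the rest.
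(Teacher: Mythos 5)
Your proof is correct and follows essentially the same route as the paper: both arguments reduce, via Theorem \ref{thm-dim1eigenforms}, to showing $f(\E_0)=0$, and both obtain this by evaluating the toroidal integral over the constant-extension torus and observing that every line bundle on $\P^1_{\FF_{q^n}}$ pushes forward to the class of $\E_0$ in $\PBun_n(\P^1)$, so that toroidality kills $f(\E_0)$. The only difference is presentational: the paper invokes a general unfolding formula for the toroidal integral (citing Thm.~2.8.1 of the thesis referenced in its proof), which expresses $f_T$ as $c_T\sum_{[\Line]\in\Pic(\P^1_n)/p^*\Pic(\P^1)}f(p_*\Line)$, whereas you re-derive exactly this identity at $g=1$ from Weil's dictionary and the projection formula --- a self-contained inlining of the same fact, which is valid provided the compatibility checks you flag (the adapted basis and the unramifiedness of the constant extension) are carried out.
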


\begin{proof}Let $T$ be the $n$-dimensional torus associated to $E/F$, where $E = \mathbb{F}_{q^n}F.$ Thus $E$ is the function field of $\mathbb{P}_{n}^{1} := \mathbb{P}^1 \otimes_{\mathrm{Spec}\Fq} \mathrm{Spec} \mathbb{F}_{q^n}$, the $n$-th extension of scalars of $\mathbb{P}^1$. The extension of scalars yields 
\[p: \mathbb{P}_{n}^{1} \rightarrow \P^1\]
the projection map. 
Moreover, $p$ induces the inverse image $p^{*} : \Bun_n \P^1 \rightarrow \Bun_n \mathbb{P}_{n}^{1}$ and the direct image (or trace) $p_{*}: \Bun_1 \mathbb{P}_{n}^{1} \rightarrow \Bun_n \mathbb{P}^{1}$. 
From \cite[Thm. 2.8.1]{valdir-thesis},
\[ \int_{T(F)Z(\mathbb{A})\setminus T(\mathbb{A})} f(tg)\mu(t) = c_T \cdot \sum_{[\Line] \in \mathrm{Pic} \mathbb{P}_{n}^{1}/p^{*}\mathrm{Pic}\mathbb{P}^{1}} f(p_{*}\Line)\]
where  
$$c_T = \frac{vol(T(F)Z(\A)\setminus T(\A))}{\# (\Pic(\mathbb{P}_{n}^{1})/p^*\Pic (\P^1))}.$$ 
Hence, if $f \in \mathcal{A}(x, \underline{\lambda})$ is $E$-toroidal, thus  
$f(\E_0) =0$. 
Therefore, Theorem \ref{thm-dim1eigenforms} yields $f(\E_0)=0$ i.e.\ $f$ is trivial.
\end{proof}

We end this article with a proof that the space of toroidal automorphic forms is trivial over $\P^1$ for $\PGL_n$, for every $n \geq 2$.  

\begin{thm} \label{thm-toroidal} Let $F$ be the function field of $\P^1$ over $\Fq$. Let $E$ be the constant field extension of $F$ of degree $n$, i.e.\ $E=\mathbb{F}_{q^n}F$, for $n \geq 2$. Then,
$\mathcal{A}_{tor}(E)  = \{0\} $ and, therefore, 
$\mathcal{A}_{tor} = \{0\}$, for every $n \geq 2$.
\end{thm}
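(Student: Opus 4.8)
The plan is to reduce everything to the constant field extension $E = \mathbb{F}_{q^n}F$ and to leverage the eigenform result of Theorem \ref{thm-5.5}. Since $\mathcal{A}_{tor} = \bigcap_{E'/F} \mathcal{A}_{tor}(E') \subseteq \mathcal{A}_{tor}(E)$, where the intersection runs over all separable degree-$n$ extensions and $E$ is one of them, it suffices to prove $\mathcal{A}_{tor}(E) = \{0\}$ for this single $E$; the second assertion then follows immediately.

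First I would record that $\mathcal{A}_{tor}(E)$ is stable under every Hecke operator $\Phi_{x,r}$. Indeed, the toroidal integral $f \mapsto f_T$ is an integration over the \emph{left} translates $t \in T(\mathbb{A})$, whereas $\Phi_{x,r}$ acts by \emph{right} convolution on $\GL_n(\mathbb{A})$; since the left and right regular actions commute, one gets $(\Phi_{x,r}f)_T = \Phi_{x,r}(f_T)$, and hence $f_T \equiv 0$ forces $(\Phi_{x,r}f)_T \equiv 0$. Thus $\mathcal{A}_{tor}(E)$ is a Hecke-stable subspace of $\mathcal{A}$.

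Next I would invoke the spectral input. By Remark \ref{rem-langlands}, the space $\mathcal{A}$ admits a basis of simultaneous eigenforms for the spherical Hecke algebra; restricting the eigencharacters to the operators $\Phi_{x,1}, \ldots, \Phi_{x,n-1}$ at the fixed degree-one point $x$, each basis eigenform lies in some eigenspace $\mathcal{A}(x,\underline{\lambda})$. By Theorem \ref{thm-dim1eigenforms} these eigenspaces are one-dimensional, so two distinct basis eigenforms cannot share the same $\underline{\lambda}$, and therefore $\mathcal{A} = \bigoplus_{\underline{\lambda}} \mathcal{A}(x,\underline{\lambda})$ with the $\Phi_{x,i}$ acting semisimply and with pairwise distinct joint eigencharacters.

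To conclude, take $f \in \mathcal{A}_{tor}(E)$ and write $f = \sum_{i=1}^{k} f_i$ with $f_i \in \mathcal{A}(x,\underline{\lambda}_i)$ and the $\underline{\lambda}_i$ pairwise distinct. Because the joint eigencharacters are distinct, Lagrange interpolation produces, for each $i$, a polynomial $P_i$ in the commuting operators $\Phi_{x,1}, \ldots, \Phi_{x,n-1}$ with $P_i(f) = f_i$; Hecke-stability of $\mathcal{A}_{tor}(E)$ then gives $f_i \in \mathcal{A}_{tor}(E)$. Each $f_i$ is a $\Phi_{x,\bullet}$-eigenform, so $f_i \in \mathcal{A}_{tor}(E) \cap \mathcal{A}(x,\underline{\lambda}_i) = \{0\}$ by Theorem \ref{thm-5.5}. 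Hence $f = 0$, proving $\mathcal{A}_{tor}(E) = \{0\}$ and therefore $\mathcal{A}_{tor} = \{0\}$ for every $n \geq 2$. The main obstacle is not the final interpolation, which is routine once semisimplicity and distinctness are available, but rather the two structural inputs that precede it: verifying the commutation of the (left) toroidal integral with the (right) Hecke action, and---more essentially---the global spectral fact from Remark \ref{rem-langlands} that $\mathcal{A}$ is spanned by Hecke eigenforms, which is what upgrades the single-eigenspace vanishing of Theorem \ref{thm-5.5} to the vanishing of the entire toroidal space.
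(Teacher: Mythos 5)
Your first step (Hecke-stability of $\mathcal{A}_{tor}(E)$, via commutation of the left toroidal integral with right convolution) is correct, and it is genuinely needed --- the paper's own proof uses this fact only implicitly. The closing interpolation argument would also be fine \emph{if} your spectral premise held. The genuine gap is that premise: you take from Remark \ref{rem-langlands} that $\mathcal{A}$ admits a basis of simultaneous Hecke eigenforms, and hence that $\mathcal{A}=\bigoplus_{\underline{\lambda}}\mathcal{A}(x,\underline{\lambda})$ with the $\Phi_{x,i}$ acting semisimply. That remark is expository motivation, not an ingredient the paper proves or uses: the authors say explicitly that their proofs ``rely solely on the theory of Hecke modifications,'' and the theorems of Drinfeld and Lafforgue cited there concern \emph{cuspidal} automorphic representations. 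For the full space of unramified automorphic forms over a function field the Hecke action is in general \emph{not} semisimple: the space contains derivatives of Eisenstein series, which are Hecke-finite (hence automorphic forms in the sense used here, where admissibility/Hecke-finiteness is part of the definition) and are joint \emph{generalized} eigenvectors without being linear combinations of eigenforms; this is documented in the very sources the paper cites for the $n=2$ case (Lorscheid's work). Over $\P^1$ this Eisenstein part is all there is, since the cuspidal part vanishes. So the decomposition $f=\sum_i f_i$ into genuine eigencomponents is not available, and Lagrange interpolation in the commuting operators only projects onto generalized eigenspaces --- producing generalized eigenvectors, to which Theorem \ref{thm-5.5} does not apply.

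The paper's proof is engineered precisely to avoid this, and it also shows how to repair your argument with what you already have. Given $0\neq f\in\mathcal{A}_{tor}(E)$, consider the Hecke module $\mathcal{H}\cdot f$ it generates. By admissibility this space is \emph{finite-dimensional}; it is contained in $\mathcal{A}_{tor}(E)$ by your stability step; and it is invariant under the commuting operators $\Phi_{x,1},\ldots,\Phi_{x,n-1}$. A nonzero finite-dimensional complex vector space invariant under a commuting family of operators always contains a nonzero common eigenvector --- no semisimplicity of the global space is required. That eigenvector is then a nonzero toroidal $\Phi_{x,r}$-eigenform, contradicting Theorem \ref{thm-5.5}. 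In short: replace the global eigenform basis by the local finite-dimensionality of $\mathcal{H}\cdot f$, keep your commutation lemma, and the argument closes.
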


\begin{proof} Assume by contradiction that $\mathcal{A}_{tor}(E) \cap \mathcal{A} \neq \{0\}.$ Hence, let $f \in \mathcal{A}_{tor}(E) \cap \mathcal{A}^K$ be such that $f \neq 0.$ Let 
\[ \mathcal{H} \cdot f := \{ \Phi_{\mathcal{T}} (f) \;|\; \mathcal{T} \in \mathrm{Tor}(\P^1)\}. \]
By admissibility condition, $\mathcal{H} \cdot f$ is a finite dimensional vector space and invariant by the action of $\Phi_{x,r}$ for $r = 1, \ldots, n-1$ and for some $x \in \mathbb{P}^1$ closed point of degree one. Thus, there exists a $\Phi_{x,r}$-eigenform  in $\mathcal{H} \cdot f$, which disagrees with Theorem \ref{thm-5.5}. 
\end{proof}


\section*{Acknowledgments} 
This work was financed, in part, by the São Paulo Research Foundation (FAPESP), Brazil. Process Number 2022/09476-7. The authors would like to express their sincere gratitude to Edward Frenkel, for his careful and detailed explanation of how the triviality of the space of Hecke eigenforms follows from the geometric Langlands correspondence.

\bibliographystyle{alpha}
\bibliography{Hall_alg_and_Hecke_mod} 

\end{document}